\theoremstyle{definition}
\theoremstyle{plain}
\newtheorem{theorem}{Theorem}
\newtheorem*{thm*}{Theorem}
\newtheorem{prop}{Proposition}
\newtheorem*{prop*}{Proposition}
\newtheorem{corollary}{Corollary}
\newtheorem{lemma}{Lemma}
\newcommand{\const}{\operatorname{const}}
\begin{document}

	\title{Clebsh-Gordan coefficients for the algebra  $\mathfrak{gl}_3$ and hypergeometric functions \footnote{   Это препринт Произведения, принятого для публикации в журнале «Алгебра и Анализ», 2021 год. Владелец прав на распространение – ПОМИ РАН.  This is a preprint of a paper submitted to "St. Petersburg Mathematical Journal". All rights belong  to  POMI RAS  }}
	
	\author{D.~V.~Artamonov}

\maketitle

\begin{abstract}
	The Clebsh-Gordan coefficients for the Lie algebra
	$\mathfrak{gl}_3$ in the Gelfand-Tsetlin base are  calculated. In contrast to previous papers the result is given as an explicit formula.   To obtain the result a realization of a representation in the space of functions on the group  $GL_3$ is used. The keystone fact that allows  to carry the calculation of Clebsh-Gordan coefficients is  the theorem that says that functions  corresponding to Gelfand-Tsetlin base vectors can be expressed through  generalized hypergeometric functions.
\end{abstract}

\section{Introduction}

Let  $U$, $V$ --- be finite dimensional irreducible representation of the Lie algebra~$\mathfrak{gl}_N$. Let us take their tensor product and split it into a sum of irreducible representations
\begin{equation}\label{rcg} U\otimes V=\sum_{\alpha} W^{\alpha}.
\end{equation}

Let $\{u_i\}$, $\{v_j\}$  be bases in  $U$, $V$,  and let
$\{w_k^{\alpha}\}$ be a base in
$W^{\alpha}$.
One has an relation
\begin{equation}
\label{Clgo}
w_k^{\alpha}=\sum_{i,j}C^{i,j}_k(\alpha)u_i\otimes v_j,\quad  C^{i,j}_k(\alpha)\in\mathbb{C}.
\end{equation}

The coefficietns $C^{i,j}_k(\alpha)$  in this relation are called {\it  the Clebsh-Gordan coefficients}.

Below we discuss only the cases $N=2,3$.  In the representations we take a  Gelfand-Tsetlin base, since this type of base appeares naturally in applications that are discussed below.

The Clebsh-Gordan coefficients for $\mathfrak{gl}_2$  play an important role in the quantum mechanics  in the theory of spin. These coefficients were calculated explicitly by van der Waerden and Racah  (see \cite{Gkl0,Gkl}).

The Clebsh-Gordan coefficients  for the algebra  $\mathfrak{gl}_3$  play an important role in the theory of quarks (see  \cite{qq}).
But the problem of calculation of the Clebsh-Gordan coefficients in the case  of the algebra  $\mathfrak{gl}_3$  is a much more difficult problem than in the case
$\mathfrak{gl}_2$.   Nevertheless in some sence the  formulas were obtained.  Firstly it was done by Biedenharn, Baird, Louck   and others in the series of papers~\cite{bb1963,bl1968,bl1970,bl19731,bl19732}\footnote{Mostly in these papers the case  $\mathfrak{gl}_N$ is considered, but  only in the case $\mathfrak{gl}_3$ the obtained results allow to obtain in principle a  formula for an arbitrary  Clebsh-Gordan coefficient.}.
But the obtained results are  bulky. Also the papers do not contain a direct formula   of type  $C^{i,j}_k(\alpha)=\dots$\,. It is only clear that such a formula can in principle be obtained from the results of these papers.

At the same time there were appearing numerous papers  where these coefficients were calculated in some particular cases
(see \cite{mo,l1,he,kl1,bb65}).  There are attempts to make the formulas easier (in some particular cases) by using special functions   (see \cite{al,blsf}).

Since an explicit result was not obtained investigations were continuing. One should mention the paper
\cite{h}, where the authors announced the discovery of an explicit formula for the Clebsh-Gordan coefficients for the algebra
я $\mathfrak{gl}_3$, but in this paper an answer is bulky and actually it is not explicit. The paper does not contain a formulas of type
$C^{i,j}_k(\alpha)=\dots$.   So even later there were appearing papers (see \cite{h1}) devoted to the search of an explicit formula for the Clebsh-Gordan coefficients.

Since a simple or at least an explicit formula was not obtained there appeared papers devoted to algorithmic caculation of Clebsh-Gordan coefficients for   $\mathfrak{gl}_3$, see
\cite{ww,rore}.  Let us especially note the paper  \cite{mm},  where a code of a program that calculates an arbitrary Clebsh-Gordan coefficient for  $\mathfrak{gl}_N$ is given. Moreover this algorithm is realized as an online calculator  \cite{onlmm}.

A starting point for the present paper is the following. In  \cite{bb1963}
a very interesting formula was derived. If one realizes an irreducible representation of   $\mathfrak{gl}_3$ in the space of functions on  $GL_3$,
then a function corresponding to a Gelfand-Tsetlin base vector can be expressed using a Gauss' hypegeometric function  $F_{2,1}$.  Also in
\cite{bb1963} a derivation of formulas for the action of generators of the algebra is sketched, which is bases on using of contiguity relations for the function    $F_{2,1}$.

The aim of the present paper is to obtain explicit for the Clebsh-Gordan coefficients for  $\mathfrak{gl}_3$ using the formulas expressing a function corresponding to a base vector through hypergeometric functions.

In contrast to  the paper  \cite{bb1963} to express the functions corresponding to Gelfand-Tsetlin diagrams  we use not the functions  $F_{2,1}$ but the hypergeometric  $\Gamma$-series.   It's construction has an important advantage over the function
$F_{2,1}$: such a series remembers all it's parameters.   This means the following. The considered  $\Gamma$-series satisfies a system of PDE called the Gelfand-Kapranov-Zelevinsky system (GKZ shortly).
The parameters  of a  $\Gamma$-series give asymptotic of it's  behaviour near a components of a singular locus of the GKZ system .
%  (функция же $F_{2,1}$ может быть представлена как линейная комбинация
% функций $F_{2,1}$ с другими параметрами).
This fact is used in the present paper to obtain relation for the     $\Gamma$-series that allow to obtain explicit formulas for the Clebsh-Gordan coefficients.

The structure of the paper is the following. In  \S\ref{opk}  the basic notions are introduced . In ~\ref{tppk}  we presnet an important result: an explicit construction of highest vectors of representations  $W^{\alpha}$  from
\eqref{rcg}.  Before a close problem of multiplicities was discussed
(см.~\cite{mul1,mul2}), i.e. a problem of construction of an index  $\alpha$  in   \eqref{rcg}.  However explicit construction of highest vectores was not known. These vectors are of two types.

The case when the representation  $W^{\alpha}$ has the highest vector of the first type  is considered in  \S\ref{sl1}. The result of the calculation of the  Clebsh-Gordan coefficients in presented in Theorems    \ref{mnt11} and  \ref{mnt12}.

The case when the representation   $W^{\alpha}$ has the highest vector of the first type  is considered in ~\ref{cg2}.
Note that this case is reduced to the previous one.  The result of the calculation  in presented in Theorems \ref{mnt21} and~\ref{mnt22}.

The resulting formulas are not simple but they are really explicit.

\section{The basic notions and construction}
\label{opk}

\subsection{$\Gamma$-series} Information about a $\Gamma$-series can be found in~\cite{GG}.

Let $B\subset \mathbb{Z}^N$  be a lattice, let $\gamma\in \mathbb{Z}^N$ be a fixed vector. Define  {\it  a hypergeometric
	$\Gamma$-series} in variables $z_1,\dots,z_N$ as follows

\begin{equation}
F_{\gamma,B}(z)=\sum_{b\in
	B}\frac{z^{b+\gamma}}{\Gamma(b+\gamma+1)},
\end{equation}
where  $z=(z_1,\dots,z_N)$. We use a multi-index notation:
$$
z^{b+\gamma}:=\prod_{i=1}^N
z_i^{b_i+\gamma_i},\quad \Gamma(b+\gamma+1):=\prod_{i=1}^N\Gamma(b_i+\gamma_i+1).
$$

We need the following properties of a  $\Gamma$-series.
%\begin{enumerate}

1)\,
A vector~$\gamma$ can be changed to  $\gamma+b$, $b\in B$, under this transformation  the series does not change.

2)\, $\frac{\partial }{\partial
	z_i}F_{\gamma,B}(z)=F_{\gamma-e_i,B}(z)$, where  $e_i=(0,\dots,1_{\text{
		at the place }i},\dots,0)$.

\smallskip

Below  we put  $N=4$, $\gamma=(\gamma_1,\gamma_2,\gamma_3,0)$,  and
$B=\mathbb{Z}\langle(1,-1,-1,1)\rangle$.

3)\,
Let  $F_{2,1}(a_1,a_2,b_1;z)=\sum_{n\in\mathbb{Z}^{\geq
		0}}\frac{(a_1)_n(a_2)_n}{(b_1)_n}z^n$, where
$(a)_n=\frac{\Gamma(a+n)}{\Gamma(a)}$, is a  Gauss' hypergeometric series. Then
\begin{align*}
F_{\gamma,B}(z_1,z_2,z_3,z_4)&=cz_1^{\gamma_1}z_2^{\gamma_2}z_3^{\gamma_3}F_{2,1}\Big(-\gamma_2,-\gamma_3,\gamma_1+1;\frac{z_1z_4}{z_2z_3}\Big),
\\
c&=\frac{1}{\Gamma(\gamma_1+1)\Gamma(\gamma_2+1)\Gamma(\gamma_3+1)}.
\end{align*}

4)\,  One has
$F_{2,1}(a,b,c;1)=\frac{\Gamma(c)\Gamma(c-a-b)}{\Gamma(c-a)\Gamma(c-b)}$.
Put  ${\bf 1}=(1,1,1,1)$, then for  $\gamma_4=0$
\begin{equation}
\label{fot1} F_{\gamma,B}({\bf
	1})=\frac{1}{\Gamma(\gamma_1+1)\Gamma(\gamma_2+1)\Gamma(\gamma_3+1)}
\cdot\frac{\Gamma(\gamma_1+1)\Gamma(\gamma_1+\gamma_2+\gamma_3+1)}{\Gamma(\gamma_1+\gamma_2+1)\Gamma(\gamma_1+\gamma_3+1)}.
\end{equation}

Also note that
\begin{equation}
\label{fot2}
F_{\gamma,B}(z_1,z_2,z_3,z_4)\mid_{z_1z_4=z_2z_3}=z_1^{\gamma_1}z_2^{\gamma_2}z_3^{\gamma_3}F_{\gamma,B}({\bf
	1}).
\end{equation}

5)\, A $\Gamma$-series satisfies the Gelfand-Kapranov-Zelevinsky system of PDE
\begin{align}
\begin{split}
\label{gkzs} \Big(\frac{\partial^2}{\partial z_1\partial
	z_4}-\frac{\partial^2}{\partial z_2\partial z_3}\Big)F_{\gamma,B}&=0,
\\
z_1\frac{\partial}{\partial z_1}F_{\gamma,B}+ z_2\frac{\partial}{\partial
	z_2}F_{\gamma,B}& =(\gamma_1+\gamma_2)F_{\gamma,B},\quad
\\
z_1\frac{\partial}{\partial z_1}F_{\gamma,B}+
z_3\frac{\partial}{\partial z_3}F_{\gamma,B}&=(\gamma_1+\gamma_3)F_{\gamma,B},
\\
z_1\frac{\partial}{\partial z_1}F_{\gamma,B}-z_4\frac{\partial}{\partial
	z_4}F_{\gamma,B}&=(\gamma_1-\gamma_4)F_{\gamma,B}.
\end{split}
\end{align}
%\end{enumerate}
Note that a singular locus of this system is  defined by the equations
\begin{equation}
z_{1}z_{2}z_{3}z_{4}(z_{1}z_{4}-z_{2}z_{3})=0.
\end{equation}

A sum of a  $\Gamma$-series is called a  $A$-hypergoemetric function.  Note that the   $\Gamma$-series considered in the paper are actually finite sums.

\subsection{A realization of a representation }
In the paper a realization of a representation of the Lie algebra
$\mathfrak{gl}_3$ in the space of function on the Lie group    $GL_3$.
On a function   $f(g)$,  where  $g\in GL_3$,  an element  $X\in GL_{3}$ acts by left shifts
\begin{equation}
\label{xf} (Xf)(g)=f(gX).
\end{equation}

Passing to an  infinitesimal action we obtain an action  of   $\mathfrak{gl}_3$ on the space of functions.

Let  $a_i^j$  be a function of a matrix element\footnote{The notation   $a_{i}^j$  for the  matrix elements is chosen by analogy with the papers \cite{bb1963,bl1968,bl1970,bl19731,bl19732};  in these papers a  bosonic realization is used, but this realization is equivalent to a realization in the space of functions on  $GL_3$.}, occurring in the row
$j$ and the column $i$. Introduce determinants
\begin{equation}
a_{i_1,\dots,i_k}:=\det(a_i^j)_{i=i_1,\dots,i_k}^{j=1,\dots,k}.
\end{equation}
In other words this is a determinant of a submatrix  in the matrix  $(a_i^j)$,
formed by rows   $1,\dots,k$  and columns
$i_1,\dots,i_k$.  As ~$a_i^j$, this a function on  $GL_3$.

An operator $E_{i,j}$ acts onto a determinant by transforming the column indices
\begin{equation}
\label{edet1}
E_{i,j}a_{i_1,\dots,i_k}=a_{\{i_1,\dots,i_k\}\mid_{j\mapsto i}},
\end{equation}where  $.\mid_{j\mapsto i}$ a substitution of an index  $j$ by
$i$, if the index~$j$ does not occur in   $\{i_1,\dots,i_k\}$,  then we put
$.\mid_{j\mapsto i}$ equal to zero. One has.
\begin{prop}
	The function \begin{equation} \label{stva}
	\frac{a_{1}^{m_{1}-m_{2}}}{(m_1-m_2)!}\frac{a_{1,2}^{m_2}}{m_2!}\end{equation}
	is a highest vector with the weight  $[m_1,m_2,0]$.
\end{prop}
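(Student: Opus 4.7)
The plan is to decompose the problem into two observations: the elementary functions $a_1$ and $a_{1,2}$ are themselves highest vectors (with weights $(1,0,0)$ and $(1,1,0)$ respectively), and the $\mathfrak{gl}_3$-action \eqref{xf} acts by derivations on polynomials in the matrix entries, so a product of two highest vectors annihilated by the upper-triangular $E_{i,j}$ is again annihilated, with weights adding according to the exponents. The factorial normalizations are scalars and play no role.

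First I would check the weights of $a_1$ and $a_{1,2}$ using \eqref{edet1}. The Cartan operator $E_{i,i}$ fixes $a_{i_1,\dots,i_k}$ when $i\in\{i_1,\dots,i_k\}$ and kills it otherwise, so $a_1$ has weight $(1,0,0)$ and $a_{1,2}$ has weight $(1,1,0)$. For the raising operators $E_{i,j}$ with $i<j$: on $a_1$, all three of $E_{1,2},E_{1,3},E_{2,3}$ vanish since the column index $j\in\{2,3\}$ does not occur in $\{1\}$; on $a_{1,2}$, the operators $E_{1,3}$ and $E_{2,3}$ vanish since $3\notin\{1,2\}$, while $E_{1,2}a_{1,2}$ substitutes $2\mapsto 1$ to produce the determinant $a_{1,1}$, whose underlying matrix has two equal columns and is therefore zero.

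Next I would invoke the derivation property. Since \eqref{xf} is the infinitesimal version of right multiplication on the group, each $E_{i,j}$ acts as a derivation on the polynomial algebra generated by the $a_i^j$, hence on the subalgebra generated by its minors. Applying the Leibniz rule to $f=a_1^{m_1-m_2}a_{1,2}^{m_2}$ shows that every raising operator $E_{i,j}$ ($i<j$) still annihilates $f$, while the eigenvalue of $E_{i,i}$ on $f$ is the sum $(m_1-m_2)\cdot(\text{weight of }a_1)_i+m_2\cdot(\text{weight of }a_{1,2})_i$. This yields $m_1$, $m_2$, $0$ for $i=1,2,3$, giving the weight $[m_1,m_2,0]$. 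The only subtle step is verifying the derivation property, which is immediate from the chain rule applied to $f(g\exp(tE_{i,j}))$ at $t=0$ on polynomial $f$; beyond this the verification is essentially bookkeeping.
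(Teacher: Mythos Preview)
Your argument is correct. The paper does not supply a proof of this proposition at all; it is stated as a known fact (the surrounding discussion refers to the realization in \cite{zh}), so there is nothing to compare against. Your verification via \eqref{edet1} that $a_1$ and $a_{1,2}$ are annihilated by $E_{i,j}$ for $i<j$ and carry weights $(1,0,0)$, $(1,1,0)$, together with the derivation property of the infinitesimal right-translation action, is exactly the standard argument and fills the gap cleanly.
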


We divide in  \eqref{stva} %$a_{1}^{m_1-m_2}a_{1,2}^{m_2}$
by  $(m_1-m_2)!m_2!$
to obtain simpler formulas below.

Also we use a realization in the space of function on a subgroup  $Z$ of matrices of type
\begin{equation}
\label{zcoor}
\begin{pmatrix}  1& x & y \\ 0&1 & z \\ 0 & 0 &1
\end{pmatrix},\end{equation}  i.e. in the space of functions of type $f(x,y,z)$; see  \cite{zh}.

\subsection{Tensor products}
\label{tppk}

A tensor product of representations can be realized in the space of functions on a product   $GL_3\times GL_3$. Let   $a_{i}^j$  be a matrix element of the first factor
$GL_3$,  and  let  $b_{i}^j$    be a matrix element of the second factor
$GL_3$.

In the previous Section we introduced determinants  $a_{i_1,\dots,i_k}$,
analogously one can define determinants  $b_{i_1,\dots,i_k}$. Let us also introduce the following expressions
\begin{align*}
(ab)_{i_1,i_1}&=\det\begin{pmatrix} a^1_i\\b^1_i
\end{pmatrix}_{i=i_1,i_2},\quad
(aabb)_{i_1,i_2,i_1,i_3}=a_{i_1,i_2}b_{i_1,i_3}-a_{i_1,i_3}b_{i_1,i_2},
\\
(aab)&=\det\begin{pmatrix} a^1_i\\a^2_i\\b^1_i
\end{pmatrix}_{i=1,2,3},
\,\,\qquad\qquad(abb)=\det
\begin{pmatrix}
a^1_i\\b^1_i\\b^2_i
\end{pmatrix}_{i=1,2,3}.
\end{align*}
Generators $E_{i,j}$ act onto
%на эти определители путем действия на столбцовые индексы по формулам, аналогичным формулам
these determinants by changing the column indices by the formulas analogous to~\eqref{edet1}.

Take a tensor product of representations of $gl_3$ with the highest weights  $[m_1,m_2,0]$ and  $[\bar{m}_1,\bar{m}_2,0]$  with  the highest vectors of type \eqref{stva}.  Split this tensor product into a sum of  $\mathfrak{gl}_3$-irreducibles.

\begin{theorem}
	In the space of  $\mathfrak{gl}_3$-highest vectors there exists a base consisting of vectors
	\begin{equation}
	\label{stv}
	a_1^{\alpha}b_1^{\beta}a_{1,2}^{\gamma}b_{1,2}^{\delta}(ab)_{1,2}^{\omega}(abb)^{\varphi}(aabb)^{\theta}_{1,2,1,3};
	\end{equation}
	where
	\begin{align}
	\begin{split}
	\label{aop}
	\alpha+\omega+\varphi&=m_1-m_2,\quad \gamma+\theta=m_2,
	\\
	\beta+\omega&=\bar{m}_1-\bar{m}_2,\quad
	\delta+\varphi+\theta=\bar{m}_2.
	\end{split}
	\end{align}
\end{theorem}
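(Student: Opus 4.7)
The plan is to verify that each monomial~\eqref{stv} is a highest vector of the diagonal $\mathfrak{gl}_3$-action lying in $U\otimes V$, and then to match the count of tuples $(\alpha,\beta,\gamma,\delta,\omega,\varphi,\theta)$ satisfying~\eqref{aop} with the dimension of the highest-vector space.

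First I would check that each of the seven building blocks is annihilated by the raising operators $E_{1,2},E_{1,3},E_{2,3}$. By~\eqref{edet1}, $E_{i,j}$ with $i<j$ acts as a derivation on products of determinants by substituting column index~$j$ by~$i$; applied to any of $a_{1,2}, b_{1,2}, (ab)_{1,2}, (aab), (abb), (aabb)_{1,2,1,3}$ this yields a determinant with two equal columns and hence vanishes, while $a_{1}$ and $b_{1}$ contain no column index exceeding~$1$ to substitute. Products of raising-invariant elements remain raising-invariant, so every monomial~\eqref{stv} is a highest vector.

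Next I would verify that each monomial lies in $U\otimes V$. Since $U$ is the column-$\mathfrak{gl}_3$ orbit of $a_1^{m_1-m_2}a_{1,2}^{m_2}$ and the column action preserves row-multiplicities, every element of $U$ has exactly $m_1$ entries with row-index~$1$ and $m_2$ with row-index~$2$ in the $a$-variables, and analogously for $V$. Summing the row contributions of each building block --- for instance $a_{1,2}$ contributes one row-$1$ and one row-$2$ in $a$; $(abb)$ contributes one row-$1$ in $a$ together with one row-$1$ and one row-$2$ in $b$; $(aabb)_{1,2,1,3}$ contributes one of each row in both $a$ and $b$ --- and equating with $(m_1,m_2;\bar m_1,\bar m_2)$ reproduces exactly~\eqref{aop}. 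Among polynomials of that bi-row-multidegree, the summand $U\otimes V$ is cut out by the further condition of being row-highest-weight under the row-raising operators $\tilde E_{21}^{a}, \tilde E_{21}^{b}$ (which substitute row~$2$ by row~$1$ in the $a$- or $b$-variables), and each building block satisfies this by the same equal-row vanishing argument. Linear independence of the monomials then follows from a monomial order in which distinct exponent tuples produce distinct leading terms, using the algebraic independence of the seven generators modulo Pl\"ucker-type relations that do not reduce the count in a fixed bi-row-multidegree.

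The main obstacle is completeness: one must show that the number of non-negative integer solutions of~\eqref{aop} equals $\sum_{\nu} c^{\nu}_{(m_1,m_2,0),(\bar m_1,\bar m_2,0)}$, the dimension of the highest-vector space of $U\otimes V$ predicted by the Littlewood--Richardson rule. I would handle this either by expressing both counts as coefficients in an explicit product of $GL_3$ Schur polynomials, or by exhibiting a bijection between the solution tuples of~\eqref{aop} and the Littlewood--Richardson tableaux of the appropriate shape and content.
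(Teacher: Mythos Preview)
Your verification that the monomials are highest vectors and lie in $U\otimes V$ is essentially the paper's steps 1)--2). The remaining two steps, however, are where the substance lies, and in both places your proposal is a sketch rather than an argument.

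For linear independence, invoking ``a monomial order'' and ``algebraic independence of the seven generators modulo Pl\"ucker-type relations that do not reduce the count'' is not a proof: you have neither specified the order nor established which relations hold and why they cannot cause a collapse. The paper handles this cleanly by restricting to the subgroup $Z\times Z$ of upper-unitriangular matrices. Under this restriction $a_1,b_1,a_{1,2},b_{1,2}\mapsto 1$, while $(ab)_{1,2}\mapsto x_2-x_1$, $(abb)\mapsto (x_2-x_1)z_2+x_1-y_1$, and $(aabb)_{1,2,1,3}\mapsto z_2-z_1$ are visibly algebraically independent functions of the six coordinates $(x_1,y_1,z_1,x_2,y_2,z_2)$. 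Hence distinct triples $(\omega,\varphi,\theta)$ already give linearly independent functions after restriction.

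For completeness, the paper takes a route genuinely different from your Littlewood--Richardson proposal. Using Zhelobenko's technique, the space of diagonal $\mathfrak{gl}_3$-highest vectors in $U\otimes V$ is identified (via $h=z\zeta^{-1}$ on $Z\times Z$) with polynomial solutions $g(x_3,y_3,z_3)$ of an explicit indicator system of PDEs. In the coordinates $u=y_3-x_3z_3$, $v=y_3+x_3z_3$, $w_1=z_3$, $w_2=x_3$ this system has the visible basis $w_1^A(u+v)^Bw_2^C$ subject to $A,B,C\ge 0$, $A\le m_2$, $B+C\le m_1-m_2$, $C\le\bar m_1-\bar m_2$, $A+B\le\bar m_2$. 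The assignment $A\leftrightarrow\theta$, $B\leftrightarrow\omega$, $C\leftrightarrow\varphi$ is then a trivial bijection with the non-negative solutions of~\eqref{aop} (equivalently, the inequalities $\omega+\varphi\le m_1-m_2$, $\theta\le m_2$, $\omega\le\bar m_1-\bar m_2$, $\varphi+\theta\le\bar m_2$), and the dimension count is complete. Your LR approach could in principle succeed, but you have not carried out either the Schur-coefficient identity or the tableau bijection; either of these is real combinatorial work, whereas the paper's argument is self-contained and elementary once the indicator system is written down.
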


\proof
{\bf 1)} The vectors  \eqref{stv}  belong to the tensor product of representations with the highest vectors  $ a_{1}^{m_1-m_2}a_{1,2}^{m_2}$
and  $ b_{1}^{\bar{m}_1-\bar{m}_2}b_{1,2}^{\bar{m}_2}$.
Indeed a space of a representation with the highest  vector  $
a_{1}^{m_1-m_2}a_{1,2}^{m_2}$  is a space of polynomials in $a_1,a_2,a_3, a_{1,2},a_{1,3},a_{2,3}$,  such that for each monomial one has   $\deg_{a_1}\!\!+\!\deg_{a_2}\!\!+\!\deg_{a_3}\!=\!m_1\!-\!m_2$,
${\deg_{a_{1,2}}\!\!+\!\deg_{a_{1,3}}\!\!+\!\deg_{a_{2,3}}\!=\!m_2}$ (see~\cite{zh}).
Analogously  one can describe a representation with the highest vector   $
b_{1}^{\bar{m}_1-\bar{m}_2}b_{1,2}^{\bar{m}_2}$.  Due to relations
\eqref{aop},  the function  \eqref{stv}  is a linear combination of products of  a polynomial in  $a$ and a polynomial in  $b$, with satisfy the written conditions.

\noindent{\bf 2)}  From a formula of type  \eqref{edet1} it immediately follows that
\eqref{stv} is a highest vector for  $\mathfrak{gl}_3$.

\noindent{\bf 3)} The vectors  \eqref{stv}  are linearly independent. Indeed, let us pass to a realization on the group  $Z\times Z$. Introduce the coordinate $x_1,y_1,z_1$ и  $x_2,y_2,z_2$  on the factors by analogy with  \eqref{zcoor}, one has:
\begin{align}
\begin{split}
\label{abbz}
a_1,b_1,a_{1,2},b_{1,2}&\mapsto 1,
\\
(abb)& \mapsto (x_2-x_1)z_2+x_{1}-y_{1},
\\
(ab)_{1,2}&\mapsto (x_{2}-x_{1}),
\\
(aabb)_{1,2,1,3} &\mapsto (z_{2}-z_{1}).
\end{split}
\end{align}
Hence functions  $(ab),(abb),(aabb)_{1,2,1,3}$ are algebraically independent, that is why the functions  \eqref{stv}  are linear independent.

\noindent{\bf 4)} Let us prove that linear combinations of  \eqref{stv} give all
$\mathfrak{gl}_3$-highest vectors.  To do it let us prove the equality of dimensions of the space of  $\mathfrak{gl}_3$-highest vectors and  of the span
vectors  \eqref{stv}.

\noindent{\bf 4.1)}  Let us give an explicit description of the space of  $\mathfrak{gl}_3$-highest vectors 
using a technique from \cite{zh}.  Elements of the tensor product we realize in the space of functions  $f$ on
$Z\times Z$.  This function belongs to a representation if and only if    $f$  satisfies the indicator sysytems in variables  $x_1,y_1,z_1$ and  $x_2,y_2,z_2$:
\begin{align}
\begin{split}
\label{is} & \begin{cases}L_1^{m_1-m_2+1}f=0,\\
L_2^{m_2+1}f=0,\end{cases}\text{ where }\quad
L_1=\frac{\partial}{\partial
	x_1}+z_1\frac{\partial}{\partial
	y_{1}},\quad L_2=\frac{\partial}{\partial z_{1}},\quad
\\
& \begin{cases}
\mathcal{L}_1^{\bar{m}_1-\bar{m}_2+1}f=0,\\\mathcal{L}_2^{\bar{m}_2+1}f=0,
\end{cases}\text{ where }\quad  \mathcal{L}_1=\frac{\partial}{\partial
	x_2}+z_2\frac{\partial}{\partial
	y_{2}},\quad \mathcal{L}_2=\frac{\partial}{\partial z_{2}}.
\end{split}
\end{align}

The function  $f$ is a
$\mathfrak{gl}_3$-highest vector if and only if it is invariant under the right action of the group  $diag(Z\times
Z)$.  Such a function can be written as  $f(\zeta,z)=g(\zeta h^{-1})$, $z\times \zeta\in Z\times Z$. Note that
\begin{align}
\label{zetaz} h=z\zeta^{-1}=\begin{pmatrix}  1 & x_{1}-x_{2} &
(y_{1}-y_{2})-z_{2}(x_{1}-x_{2})\\ 0 &1 & y_{1}-y_{2}
\\ 0& 0& 1
\end{pmatrix}.
\end{align}

The conditions
\eqref{is}  for the function  $f(\zeta,z)$  are equivalent to the following conditions \footnote{In
	\cite{zh}  the derivation of this result contains a mistake.} for the function  $g(h)$ of a matrix
$
h={\small\begin{pmatrix}1 & x_3 & y_3 \\0 &1& z_3\\ 0& 0 &1 \end{pmatrix}:}
$
\begin{align}
\begin{split}
\label{is1} &
\begin{cases}L_1^{m_2-m_1+1}g=0,\\L_2^{m_1+1}g=0,\end{cases}
\text{ where  }L_1=\frac{\partial}{\partial
	x_3}+z_{3}\frac{\partial}{\partial y_{3}}
,\quad L_2=\frac{\partial}{\partial
	z_{3}};
\\
&
\begin{cases}R_1^{\bar{m}_1+1}g=0,\\R_2^{\bar{m}_2-\bar{m}_1+1}g=0,\end{cases}
\text{ where }R_1=\frac{\partial}{\partial
	z_{3}}+x_{3}\frac{\partial}{\partial y_{3}},\quad
R_2=\frac{\partial}{\partial x_{3}}.
\end{split}
\end{align}
To find a base in the space of solution of  \eqref{is1} introduce variables
\begin{align}
\begin{split}
\label{uvw1} &u=y_{3}-x_{3}z_{3},\quad
v=y_{3}+x_{3}z_{3},\quad w_1=z_{3},\quad
w_2=x_{3}.
\end{split}
\end{align}
Note that  $(u+v)=2y_{3}$, $(v-u)=2x_{3}z_{3}$.
Instead of variables  $x_{3},y_{3},z_{3}$ one can use variables $u,v,w_1$ or  $u,v,w_2$. These two collections are related by the equality $w_2=\frac{v-u}{2w_1}$.

Introduce function
\begin{equation}
\label{w12}
w_1^A(u+v)^Bw_2^C=w_1^{A-C}(u+v)^{B}(v-u)^C=w_2^{C-A}(u+v)^B(v-u)^A.
\end{equation}
The space of polynomial solution of the system
$$
L_2^{m_2+1}g=L_1^{m_1-m_2+1}g=0
$$
has a base consisting of functions  \eqref{w12}, such that
\begin{equation}
\label{nerm}
A,B,C\geq 0,\quad A\leq m_2,\quad B+C\leq m_1-m_2.
\end{equation}
Analogously the space of polynomial solutions of the system
$$
R_2^{\bar{m}_1-\bar{m}_2+1}g=R_1^{\bar{m}_2+1}g=0
$$
has a base consisting of functions   \eqref{w12}, such that
\begin{equation}
\label{nerbarm}
A,B,C\geq 0,\quad C\leq \bar{m}_1-\bar{m}_2,\quad B+A\leq
\bar{m}_2.
\end{equation}

\noindent{\bf 4.2)}  Note that the vectors  \eqref{stv} are defined by nonnegative integers  $\omega,\varphi, \theta$, such that
\begin{equation}
\label{aopner}
\begin{split}
\omega+\varphi&\leq m_1-m_2,\quad \,\omega\leq
\bar{m}_1-\bar{m}_2,
\\
\theta &\leq m_2,\quad\quad \varphi +\theta \leq
\bar{m}_2.
\end{split}
\end{equation}

\noindent{\bf 4.3)}  To the inequalities  \eqref{aopner}  their correspond inequalities \eqref{nerm},
\eqref{nerbarm}.  The correspondence
$$
A\leftrightarrow \theta,\quad B\leftrightarrow
\omega,\quad C\leftrightarrow \varphi
$$
is a bijection between the solution spaces  of  \eqref{aopner}  and  \eqref{nerm}, \eqref{nerbarm}.  Hence the dimension of the span of vectors   \eqref{stv} equals to the dimension of the space of all $\mathfrak{gl}_3$-highest vectors.
\endproof

The formula  \eqref{stv} is non-symmetric: it involves  $\!(abb)$, but it does not involve~$\!(aab)$.   To obtain a symmetric formula one can operate as follows. Using a relations
$$
a_{1}a_{2,3}-a_2a_{1,3}+a_3a_{1,2}=0,\quad b_{1}b_{2,3}-b_2b_{1,3}+b_3b_{1,2}=0,
$$
one can obtain a relation
\begin{equation}
(ab)_{1,2}(aabb)_{1,2,1,3}=(aab)a_1b_{1,2}+(abb)a_{1,2}b_1.
\end{equation}
From these relations one can make the following conclusion. If one introduces a notation
\begin{align*}
f(\omega,\varphi,\psi,\theta)&=a_1^{\alpha}b_1^{\beta}a_{1,2}^{\gamma}
b_{1,2}^{\delta}(ab)_{1,2}^{\omega}(abb)^{\varphi}(aab)^{\psi}(aabb)^{\theta}_{1,2,1,3},
\\
\alpha+\omega+\varphi&=m_1-m_2,\quad  \gamma+\theta+\psi=m_2,
\\
\beta+\omega+\psi&=\bar{m}_1-\bar{m}_2,\quad
\delta+\varphi+\theta=\bar{m}_2,
\end{align*}
then one has a relation
\begin{align*}
&f(\omega,\varphi,\psi,\theta)=f(\omega-1,\varphi+1,\psi,\theta-1)+f(\omega-1,\varphi,\psi+1,\theta-1).
\end{align*}

Applying this transformation one  concludes that every highest vector can be expressed through  the vectors of type
$f(0,\varphi,\psi,\theta)$ and  $f(\omega,\varphi,\psi,0)$.  The fact that these vector are linear independent can be obtained  by restriction of the corresponding functions onto the subgroup of upper-triangular matrices.

\begin{prop}
	|in the space of $\mathfrak{gl}_3$-highest vectors  one has a base  \textup,
	consisting of vectors of type \textup:
	\begin{enumerate}
		\item
		\begin{equation}
		f(0,\varphi,\psi,\theta),\quad \begin{cases}\varphi,\psi,\theta\geq
		0,\\ \alpha+\varphi=m_1-m_2\quad  \gamma+\theta+\psi=m_2,\\
		\beta+\psi=\bar{m}_1-\bar{m}_2,\quad
		\delta+\varphi+\theta=\bar{m}_2,
		\end{cases}
		\end{equation}
		
		\item
		\begin{equation}
		f(\omega,\varphi,\psi,0),\quad \begin{cases}\omega,\varphi,\psi\geq
		0,\\\alpha+\omega+\varphi=m_1-m_2,\quad  \gamma+\theta+\psi=m_2,\\
		\beta+\omega+\psi=\bar{m}_1-\bar{m}_2,\quad
		\delta+\varphi+\theta=\bar{m}_2.
		\end{cases}
		\end{equation}
		
	\end{enumerate}
	
\end{prop}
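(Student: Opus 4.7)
The plan is to prove the Proposition in two steps: spanning and linear independence.

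For spanning, I would use the relation
\[ f(\omega,\varphi,\psi,\theta)=f(\omega-1,\varphi+1,\psi,\theta-1)+f(\omega-1,\varphi,\psi+1,\theta-1) \]
displayed just before the Proposition. When $\omega\geq 1$ and $\theta\geq 1$ both sides are defined, and on the right the sum $\omega+\theta$ is strictly smaller in each summand. Iterating at most $\min(\omega,\theta)$ times rewrites any $f(\omega,\varphi,\psi,\theta)$ as a linear combination of $f$'s with $\omega=0$ or $\theta=0$. Since the previous Theorem asserts that the subfamily $\{f(\omega,\varphi,0,\theta)\}$ already spans the space of $\mathfrak{gl}_3$-highest vectors, a fortiori the larger family appearing in the Proposition spans as well.

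For linear independence, I would restrict all basis vectors to the submanifold of $GL_3\times GL_3$ on which $a$ is the identity matrix and $b$ is the upper-triangular matrix with unit diagonal and off-diagonal entries $b_2^1=r'$, $b_3^1=s'$, $b_3^2=t'$. A short direct computation shows that $a_1, b_1, a_{1,2}, b_{1,2}$ all map to $1$, whereas
\[ (ab)_{1,2}\mapsto r',\quad (aab)\mapsto s',\quad (abb)\mapsto r't'-s',\quad (aabb)_{1,2,1,3}\mapsto t'. \]
Hence $f(\omega,\varphi,\psi,\theta)\mapsto (r')^{\omega}(s')^{\psi}(t')^{\theta}(r't'-s')^{\varphi}$. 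Every monomial $(r')^a(s')^b(t')^c$ appearing in the binomial expansion of this product satisfies $c-a=\theta-\omega$, so any hypothetical linear relation among basis vectors of types (1) and (2) splits into independent relations indexed by $d=c-a\in\mathbb{Z}$: for $d>0$ only type-(1) vectors with $\theta=d$ survive, for $d<0$ only type-(2) vectors with $\omega=-d$ survive, and for $d=0$ only those with $\omega=\theta=0$ (the overlap of the two families). Within each class the triangular structure of the binomial coefficients $\binom{\varphi}{k}(-1)^{\varphi-k}$ lets one recover each coefficient of the relation by reading off extremal monomials, forcing the relation to vanish.

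The main obstacle is the linear-independence step. The four functions $(ab)_{1,2}$, $(abb)$, $(aab)$, $(aabb)_{1,2,1,3}$ satisfy one algebraic relation on the full group, precisely the identity used to derive the recursion, so monomials in them are not a priori linearly independent. The role of the specialization above is to map these four functions to $r',\,r't'-s',\,s',\,t'$, among which only this same relation holds, thereby reducing the coefficient extraction to a transparent computation in $\mathbb{C}[r',s',t']$.
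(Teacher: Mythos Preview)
Your proof is correct and follows the same two-step outline as the paper: the spanning argument via the recursion $f(\omega,\varphi,\psi,\theta)=f(\omega-1,\varphi+1,\psi,\theta-1)+f(\omega-1,\varphi,\psi+1,\theta-1)$ is exactly the paper's, and the linear-independence argument by restriction is precisely what the paper indicates in its one-line hint (``restriction onto the subgroup of upper-triangular matrices''). Your choice to specialize further to $a=\mathrm{Id}$ and $b$ unipotent upper-triangular is a cleaner variant of the paper's $Z\times Z$ restriction---it produces the simpler images $r',\,r't'-s',\,s',\,t'$ and makes the grading by $d=\theta-\omega$ and the triangular extraction of coefficients entirely transparent, whereas the paper leaves all of this implicit; one small wording issue is the phrase ``a fortiori the larger family'', since the Proposition's family does not literally contain the Theorem's basis $\{f(\omega,\varphi,0,\theta)\}$, but your actual logic (apply the recursion to that basis) is sound.
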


\subsection{The Gelfand-Tsetlin base}
Let us return to representations of  $\mathfrak{gl}_3$ realized in the space of function on $GL_3$  with the highest vector  \eqref{stva}.
Let us give a formula for the functions corresponding to the Gelfand-Tsetlin base vectors. To fix a  normalization we take in the space of
$\mathfrak{gl}_2$-vectors the following base
\begin{equation}
\label{stv3}
\frac{a_3^{m_1-k_1}}{(m_1-k_1)!}\frac{a_1^{k_1-m_2}}{(k_1-m_2)!}\frac{a_{1,3}^{m_2-k_2}}{(m_2-k_2)!}\frac{a_{1,2}^{k_2}}{k_2!}.
\end{equation}

Note that this function can be rewritten as follows \cite{zh}:
\begin{equation*}
\frac{(E_{3,2})^{m_2-k_2}}{(m_2-k_2)!}\frac{\nabla_{3,1}^{m_1-k_1}}{(m_1-k_1)!}f,\quad
\nabla_{3,1}=E_{3,1}+(E_{1,1}-E_{2,2}+1)^{-1}E_{3,2}E_{2,1},
\end{equation*}
where the highest vector  $f$ is given by the formula  \eqref{stva}.  Now let us find a vector corresponding to an arbitrary  Gelfand-Tsetlin diagram:
\begin{align}
\begin{split}
\label{dgc}
\begin{pmatrix}
m_1 && m_2 &&0\\ &k_1&& k_2\\&&s
\end{pmatrix}.
\end{split}
\end{align}

\begin{theorem}\label{vec3}
	Put  $\!B\!=\!\mathbb{Z}\langle(1,\!-\!1,\!-\!1,1)\rangle,$
	${\gamma\!=\!(s_1\!-\!m_2,k_1\!-\!s_1,m_2\!-\!k_2,0),}$  then to the diagram  \eqref{dgc}
	there corresponds
	\begin{equation}
	\label{forvec2}
	\frac{a_3^{m_1-k_1}}{(m_1-k_1)!}\frac{a_{1,2}^{k_2}}{k_2!}F_{\gamma,B}(a_1,a_{2},a_{1,3},a_{2,3}).
	\end{equation}
\end{theorem}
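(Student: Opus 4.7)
Denote by $V$ the vector \eqref{stv3} and put $n:=k_{1}-s_{1}$. The plan is to show that $V':=(E_{2,1})^{n}V/n!$ coincides with the right-hand side of \eqref{forvec2}; once this is done, the theorem reduces to the standard Zhelobenko prescription \cite{zh} that the Gelfand-Tsetlin vector attached to the diagram \eqref{dgc} is obtained from the $\mathfrak{gl}_{2}$-highest vector \eqref{stv3} by this iterated normalized lowering.

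From \eqref{edet1} one reads $E_{2,1}a_{1}=a_{2}$, $E_{2,1}a_{1,3}=a_{2,3}$, while $E_{2,1}$ annihilates $a_{2},a_{3},a_{2,3},a_{1,2}$ (no index $1$ to substitute, or substitution forbidden because $2$ is already present). Acting as a derivation on polynomials in the matrix functions it therefore equals
\[
E_{2,1}=a_{2}\,\partial_{a_{1}}+a_{2,3}\,\partial_{a_{1,3}}.
\]
The two summands commute, since they involve disjoint independent variables, and the binomial theorem gives
\[
(E_{2,1})^{n}=\sum_{k=0}^{n}\binom{n}{k}\,a_{2}^{n-k}a_{2,3}^{k}\,\partial_{a_{1}}^{n-k}\partial_{a_{1,3}}^{k}.
\]
Applying this to $V$ (the factors $a_{3}^{m_{1}-k_{1}}/(m_{1}-k_{1})!$ and $a_{1,2}^{k_{2}}/k_{2}!$ are inert under these operators), dividing by $n!$ and collecting the remaining binomial and factorial coefficients produces the finite sum
\[
V'=\frac{a_{3}^{m_{1}-k_{1}}}{(m_{1}-k_{1})!}\frac{a_{1,2}^{k_{2}}}{k_{2}!}\sum_{k}\frac{a_{1}^{s_{1}-m_{2}+k}\,a_{2}^{k_{1}-s_{1}-k}\,a_{1,3}^{m_{2}-k_{2}-k}\,a_{2,3}^{k}}{(s_{1}-m_{2}+k)!\,(k_{1}-s_{1}-k)!\,(m_{2}-k_{2}-k)!\,k!},
\]
the summation ranging over all integers $k$ for which the four factorials are nonnegative.

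Finally I match this sum with the defining $\Gamma$-series. With $\gamma=(s_{1}-m_{2},k_{1}-s_{1},m_{2}-k_{2},0)$, $B=\mathbb{Z}\langle(1,-1,-1,1)\rangle$, $(z_{1},z_{2},z_{3},z_{4})=(a_{1},a_{2},a_{1,3},a_{2,3})$ and $b=k\cdot(1,-1,-1,1)\in B$, the $k$-th summand above is exactly $z^{b+\gamma}/\Gamma(b+\gamma+1)$. The convention $1/\Gamma(m+1)=0$ for $m\in\mathbb{Z}_{<0}$ automatically restricts the series to the combinatorially admissible range, so the displayed sum is $F_{\gamma,B}(a_{1},a_{2},a_{1,3},a_{2,3})$; this yields \eqref{forvec2} and proves the theorem. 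I expect the main technical point to be the opening reduction to the normalized lowering $(E_{2,1})^{n}V/n!$: one must verify that the factorial prefactor really is the one compatible with the normalization fixed by \eqref{stv3}. This is standard (see \cite{zh}), and once it is admitted the rest is a transparent bookkeeping of exponents.
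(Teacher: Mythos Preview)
Your proof is correct and more self-contained than the paper's. The paper does not actually carry out any computation here: it simply observes that Biedenharn and Baird \cite{bb1963} already wrote the Gelfand--Tsetlin vector as a monomial prefactor times a Gauss hypergeometric $F_{2,1}$, and then invokes property~3) of Section~2.1 (the explicit relation between $F_{2,1}$ and the $\Gamma$-series) to translate that formula into \eqref{forvec2}.

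You instead derive \eqref{forvec2} from scratch by applying the normalized lowering $E_{2,1}^{n}/n!$, $n=k_{1}-s$, to the $\mathfrak{gl}_{2}$-highest vector \eqref{stv3}. This matches exactly the paper's own normalization convention, made explicit in the display preceding \eqref{Diag0}, so your opening reduction is not merely ``standard'' but is precisely the convention in force here; the reservation you voice about the factorial prefactor is unnecessary. Your approach has the advantage of being independent of \cite{bb1963} and of making transparent why the one-dimensional lattice $B=\mathbb{Z}\langle(1,-1,-1,1)\rangle$ appears: it is exactly the structure produced by the binomial expansion of the commuting pair $a_{2}\partial_{a_{1}}$ and $a_{2,3}\partial_{a_{1,3}}$. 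The paper's route, by contrast, buys brevity and connects directly to the historical $F_{2,1}$ formulation used later in \S\ref{sl2}.
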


In   \cite{bb1963}  a close formula is given but instead of a
$\Gamma$-series the function  $F_{2,1}$ is used.  Using formula relating
$F_{2,1}$ and $F_{\gamma}$, one immediately obtains the Theorem

\section{ The case of the  vector  $f(\omega,\varphi,\psi,0)$}
\label{sl1} In the space of
$\mathfrak{gl}_3$-highest vectors we consider the base vectors of type
\begin{small}
	\begin{equation}
	\label{stv1} f_0\!=\!\frac{ a_1^{\alpha}b_1^{\beta}a_{1,2}^{\gamma}
		b_{1,2}^{\delta}(ab)_{1,2}^{\omega}(abb)^{\varphi}(aab)^{\psi}}{\alpha!\beta!\gamma!\delta!\omega!},\quad
	\begin{cases}
	\alpha\!+\!\omega\!+\!\psi\!=\!m_1\!-\!m_2,\quad  \gamma\!+\!\varphi\!=\!m_2,\\
	\beta\!+\!\omega\!+\!\varphi\!=\!\bar{m}_1\!-\!\bar{m}_2,\quad
	\delta\!+\!\psi\!=\!\bar{m}_2.
	\end{cases}
	\end{equation}
\end{small}
\hspace{-2mm}
The weight of this vector  equals to
$$
[M_1,M_2,M_3]=[\alpha+\beta+\gamma+\delta+\omega+\varphi+\psi,\varphi+\psi+\omega,\varphi+\psi].
$$

To write a formula for a Clebsh-Gordan coefficient we need a formula for an arbitrary Gelfand-Tsetlin base vector in a representation defined by  \eqref{stv1}. A vector corresponding to a diagram
\begin{equation}
\label{Diag0}
\begin{pmatrix}
M_1&&M_2&&M_3\\
&M_1-T_1 && M_2-T_2\\
&&\quad M_1-T_1-S
\end{pmatrix},
\end{equation}
can be written as follows:
\begin{equation*}
\frac{E_{2,1}^{S}}{S!} \frac{E_{3,2}^{T_2}}{T_2!}
\frac{\nabla_{3,1}^{T_1}}{T_1!}f_0.
\end{equation*}
Let us find a function on   $GL_3\times GL_3$ corresponding to this diagram.  Note that all generators  $\mathfrak{gl}_3$,  that are not Cartan element, act onto   $(aab)$ and   $(abb)$  as zero.

The main difficulty is to write a formula for the action of    $\nabla_{3,1}^{T_1}$ onto  $f_0$. Firstly in Section  \ref{nab31}  we write a  formula for the action of  $\nabla_{3,1}$.  Then to obtain a formula for the action of    $\nabla_{3,1}^{T_1}$,   we derive some new relations for   $\Gamma$-series.
Using them in Section  \ref{vct} we write a formula for  function corresponding to   \eqref{Diag0}.

Below we consider functions of type
$F_{\gamma,B}(a_1,a_2,a_{1,3},a_{2,3})$ for different $\gamma$.  That is why we use a shorter notation
\begin{equation}
F_{\gamma}\equiv F_{\gamma,B}(a_1,a_2,a_{1,3},a_{2,3}).
\end{equation}

\subsection{  The action of  $\nabla_{3,1}$}
\label{nab31}

Let us give a formula for the action of operators
$\widetilde{\nabla}_{3,1}=((E_{1,1}-E_{2,2}+1)E_{3,1}+E_{3,2}E_{2,1})$ onto a function  $g=a_1^{\alpha}b_1^{\beta}a_{1,2}^{\gamma}b_{1,2}^{\delta}(ab)_{1,2}^{\omega}$.

\begin{lemma}
	\begin{align*}
	\widetilde{\nabla}_{3,1}g&=(\alpha+
	\beta+\gamma+\delta+\omega+1)\Big(a_3\frac{\partial}{\partial a_1}
	+b_{3}\frac{\partial}{\partial
		b_1}\Big)g
	\\
	&-(aab)\frac{\partial^2}{\partial a_{1,2}\partial
		b_1}g -(abb)\frac{\partial^2}{\partial a_{1}\partial
		b_{1,2}}g.
	\end{align*}
\end{lemma}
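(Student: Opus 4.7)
The plan is first to simplify $\widetilde{\nabla}_{3,1}=(E_{1,1}-E_{2,2}+1)E_{3,1}+E_{3,2}E_{2,1}$ via the $\mathfrak{gl}_3$ commutators $[E_{1,1}-E_{2,2}+1,E_{3,1}]=-E_{3,1}$ and $[E_{3,2},E_{2,1}]=E_{3,1}$. A direct count of column indices in each factor of $g$ shows that $g$ is a weight vector of weight $[\alpha+\beta+\gamma+\delta+\omega,\,\gamma+\delta+\omega,\,0]$, so $(E_{1,1}-E_{2,2})g=(\alpha+\beta)g$. Combining these ingredients,
\[
\widetilde{\nabla}_{3,1}g \;=\; (\alpha+\beta+1)\,E_{3,1}g + E_{2,1}E_{3,2}g,
\]
which I will evaluate explicitly.

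Because $g$ is a monomial in the five ``opaque'' building blocks $a_1,b_1,a_{1,2},b_{1,2},(ab)_{1,2}$ and every $E_{i,j}$ is a derivation, \eqref{edet1} together with the direct computations $E_{3,1}(ab)_{1,2}=-(ab)_{2,3}$, $E_{3,2}(ab)_{1,2}=(ab)_{1,3}$, $E_{2,1}(ab)_{1,2}=0$ gives
\begin{align*}
E_{3,1}g &=\bigl[a_3\partial_{a_1}+b_3\partial_{b_1}-a_{2,3}\partial_{a_{1,2}}-b_{2,3}\partial_{b_{1,2}}-(ab)_{2,3}\partial_{(ab)_{1,2}}\bigr]g,\\
E_{3,2}g &=\bigl[a_{1,3}\partial_{a_{1,2}}+b_{1,3}\partial_{b_{1,2}}+(ab)_{1,3}\partial_{(ab)_{1,2}}\bigr]g,\\
E_{2,1}g &=\bigl[a_2\partial_{a_1}+b_2\partial_{b_1}\bigr]g.
\end{align*}
Applying $E_{2,1}$ to $E_{3,2}g$ by Leibniz (using $E_{2,1}a_{1,3}=a_{2,3}$, $E_{2,1}b_{1,3}=b_{2,3}$, $E_{2,1}(ab)_{1,3}=(ab)_{2,3}$, and that $E_{2,1}$ commutes with each of $\partial_{a_{1,2}},\partial_{b_{1,2}},\partial_{(ab)_{1,2}}$) produces three first-order plus six second-order contributions, which together with $(\alpha+\beta+1)E_{3,1}g$ give a wholly explicit expression for $\widetilde{\nabla}_{3,1}g$.

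It remains to match the stated right-hand side. Expand $(aab)=a_{1,2}b_3-a_{1,3}b_2+a_{2,3}b_1$ and $(abb)=a_1b_{2,3}-a_2b_{1,3}+a_3b_{1,2}$, and use the Euler relations $a_1\partial_{a_1}g=\alpha g$, $b_1\partial_{b_1}g=\beta g$, $a_{1,2}\partial_{a_{1,2}}g=\gamma g$, $b_{1,2}\partial_{b_{1,2}}g=\delta g$ to convert four of the six pieces of $-(aab)\partial_{a_{1,2}}\partial_{b_1}g-(abb)\partial_{a_1}\partial_{b_{1,2}}g$ into first-order derivatives of $g$. After collecting terms of $\mathrm{LHS}-\mathrm{RHS}$ by the pattern of their derivatives, the difference splits into three groups: an $\alpha\gamma$-group that cancels by the Pl\"ucker identity $a_1a_{2,3}-a_2a_{1,3}+a_3a_{1,2}=0$, a $\beta\delta$-group killed by its $b$-analogue, and an $\omega$-group (collecting all $\partial_{(ab)_{1,2}}$-terms from step two) which factors as
\[
-\alpha\omega\,b_1\bigl[a_1(ab)_{2,3}-a_2(ab)_{1,3}+a_3(ab)_{1,2}\bigr]-\beta\omega\,a_1\bigl[b_1(ab)_{2,3}-b_2(ab)_{1,3}+b_3(ab)_{1,2}\bigr]
\]
(times a common monomial) and vanishes because each bracket is the expansion of a $3\times 3$ determinant with a repeated row. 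The main obstacle is precisely this last regrouping of the $\omega$-group: the $\partial_{(ab)_{1,2}}$-contributions have no direct counterpart on the right-hand side and become visible only after recognising these two auxiliary Pl\"ucker identities.
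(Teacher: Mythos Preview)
Your proof is correct and follows essentially the same route as the paper: both arguments write the generators $E_{3,1},E_{3,2},E_{2,1}$ as explicit differential operators in the variables $a_1,b_1,a_{1,2},b_{1,2},(ab)_{1,2}$ and compare term by term. Your preliminary commutator manipulation $\widetilde{\nabla}_{3,1}g=(\alpha+\beta+1)E_{3,1}g+E_{2,1}E_{3,2}g$ is a harmless reordering of the paper's direct evaluation of $(E_{1,1}-E_{2,2}+1)E_{3,1}g$ and $E_{3,2}E_{2,1}g$, and your explicit Pl\"ucker bookkeeping (the $\alpha\gamma$-, $\beta\delta$-, and $\omega$-groups) spells out what the paper compresses into the phrase ``after a simplification.''
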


\proof
The operator   $E_{3,1}$ can be written as follows:
$$
a_3\frac{\partial}{\partial a_1}+b_3\frac{\partial}{\partial b_1}+a_{3,2}\frac{\partial}{\partial a_{1,2}}+b_{3,2}\frac{\partial}{\partial b_{1,2}}+(ab)_{3,2}\frac{\partial}{\partial (ab)_{1,2}}.
$$
Onto  $E_{3,1}g$ the operator   $(E_{1,1}-E_{2,2}+1)$ acts as a multiplication onto  $(\alpha+\beta)$.

The operators   $E_{3,2}E_{2,1}$ are written as follows:
\begin{align*}
a_3\frac{\partial}{\partial a_1}&+b_3\frac{\partial}{\partial b_1}+a_{2}a_{1,3}\frac{\partial^2}{\partial a_1\partial a_{1,2}}
\\
&+a_{2}b_{1,3}\frac{\partial^2}{\partial a_1\partial b_{1,2}} +b_{2}a_{1,3}\frac{\partial^2}{\partial b_1\partial a_{1,2}}
+b_{2}b_{1,3}\frac{\partial^2}{\partial b_1\partial b_{1,2}} \\
&+a_{2}(ab)_{1,3}\frac{\partial^2}{\partial a_1\partial (ab)_{1,2}} +b_{2}(ab)_{1,3}\frac{\partial^2}{\partial b_1\partial (ab)_{1,2}}.
\end{align*}
Summing these two operatora one obtains after a simplification:
\begin{equation*}
\begin{split}
\nabla_{3,1}g
&=
(\alpha+\beta+\gamma+\delta+1)
\\
&\times\Big(\alpha a_3a_1^{\alpha-1}b_1^{\beta}a_{1,2}^{\gamma}b_{1,2}^{\delta}(ab)_{1,2}^{\omega}%+\\&
+\beta a_3a_1^{\alpha}b_1^{\beta-1}a_{1,2}^{\gamma}b_{1,2}^{\delta}(ab)_{1,2}^{\omega}\Big)
\\
&\quad-\beta\gamma
(aab)a_1^{\alpha}b_1^{\beta-1}a_{1,2}^{\gamma-1}b_{1,2}^{\delta}(ab)_{1,2}^{\omega}-\alpha\delta
(abb)a_1^{\alpha-1}b_1^{\beta}a_{1,2}^{\gamma}b_{1,2}^{\delta-1}(ab)_{1,2}^{\omega}.\qedhere
\end{split}
\end{equation*}
\endproof

Put
\begin{align*}
&O_1=a_3\frac{\partial}{\partial a_1}+b_{3}\frac{\partial}{\partial
	b_1},\quad O_2=(aab)_{}\frac{\partial^2}{\partial a_{1,2}\partial
	b_1}+(abb)_{}\frac{\partial^2}{\partial a_{1}\partial b_{1,2}}.
\end{align*}
Note that
\begin{equation}
(E_{1,1}-1)O_1=O_1E_{1,1},\quad (E_{1,1}-1)O_2=O_2E_{1,1},\quad O_1O_2=O_2O_1.
\end{equation}
Thus one has,
\begin{align*}
\widetilde{\nabla}_{3,1}^nf&=\sum_{k} c^{\alpha+\beta+\gamma+
	\delta+\omega}_k O_1^kO_2^{n-k}f,
\\
c^{h}_k&=(-1)^{n-k}\sum_{1\leq i_1<\dots<i_k\leq
	n}(h+1-i_1)\dots(h+1-i_k).
\end{align*}

Since $\nabla_{3,1}=(E_{1,1}-E_{2,2}+1)^{-1}\widetilde{\nabla}_{3,1}$, we obtain the following statement
\begin{corollary}
	\begin{align*}
	&\frac{\nabla_{3,1}^n}{n!}f=\sum_{k} d^{\alpha+\beta+\gamma+
		\delta+\omega}_{k,n-k} \frac{O_1^k}{k!}\frac{O_2^{n-k}}{(n-k)!}f,\\
	&d^{h}_{k,n-k}=(-1)^{n-k}k!(n-k)!\sum_{1\leq i_1<\dots<i_{n-k}\leq
		n}(h+1-i_1)^{-1}\dots\\&\dots(h+1-i_k)^{-1}.
	\end{align*}
\end{corollary}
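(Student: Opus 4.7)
The plan is to combine the expansion $\widetilde{\nabla}_{3,1}^n f=\sum_k c^{h}_k\,O_1^k O_2^{n-k}f$ obtained just above with the definition $\nabla_{3,1}=(E_{1,1}-E_{2,2}+1)^{-1}\widetilde{\nabla}_{3,1}$.  The observation that makes the iteration clean is that both $O_1$ and $O_2$ lower the $E_{1,1}-E_{2,2}$-weight by one: this can be read off from their explicit expressions (each summand of $O_i$ either sends an index $1$ to an index $3$, or, in the case of $O_2$, removes one more $1$ than it puts back, while preserving the index count of $2$), and it is also encoded in the commutation relations stated just before the Corollary.  Consequently $\widetilde{\nabla}_{3,1}^{\,j}f$ is an eigenvector of $E_{1,1}-E_{2,2}+1$ with eigenvalue $\alpha+\beta+1-j$, and this weight does not depend on the distribution of $O_1$'s and $O_2$'s within any particular summand.

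Armed with this, I would apply $\nabla_{3,1}$ to $f$ a total of $n$ times from the right.  At the $j$-th step the factor $(E_{1,1}-E_{2,2}+1)^{-1}$ acts on a weight vector with eigenvalue $\alpha+\beta+1-j$ and therefore contributes the scalar $(\alpha+\beta+1-j)^{-1}$.  Composing the $n$ scalars gives
\[
\nabla_{3,1}^n f \;=\; \frac{\widetilde{\nabla}_{3,1}^n f}{(\alpha+\beta)(\alpha+\beta-1)\cdots(\alpha+\beta-n+1)}.
\]
Dividing by $n!$ and multiplying and dividing each summand by $k!\,(n-k)!$ to rewrite the operators in the form $\frac{O_1^k}{k!}\frac{O_2^{n-k}}{(n-k)!}$ lets me read off
\[
d^{h}_{k,n-k}\;=\;\frac{k!\,(n-k)!\,c^h_k}{n!\,(\alpha+\beta)(\alpha+\beta-1)\cdots(\alpha+\beta-n+1)}.
\]

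The last step is the algebraic simplification that puts this ratio into the displayed form $(-1)^{n-k}k!(n-k)!\sum\prod(h+1-i_{\ell})^{-1}$.  Since $c^h_k$ is, up to sign, a sum over $k$-subsets $I\subset\{1,\dots,n\}$ of a product $\prod_{i\in I}(h+1-i)$, while the denominator is a falling-factorial product over the full index set $\{1,\dots,n\}$, each summand of $c^h_k$ cancels $k$ of the $n$ denominator factors and leaves the $(n-k)$ reciprocals indexed by the complement $I^{c}$; summing over complements gives exactly the asserted expression.  The main obstacle is precisely this cancellation: one must match the labels $i\in\{1,\dots,n\}$ in $c^h_k$ with the labels of the denominator factors, keeping track of the off-by-one shifts introduced by the $+1$ in $E_{1,1}-E_{2,2}+1$ and by the fact that $\widetilde{\nabla}_{3,1}$ lowers the relevant weight by one at each step.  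The other two steps are formal consequences of the preceding Lemma and of eigenvalue tracking.
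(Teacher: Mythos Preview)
Your approach is exactly the one the paper intends: the paper's own proof of this Corollary is the single sentence ``Since $\nabla_{3,1}=(E_{1,1}-E_{2,2}+1)^{-1}\widetilde{\nabla}_{3,1}$, we obtain the following statement,'' and you have spelled out what that sentence means --- track the $(E_{1,1}-E_{2,2}+1)$-eigenvalue step by step, pull out the resulting falling-factorial denominator, and then cancel each summand of $c^h_k$ against the complementary $n-k$ factors of that denominator.

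One point deserves a remark. You compute, correctly, that the $(E_{1,1}-E_{2,2}+1)$-eigenvalue of $g=a_1^{\alpha}b_1^{\beta}a_{1,2}^{\gamma}b_{1,2}^{\delta}(ab)_{1,2}^{\omega}$ is $\alpha+\beta+1$, so the denominator you produce is $\prod_{j=1}^{n}(\alpha+\beta+1-j)$. For your complement--cancellation step to go through literally, the linear factors in $c^h_k$ must be the same numbers $(h+1-i)$ with the \emph{same} $h$; but the paper writes $h=\alpha+\beta+\gamma+\delta+\omega$ in both $c^h_k$ and $d^h_{k,n-k}$. This discrepancy is not a flaw in your argument --- it reflects internal inconsistencies in the paper (the Lemma's statement has the coefficient $\alpha+\beta+\gamma+\delta+\omega+1$, its proof computes $(E_{1,1}-E_{2,2}+1)E_{3,1}g$ as $(\alpha+\beta)E_{3,1}g$, and the final displayed line of the proof has yet a third value $\alpha+\beta+\gamma+\delta+1$). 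Your weight calculation is the correct one; the cancellation mechanism you describe is exactly right once a single consistent value of $h$ is used throughout.
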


\subsection{Relation for a  $\Gamma$-series}
Below we use the following relations for a   $\Gamma$-series.
\begin{lemma}
	One has a relation\textup:
	\begin{equation}
	\label{rel1}
	a_1^uF_{\gamma}=\sum_{\tau}
	Y_{\tau}(a_2a_{1,3}-a_1a_{2,3})^{p_{\tau}}F_{\gamma^{\tau}},
	\end{equation}
	
	\begin{align}
	\begin{split}
	\label{rel2}
	&\frac{(abb)^{{\lambda}}}{{\lambda}!}\frac{(aab)^{{\mu}}}{{\mu}!}\frac{(ab)_{1,2}^{\omega}}{\omega!}
	=\sum_{\tau}
	X_{\tau}a_3^{u_{\tau}}a_{1,2}^{v^{\tau}}(a_2a_{1,3}-a_1a_{2,3})^{p_{\tau}}
	\\
	&\qquad\qquad\qquad\qquad\quad\times b_{3}^{g_{\tau}}b_{1,2}^{h_{\tau}}(b_2b_{1,3}-b_1b_{2,3})^{q_{\tau}}F_{\theta^{\tau}}(a)
	G_{\vartheta^{\tau}}(b),
	\end{split}
	\end{align}
	
	\begin{equation}
	\label{rel3}
	\frac{E_{3,2}^n}{n!}\frac{a_3^{m_1-k_1}}{(m_1-k_1)!}\frac{a_{1,2}^{k_2}}{k_2!}F_{\gamma}
	=\sum_{\tau} Z_{\tau
	}a_3^{i_{\tau}}a_{1,2}^{j_{\tau}}(a_{1}a_{2,3}-a_{2}a_{1,3})^{r_{\tau}}F_{\varepsilon^{\tau}}.
	\end{equation}
	The index   $\tau$  runs through some set.
\end{lemma}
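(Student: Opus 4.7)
The plan is to establish the three identities in order, with \eqref{rel2} and \eqref{rel3} reducing essentially to \eqref{rel1} via algebraic manipulations. The central observation I would exploit is that on the hypersurface $a_1 a_{2,3} = a_2 a_{1,3}$, which is one component of the singular locus of the GKZ system, formula \eqref{fot2} collapses each $\Gamma$-series $F_\gamma$ to the single monomial $a_1^{\gamma_1} a_2^{\gamma_2} a_{1,3}^{\gamma_3} F_{\gamma,B}(\mathbf{1})$. Consequently, any polynomial expression and any $\Gamma$-series combination that restrict to the same function on this hypersurface must differ by a multiple of $(a_2 a_{1,3} - a_1 a_{2,3})$, and this drives the inductive decomposition into the stated form.

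For \eqref{rel1} I would proceed by induction on $u$. The base case $u=0$ is immediate. For the inductive step, one writes $a_1 F_{\gamma^\tau}$ directly from the series definition and splits the summand using $\Gamma(b_1+\gamma_1+2) = (b_1+\gamma_1+1)\Gamma(b_1+\gamma_1+1)$ into a piece proportional to $(\gamma_1+1) F_{\gamma^\tau + e_1}$ and a residual piece whose coefficient contains $b_1$. Since every $b \in B$ is of the form $k(1,-1,-1,1)$ and in particular $b_1 = b_4$, a shift of the summation variable rewrites the residual as $(a_2 a_{1,3} - a_1 a_{2,3})$ times another $\Gamma$-series with suitably shifted parameters. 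Iterating $u$ times produces the claimed expansion with coefficients $Y_\tau$ and parameter shifts $\gamma^\tau$ that can in principle be read off from the recursion.

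For \eqref{rel2} I would use the cofactor expansions $(abb) = a_1 b_{2,3} - a_2 b_{1,3} + a_3 b_{1,2}$, $(aab) = a_{2,3} b_1 - a_{1,3} b_2 + a_{1,2} b_3$, and $(ab)_{1,2} = a_1 b_2 - a_2 b_1$, and apply the multinomial theorem to the product on the left. Each resulting monomial factors into $a_3^{u_\tau} a_{1,2}^{v_\tau} b_3^{g_\tau} b_{1,2}^{h_\tau}$ times a monomial in $(a_1, a_2, a_{1,3}, a_{2,3})$ and a monomial in $(b_1, b_2, b_{1,3}, b_{2,3})$. These last two monomials are converted to $\Gamma$-series by iterated application of \eqref{rel1} and its analogues for the other three variables (proved by the same argument), producing the $(a_2 a_{1,3} - a_1 a_{2,3})^{p_\tau} F_{\theta^\tau}(a)$ and the analogous $b$-factors, after which collecting terms yields the claim.

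For \eqref{rel3} I would argue by induction on $n$. A direct check of the column-index substitution rule \eqref{edet1} yields $E_{3,2} a_3 = 0$, $E_{3,2} a_{1,2} = a_{1,3}$, $E_{3,2} a_{1,3} = 0$, $E_{3,2} a_{2,3} = 0$, $E_{3,2} a_1 = 0$, $E_{3,2} a_2 = a_3$; combined with property~2 of $\Gamma$-series this gives $E_{3,2} F_\gamma = a_3 F_{\gamma - e_2}$. Applying the Leibniz rule $n$ times produces sums of terms of the form $a_3^i a_{1,2}^j a_{1,3}^k F_{\gamma'}$, and absorbing the accumulated powers $a_{1,3}^k$ into the $\Gamma$-series via the $a_{1,3}$-analog of \eqref{rel1} produces the $(a_1 a_{2,3} - a_2 a_{1,3})^{r_\tau}$ factors in the claimed form. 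The main obstacle throughout is the bookkeeping in \eqref{rel1}: tracking the exact parameter shifts $\gamma^\tau$ and the recursion for the coefficients $Y_\tau$ is where the computation is most delicate, while \eqref{rel2} and \eqref{rel3} follow by combining \eqref{rel1} with multinomial expansion and the Leibniz rule.
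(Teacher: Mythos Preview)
Your plan for \eqref{rel2} and \eqref{rel3} matches the paper's: expand the determinants $(abb),(aab),(ab)_{1,2}$ multinomially, apply the Leibniz rule for $E_{3,2}$, and then feed the resulting monomials in $a_1,a_2,a_{1,3},a_{2,3}$ (resp.\ in the $b$-variables) into a relation of type \eqref{rel1}. The paper does exactly this, first recording the general fact that any monomial $a_1^{u}a_2^{v}a_{1,3}^{w}a_{2,3}^{t}F_{\gamma}$ (and in particular any monomial times $F_{(0,0,0,0)}\equiv 1$) admits an expansion of the form $\sum X_\alpha (a_2a_{1,3}-a_1a_{2,3})^{p_\alpha}F_{\gamma^\alpha}$.

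The gap is in your inductive step for \eqref{rel1}. Carrying out your manipulation --- writing $\Gamma(b_1+\gamma_1+1)^{-1}=(b_1+\gamma_1+1)\Gamma(b_1+\gamma_1+2)^{-1}$, splitting off $(\gamma_1+1)F_{\gamma+e_1}$, using $b_1=b_4$ (with $\gamma_4=0$) and shifting --- actually yields
\[
a_1F_{\gamma}=(\gamma_1+1)F_{\gamma+e_1}+a_{2,3}\,F_{\gamma+2e_1-e_2-e_3},
\]
i.e.\ a bare $a_{2,3}$ factor, not the claimed $(a_2a_{1,3}-a_1a_{2,3})$. A single shift of the summation index in a one--dimensional lattice cannot produce a \emph{difference} of two monomials in the $z$-variables. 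What is missing is precisely the mechanism the paper uses: one first exploits the three homogeneity equations of the GKZ system \eqref{gkzs} to express each of $a_1F_\gamma,\,a_2F_\gamma,\,a_{1,3}F_\gamma,\,a_{2,3}F_\gamma$ in terms of one another plus pure $F_{\gamma'}$ terms, and then combines these so that the stray linear factors assemble into the quadratic combination $(a_1a_{2,3}-a_2a_{1,3})$. The key derived identity (formula \eqref{a13} in the paper) is
\[
(a_{1}a_{2,3}-a_{2}a_{1,3})F_{\gamma}=\const\cdot a_{1,3}F_{\gamma+e_2}+\const\cdot F_{\gamma+e_1+e_{2,3}},
\]
from which one reads off $a_iF_\gamma=X_1F_{\gamma^1}+(a_1a_{2,3}-a_2a_{1,3})X_2F_{\gamma^2}$ for each $i$ and then iterates. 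Your series--level argument recovers individual contiguous relations (equivalent to the homogeneity equations), but the step ``shift gives $(a_2a_{1,3}-a_1a_{2,3})\times(\text{$\Gamma$-series})$'' is not correct as stated; you still need the combination step to reach the form required by the lemma.
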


We need explicit formulas for the coefficients  $X$, $Y$, $Z$.  Introduce notations. Let  $\gamma=(\gamma_1,\gamma_2,\gamma_{1,3},\gamma_{2,3})$, put
\begin{equation}
\label{ptg}
\Pi_{p,\gamma}:= \prod_{t=p}^1\big(t(t+1)+t(\gamma_1+\gamma_2+\gamma_{1,3}+\gamma_{2,3})\big).
\end{equation}

\begin{prop}
	\label{py}
	In  \eqref{rel1}  for   $p_{\tau}=p\in \mathbb{Z}_{\geq 0}$ one has a unique summand. For it
	\begin{align*}
	\gamma^{\tau}_1&=\gamma_1+u,\quad \gamma^{\tau}_2=\gamma_2-p,\\
	\gamma^{\tau}_{1,3} &=\gamma_{1,3}-p,\quad  \gamma^{\tau}_{2,3}=0,\\
	Y_{{\tau}}&=Y^u_{\gamma,p}=\frac{\Gamma(u)}{\Gamma(u-k)}\cdot\frac{F_{\gamma}({\bf
			1})}{F_{\gamma^{\alpha}}({\bf 1})}\cdot
	\frac{1}{\Pi_{p,\gamma^{\tau}}}.
	\end{align*}
\end{prop}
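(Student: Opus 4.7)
The plan is to pin down both the unique $\gamma^\tau$ for each $p$ and the coefficient $Y^u_{\gamma,p}$ by combining two facts: (i) the support of a $\Gamma$-series $F_{\gamma^\tau}$ is confined to the coset $\gamma^\tau+B\subset\mathbb{Z}^4$, and (ii) every $\Gamma$-series is annihilated by the distinguished GKZ operator $D:=\frac{\partial^2}{\partial z_1\partial z_4}-\frac{\partial^2}{\partial z_2\partial z_3}$.

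\emph{Uniqueness of $\gamma^\tau$.} First I observe that the two monomials composing $Q:=a_2a_{1,3}-a_1a_{2,3}$ have exponent vectors $(0,1,1,0)$ and $(1,0,0,1)$ that differ by the generator $(1,-1,-1,1)$ of $B$; hence every monomial in $Q^pF_{\gamma^\tau}$ lies in the single coset $\gamma^\tau+(0,p,p,0)+B$. The left-hand side $a_1^uF_\gamma$ has support in $(\gamma_1+u,\gamma_2,\gamma_{1,3},0)+B$ (recall $\gamma_4=0$ in the setup of Theorem~\ref{vec3}). Matching these cosets and imposing the normalization $\gamma^\tau_4=0$ forces $\gamma^\tau=(\gamma_1+u,\gamma_2-p,\gamma_{1,3}-p,0)$ uniquely, which already gives the uniqueness of the summand for each $p$ and the claimed values of its $\gamma^\tau_i$.

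\emph{Computing $Y^u_{\gamma,p}$.} Next I apply $D^p$ to both sides of~\eqref{rel1} and then specialize to the singular locus $\{Q=0\}$. The crucial intermediate identity is
\[
D(Q^pF_{\gamma^\tau})=-p\bigl(p+1+\gamma^\tau_1+\gamma^\tau_2+\gamma^\tau_{1,3}+\gamma^\tau_{2,3}\bigr)\,Q^{p-1}F_{\gamma^\tau},
\]
obtained by the Leibniz rule, using $DF_{\gamma^\tau}=0$ and summing the three degree relations in~\eqref{gkzs} to evaluate the Euler operator $(z_1\partial_1+z_2\partial_2+z_3\partial_3+z_4\partial_4)F_{\gamma^\tau}$ as $(\sum_i\gamma^\tau_i)F_{\gamma^\tau}$. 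Iterating gives $D^p(Q^pF_{\gamma^\tau})=(-1)^p\Pi_{p,\gamma^\tau}F_{\gamma^\tau}$ with $\Pi_{p,\gamma^\tau}$ as in~\eqref{ptg}. On the left-hand side, Property~2 yields by an induction on $p$ the identity $D^p(a_1^uF_\gamma)=\frac{\Gamma(u+1)}{\Gamma(u-p+1)}a_1^{u-p}F_{\gamma-pe_4}$.

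\emph{Final specialization.} After $D^p$ is applied, every summand indexed by $q>p$ still carries a positive power of $Q$ and hence vanishes on $\{Q=0\}$; summands with $q<p$ have already been killed, since after $q$ applications $D$ leaves only a multiple of $F_{\gamma^\tau_q}$ which the remaining $D^{p-q}$ annihilates. Thus only the $q=p$ term contributes. Rewriting $F_{\gamma-pe_4}$ via Property~1 as $F_{(\gamma_1+p,\gamma_2-p,\gamma_{1,3}-p,0)}$ and applying~\eqref{fot2} collapses both sides to a common monomial in $a_1,a_2,a_{1,3}$ multiplied by ratios of $F(\mathbf{1})$-values, and the equation solves uniquely for $Y^u_{\gamma,p}$. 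A final reshuffling via~\eqref{fot1} converts the resulting ratio of $F(\mathbf{1})$-values into the stated quotient $F_\gamma(\mathbf{1})/F_{\gamma^\tau}(\mathbf{1})$, multiplied by the factorial-type factor and by $\Pi_{p,\gamma^\tau}^{-1}$.

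The hardest step will be the identity $D(Q^pF_{\gamma^\tau})=-p(p+1+\sum_i\gamma^\tau_i)Q^{p-1}F_{\gamma^\tau}$: four cross terms coming from $\partial_1 Q=-z_4$ and $\partial_4 Q=-z_1$, together with two ``Euler'' contributions, must be combined with the three GKZ degree relations, and a sign slip distorts $\Pi_{p,\gamma^\tau}$ or changes the overall sign. A secondary bookkeeping task is to convert the raw answer, which a priori involves $F_{\gamma-pe_4}(\mathbf{1})$, into the closed form with $F_\gamma(\mathbf{1})/F_{\gamma^\tau}(\mathbf{1})$ that appears in the statement.
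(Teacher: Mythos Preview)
Your approach is essentially the same as the paper's: apply the GKZ operator $D^p$ (what the paper calls $O^p$) to both sides of \eqref{rel1} and then specialize to the locus $a_1a_{2,3}=a_2a_{1,3}$, using the key identity you rederive (the paper's Proposition~\ref{pre1}) to collapse the right-hand side to the single $p_\tau=p$ term. Your separate coset-matching argument for the uniqueness of $\gamma^\tau$ is a small explicit addition---the paper leaves this implicit in the computation---and your factor $\Gamma(u+1)/\Gamma(u-p+1)$ is what the calculation actually gives (the $\Gamma(u)/\Gamma(u-k)$ in the statement is a typo).
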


\begin{prop}
	\label{px} In   \eqref{rel2} one has  $p_{\tau}=q_{\tau}=q\in \mathbb{Z}_{\geq 0}$. To obtain\textup, a formula for   $X_{\tau},$ fix a partition
	\begin{align}
	\begin{split}
	\label{pxf}
	q\!=:\!q_1\!\!+\!q_2&\!\!+\!q_3,\quad \varphi_1\!\!+\!\varphi_2\!\!+\!\varphi_3\!:=\!\lambda\!-\!q\!-\!q_2,\quad  \psi_1\!\!+\!\psi_2\!\!+\!\psi_3\!:=\!\mu\!-\!q_3\!-\!q,\\
	\text{ and put }
	\\&
\begin{cases} 
u=\varphi_3+q_2,\quad
	v=\psi_3+q_3,\quad \ \\
	g=\psi_3+q_3,\quad  h=\varphi_3+q_2,
\\
	\theta_1=\varphi_1+\omega_1-\psi_1,\,\,
	\theta_2=\varphi_2+\omega_2+\psi_1,\quad \\ \theta_{1,3}=\psi_2+\psi_1,\,\,
		\theta_{2,3}=0,
	\\
	\vartheta_1=\psi_1+\omega_2-\varphi_1,\quad \vartheta_2=\psi_2+\omega_1+\varphi_1,\quad 
	\\ \vartheta_{1,3}=\varphi_2+\varphi_1,\quad \vartheta_{2,3}=0,
	\\
	X_{\tau}=X^{\theta,\vartheta,q_i}_{\psi_i,\varphi_i,\omega_i}=\frac{(-1)^{\varphi_2+\psi_2+\omega_2+q_2+q_3}}{\Pi_{q,\theta}\Pi_{q,\vartheta}}\cdot  \frac{h^{\lambda,\mu,\omega}_{q_1,q_2,q_3}}{F_{\theta}
		({\bf 1})F_{\vartheta} ({\bf 1}) }
		\\
	\qquad\qquad\times
	\frac{q!}{\varphi_1!\varphi_2!\varphi_3!\psi_1!\psi_2!\psi_3!\omega_1!\omega_2!}
	,
	\\
	\text{where }  h^{\lambda,\mu,\omega}_{q_1,q_2,q_3}
	=\sum_{\{1,\dots,q\}=I_1\sqcup I_2\sqcup I_3, \,\,|I_j|=q_j}
	\\
	\qquad\qquad\qquad\times\prod_{j\notin I_2\sqcup I_3}((q-j)
	\\+(\lambda-\{\text{the number of $i_s\in I_2,$ such that  $i_s<j$}\})		\\+(\mu-\{\text{the number of$i_s\in I_3,$ such that $i_s<j$}\})+\omega).
	\end{cases}
	\end{split}
	\end{align}
\end{prop}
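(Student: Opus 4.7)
The approach is to begin with a multinomial expansion of each determinant factor via the identities
\begin{equation*}
(abb)=a_1b_{2,3}-a_2b_{1,3}+a_3b_{1,2},\quad
(aab)=a_{1,2}b_3-a_{1,3}b_2+a_{2,3}b_1,\quad
(ab)_{1,2}=a_1b_2-a_2b_1.
\end{equation*}
Each monomial summand of the product then splits cleanly into a pure $a$-part and a pure $b$-part, so the regrouping of summands according to the triple of partitions $\tau=((q_i),(\varphi_i),(\psi_i))$ makes sense. Here $\omega_1+\omega_2=\omega$ records the split of $(ab)_{1,2}^\omega$, the $\varphi_i$ record the split of $(abb)^\lambda$, the $\psi_i$ record the split of $(aab)^\mu$, and the three numbers $q_i$ tell us which of the $q=p_\tau=q_\tau$ cross-terms contribute to each of the antisymmetric combinations $a_2a_{1,3}-a_1a_{2,3}$ and $b_2b_{1,3}-b_1b_{2,3}$.

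\textbf{Reduction to the $\Gamma$-series form.} Once the multinomial expansion is carried out and the prefactors $a_3^{u}a_{1,2}^{v}$ and $b_3^{g}b_{1,2}^{h}$ are extracted, the residual $a$-polynomial is homogeneous of degrees that, under the torus action in \eqref{gkzs}, match exactly the degrees of $(a_2a_{1,3}-a_1a_{2,3})^qF_\theta(a)$ with the announced $\theta$; likewise on the $b$-side. To identify this residual polynomial as a $\Gamma$-series times the antisymmetric factor, I would apply Proposition \ref{py} iteratively: formula \eqref{rel1} trades a power $a_1^u$ against powers of the antisymmetric combination inside a $\Gamma$-series, and reading it in reverse (using property (1) of the $\Gamma$-series to shift the vector $\gamma$) recovers $F_\theta(a)$ from the expansion. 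The same procedure applies verbatim to the $b$-side, producing $F_\vartheta(b)$.

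\textbf{Combinatorial identification.} The parameters $u,v,g,h,\theta_i,\vartheta_i$ are then forced by degree matching. The factorials $1/(\varphi_i!\psi_i!\omega_i!)$ in $X_\tau$ come directly from the multinomial coefficients, the sign $(-1)^{\varphi_2+\psi_2+\omega_2+q_2+q_3}$ is accumulated from the negative summands $-a_2b_{1,3}$, $-a_{1,3}b_2$, $-a_2b_1$ together with sign changes at each iterative step, and the coefficient $h^{\lambda,\mu,\omega}_{q_1,q_2,q_3}$ arises by summing, over all orderings, the incremental coefficients produced by iterating \eqref{rel1} at each $j=1,\ldots,q$ with three possible choices for the source of the new antisymmetrization step (indexed by the subsets $I_1,I_2,I_3$ of sizes $q_1,q_2,q_3$). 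The denominators $\Pi_{q,\theta}$, $\Pi_{q,\vartheta}$ and the normalizations $F_\theta(\mathbf{1})$, $F_\vartheta(\mathbf{1})$ are exactly the ones required by Proposition \ref{py} at each step of the iteration.

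\textbf{Main obstacle.} The principal difficulty is the combinatorial identification of $h^{\lambda,\mu,\omega}_{q_1,q_2,q_3}$: showing that summing the contributions of the multinomial expansion with a fixed $\tau$ produces precisely the stated sum over ordered set partitions $\{1,\ldots,q\}=I_1\sqcup I_2\sqcup I_3$ with the product of linear factors $(q-j)+(\lambda-\#\{I_2<j\})+(\mu-\#\{I_3<j\})+\omega$. Once the iterative scheme is properly set up, everything else reduces to careful bookkeeping of exponents and signs.
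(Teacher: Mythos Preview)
Your approach is genuinely different from the paper's, and as written it has a real gap at exactly the point you yourself flag as the ``main obstacle''.

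The paper does \emph{not} identify the coefficients by multinomially expanding the left side and then iterating Proposition~\ref{py}. Instead it uses the ``Taylor extraction'' principle \eqref{fz}: to isolate the term with $p_\tau=p$, $q_\tau=q$ one applies the commuting operators $O_a^{p}O_b^{q}$ (where $O_a=\partial^2/\partial a_1\partial a_{2,3}-\partial^2/\partial a_2\partial a_{1,3}$, and likewise $O_b$) to both sides of \eqref{rel2} and then restricts to $a_1a_{2,3}=a_2a_{1,3}$, $b_1b_{2,3}=b_2b_{1,3}$. On the right side Proposition~\ref{pre1} does the bookkeeping, producing the factors $\Pi_{q,\theta}$, $\Pi_{q,\vartheta}$. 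On the left side one first computes $O_b^{q}f(\lambda,\mu,\omega)=(a_1a_{2,3}-a_2a_{1,3})^{q}f(\lambda-q,\mu-q,\omega)$; this is what forces $p_\tau=q_\tau$, since applying $O_a^{p}$ with $p\neq q$ then gives zero after restriction. Next, a single application of $O_a$ to $(a_1a_{2,3}-a_2a_{1,3})^{q}f(\lambda,\mu,\omega)$ yields a three-term recursion (formula~\eqref{obr}): one branch carries the scalar factor $(q+\lambda+\mu+\omega)$, the other two branches pull out $a_3b_{1,2}$ and $b_3a_{1,2}$ respectively while lowering $\lambda$ or $\mu$. Iterating $q$ times and recording at each step $j=1,\dots,q$ which branch is taken is precisely what produces the ordered set partition $\{1,\dots,q\}=I_1\sqcup I_2\sqcup I_3$ and the product of linear factors defining $h^{\lambda,\mu,\omega}_{q_1,q_2,q_3}$. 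Only \emph{after} this operator computation does a multinomial expansion of the residual $f(\lambda-q-q_2,\mu-q-q_3,\omega)$ enter, yielding the $\varphi_i,\psi_i,\omega_i$ and the remaining factorials and signs.

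Your description of the $q_i$ as labelling ``which of the $q$ cross-terms contribute to each of the antisymmetric combinations'', and of $h$ as arising from iterating \eqref{rel1}, does not match this mechanism: relation \eqref{rel1} has no three-branch structure and its coefficients $Y^u_{\gamma,p}$ involve ratios of $\Gamma$-values and $\Pi$-factors, not the linear factors $(q-j)+\lambda+\mu+\omega-\cdots$ that appear in $h$. A direct multinomial-then-regroup argument may be salvageable, but it would require an independent combinatorial identity to recover $h$, and you have not supplied one; the paper's operator method is what makes that identification transparent.
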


\begin{prop}
	\label{pz} In the formula  \eqref{rel3}  one has   $r\in \mathbb{Z}_{\geq 0}$ and
	\begin{align*}
	i_{\tau}&=m_1-k_1+n_1,\quad  j_{\alpha}=k_2-n_2,
	\\[2mm]% &\\
	\varepsilon_1^{\tau}&=\gamma_1,\quad \varepsilon_2^{\tau}=\gamma_2-n_1-r,
	\\
	\varepsilon_{1,3}^{\tau}&=\gamma_{1,3}+n_2-r,\quad  \varepsilon_{2,3}=0,
	\\[2mm]%&\\
	&Z_{\tau }=Z_{n_1,n_2,r}^{\gamma, m_1-k_1}=\frac{1}{\Pi_{r,\varepsilon^{\tau}}}\cdot
	\frac{F_{\gamma-n_2e_2}({\bf 1})}{F_{\gamma^{\tau}}({\bf 1})} \cdot
	\frac{1}{(m_1-k_1)!n_1!(k_2-n_2)!n_2!}.
	\end{align*}
\end{prop}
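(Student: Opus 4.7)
The plan is to compute $\dfrac{E_{3,2}^{n}}{n!}$ applied to the given function by splitting $E_{3,2}$ into two commuting first-order summands, expanding binomially, and then absorbing the resulting stray factor into the $\Gamma$-series via an identity of the same kind as Proposition~\ref{py}.

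As a first step, since $E_{3,2}$ substitutes the column index~$2$ by~$3$, and among the variables entering the argument only $a_{2}$ and $a_{1,2}$ carry an index~$2$ (the substitution in $a_{2,3}$ producing a determinant with two equal columns), one has $E_{3,2}=a_{3}\,\partial/\partial a_{2}+a_{1,3}\,\partial/\partial a_{1,2}=:P+Q$. Since $P$ and $Q$ act on disjoint variables and through disjoint derivatives they commute, so
\[
\frac{E_{3,2}^{n}}{n!}=\sum_{n_{1}+n_{2}=n}\frac{P^{n_{1}}}{n_{1}!}\cdot\frac{Q^{n_{2}}}{n_{2}!}.
\]
Property~2) of $\Gamma$-series gives $P^{n_{1}}F_{\gamma}=a_{3}^{n_{1}}F_{\gamma-n_{1}e_{2}}$, while $Q^{n_{2}}$ acts only on $a_{1,2}^{k_{2}}$ and produces $\tfrac{k_{2}!}{(k_{2}-n_{2})!}a_{1,3}^{n_{2}}a_{1,2}^{k_{2}-n_{2}}$. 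Substituting yields the intermediate expression
\[
\sum_{n_{1}+n_{2}=n}\frac{a_{3}^{m_{1}-k_{1}+n_{1}}\,a_{1,3}^{n_{2}}\,a_{1,2}^{k_{2}-n_{2}}}{(m_{1}-k_{1})!\,n_{1}!\,(k_{2}-n_{2})!\,n_{2}!}\,F_{\gamma-n_{1}e_{2}}.
\]

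The discrepancy with the announced right-hand side is twofold: the stray factor $a_{1,3}^{n_{2}}$ stands outside the $\Gamma$-series, and the fourth index of $\varepsilon^{\tau}$ is required to stay zero while the factor $(a_{1}a_{2,3}-a_{2}a_{1,3})^{r}$ carries the slack. The key step is therefore to establish the analogue of Proposition~\ref{py} with $a_{1}^{u}$ replaced by $a_{1,3}^{n_{2}}$:
\[
a_{1,3}^{n_{2}}F_{\gamma'}=\sum_{r\geq 0}c_{r}(a_{1}a_{2,3}-a_{2}a_{1,3})^{r}F_{\varepsilon(r)},\qquad \varepsilon(r)=(\gamma'_{1},\,\gamma'_{2}-r,\,\gamma'_{1,3}+n_{2}-r,\,0).
\]
Such an identity is available because the lattice $B=\mathbb{Z}\langle(1,-1,-1,1)\rangle$ and the GKZ system~\eqref{gkzs} are symmetric under $z_{2}\leftrightarrow z_{3}$, so the argument for Proposition~\ref{py} transfers. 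I would pin down the constants $c_{r}$ by normalization: evaluating on the singular locus $a_{1}a_{2,3}=a_{2}a_{1,3}$ via \eqref{fot2} kills all $r\geq 1$ terms and, together with \eqref{fot1}, determines $c_{0}$ as $F_{\gamma'}({\bf 1})/F_{\varepsilon(0)}({\bf 1})$; the remaining $c_{r}$ follow by comparing $r$-th derivatives normal to this locus and produce the inverse Pochhammer-like product $\Pi_{r,\varepsilon(r)}^{-1}$ of \eqref{ptg}.

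Plugging this expansion into the intermediate expression with $\gamma'=\gamma-n_{1}e_{2}$ yields $\varepsilon^{\tau}=(\gamma_{1},\gamma_{2}-n_{1}-r,\gamma_{1,3}+n_{2}-r,0)$ and $i_{\tau}=m_{1}-k_{1}+n_{1}$, $j_{\tau}=k_{2}-n_{2}$, matching the proposition. The coefficient $Z_{\tau}$ is the product of the binomial-type factor $1/\bigl((m_{1}-k_{1})!\,n_{1}!\,(k_{2}-n_{2})!\,n_{2}!\bigr)$ and the constant $c_{r}$, giving the announced $\Pi_{r,\varepsilon^{\tau}}^{-1}$ and the ratio $F_{\gamma-n_{1}e_{2}}({\bf 1})/F_{\varepsilon^{\tau}}({\bf 1})$. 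The main obstacle is the clean derivation of the above identity and the explicit form of $c_{r}$; once this identity is in place, the assembly into $Z_{n_{1},n_{2},r}^{\gamma,\,m_{1}-k_{1}}$ is routine.
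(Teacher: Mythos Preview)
Your approach is essentially the paper's: split $E_{3,2}=a_3\partial_{a_2}+a_{1,3}\partial_{a_{1,2}}$, expand binomially to reach the intermediate sum with the stray $a_{1,3}^{n_2}$, and then extract the coefficients by the ``differentiate and restrict'' scheme \eqref{fz} --- your ``$r$-th derivatives normal to the locus'' is exactly the paper's application of $O^r$ together with Proposition~\ref{pre1}, followed by the substitution $a_1a_{2,3}=a_2a_{1,3}$.

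One small correction: the symmetry $z_2\leftrightarrow z_3$ interchanges $a_2$ and $a_{1,3}$, not $a_1$ and $a_{1,3}$, so it does not literally transport Proposition~\ref{py} to the $a_{1,3}^{n_2}$ case; what transfers is the \emph{method} of proof, and the paper simply runs that method directly rather than appealing to any symmetry. Concretely, since $OF_{\gamma'}=0$ one has $O\bigl(a_{1,3}^{n_2}F_{\gamma'}\bigr)\propto a_{1,3}^{n_2-1}F_{\gamma'-e_2}$, so $O^r$ on the left produces $a_{1,3}^{n_2-r}F_{\gamma'-re_2}$ up to factorials, and comparing with $O^r$ on the right (via Proposition~\ref{pre1}) after restriction gives the stated $\varepsilon^\tau$ and $Z_\tau$ --- this is your normalization argument carried out in one step rather than iteratively.
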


Th proofs of the results of this Section in given in Section   \ref{pr}.

\subsection{A vector of a tensor product corresponding to a diagram }\label{vct}
Apply an operator
\begin{equation}
\label{ponop}
\frac{E_{2,1}^{S}}{S!}\frac{E_{3,2}^{T_2}}{T_2!}\frac{\nabla_{3,1}^{T_1}}{T_1!}
\end{equation}
to the vector  \eqref{stv1}. Note that
$$
\alpha+\beta+\gamma+ \delta+\omega=M_1-M_3.
$$
The result of application of  \eqref{ponop} to  \eqref{stv1} can be calculated by several steps.

\noindent{\bf 1)} After application of
$\frac{\nabla_{3,1}^{T_1}}{T_1!}$  to  \eqref{ponop} one gets
\begin{equation}
\begin{split}
\label{bol1}
\sum_{k_1+k_3+\varphi'+\psi'=T_1}&d^{M_1-M_3}_{k_1+k_3,\varphi'+\psi'}\frac{\lambda!}{\varphi!\varphi'!}
\frac{\mu!}{\psi!\psi'!}
\\
&\!\times\! \frac{a_1^{\alpha\!-\!k_1\!-\!\varphi'}}{(\alpha\!\!-\!k_1\!\!-\!\varphi')!}\frac{b_1^{\beta\!-\!k_3\!-\!\psi'}}{(\beta\!\!-\!k_3\!\!-\!\psi')!}
\frac{a_{1,2}^{\gamma\!\!-\!\!\psi'}}{(\gamma\!-\!\psi')!}\frac{b_{1,2}^{\delta\!\!-\!\varphi'}}{(\delta-\varphi')!}
\\
&\times
\frac{(ab)_{1,2}^{\omega}}{\omega!}
\frac{a_3^{k_1}}{k_1!}\frac{b_3^{k_3}}{k_3!}\frac{(aab)^{\lambda}}{\lambda!}\frac{(abb)^{\mu}}{\mu!},
%\end{split}
%\end{align}
%\begin{align*}
\\
&\qquad\qquad\lambda=\varphi+\varphi',\quad \mu=\psi+\psi'.
\end{split}
\end{equation}

The summation is taken over all decompositions of   $T_1$, written below  $\sum$.

In the sums written  below the summation also is taken over all  partitions and some integer parameters.  When we move from  a sum to the next one the number of partitions increases and it not possible to  write them all under the sign   $\sum$.  We describe them in the test following the sum.

\noindent{\bf 2)} Apply to the product
$\frac{(ab)_{1,2}^{\omega}}{\omega!}\frac{(aab)^{(\psi+\psi')}}{(\psi+\psi')!}\frac{(abb)^{(\varphi+\varphi')}}{(\varphi+\varphi')!}$, containing in  \eqref{bol1},
the second main equality, using the  Plucker relation
$$
a_{1}a_{2,3}-a_2a_{1,3}=-a_{3}a_{1,2},\quad b_{1}b_{2,3}-b_2b_{1,3}=-b_{3}b_{1,2},
$$
one obtains
\begin{align}
\begin{split}
\label{bol2}
\sum
&d^{M_1-M_3}_{k_1+k_3,\varphi'+\psi'}\frac{\lambda!}{\varphi!\varphi'!}
\frac{\mu!}{\psi!\psi'!}
\\
&\times \frac{X_{\psi_i,\varphi_i,\omega_i}^{\theta,\vartheta,q_i}}{(\alpha-k_1-\varphi')!(\beta-k_3-\psi')!(\gamma-\psi')!(\delta-\varphi')!k_1!k_3!}
\\
&\times
a_3^{u+q+k_1}a_{1,2}^{v+q+\gamma-\psi'}b_{3}^{g+q+k_3}
b_{1,2}^{h+q+\delta-\varphi'}
\\
&\times
a_1^{\alpha-k_1-\varphi'}F_{\theta}(a)\cdot
b_1^{\beta-k_3-\psi'} G_{\vartheta}(b),
\end{split}
\end{align}

The summation in \eqref{bol2} is taken over all partitions of   $T_1$,  that appeared in the first step and over parameters  $q$, $\psi_i$, $\varphi_i$, $\omega_i$ defined in
\eqref{pxf}
According to  \eqref{pxf}, the parameters of the  $\Gamma$-series are the following:
\begin{align*}
\begin{cases}
u=\varphi_3+q_2,\quad  v=\psi_3+q_3,\quad \
g=\psi_3+q_3,\quad  h=\varphi_3+q_2,\\
\theta_1=\varphi_1+\omega_1-\psi_1,\quad
\theta_2=\varphi_2+\omega_2+\psi_1,\quad \theta_{1,3}=\psi_2+\psi_1,\quad \theta_{2,3}=0,\\
\vartheta_1=\psi_1+\omega_2-\varphi_1,\quad \vartheta_2=\psi_2+\omega_1+\varphi_1,\quad
\vartheta_{1,3}=\varphi_2+\varphi_1,\quad
\vartheta_{2,3}=0.\\
\end{cases}
\end{align*}

\noindent{\bf 3)}  Apply the first main equality to the following products from ~\eqref{bol2}:
{\allowdisplaybreaks
	\begin{align*}
	&a_1^{\alpha-k_1-\varphi'}F_{\theta}(a)=\sum_{t}
	Y^{\theta,\alpha-k_1-\varphi'}_t(a_1a_{2,3}-a_2a_{1,3})^{t}F_{\gamma^{t}}(a),\\
	&b_1^{\beta-k_3-\psi'} G_{\vartheta}(b)=\sum_{s}
	Y^{\vartheta,\beta-k_3-\psi'}_s(b_1b_{2,3}-b_2b_{1,3})^{s}G_{\delta^{s}}(b).
	\end{align*}
}

One obtains
\begin{align}
\begin{split}
\label{dddd}
&\sum
d^{M_1-M_3}_{k_1+k_3,\varphi'+\psi'}\frac{\lambda!}{\varphi!\varphi'!}
\frac{\mu!}{\psi!\psi'!}
\\
&\times \frac{(-1)^{t+s}X_{\psi_i,\varphi_i,\omega_i}^{\theta,\vartheta,q_i}\cdot Y^{\theta,\alpha-k_1-\varphi'}_t \cdot Y^{\vartheta,\beta-k_3-\psi'}_s}{(\alpha-k_1-\varphi')!(\beta-k_3-\psi')!(\gamma-\psi')!(\delta-\varphi')!k_1!k_3!}
\\
&\times
(u+q+k_1+t_{})!
\\
&\times (v+q+\gamma-\psi'+t_{})!(g+q+k_3+s_{})!(h+q+\delta-\psi'+s_{})!
\\
&\times
\frac{a_3^{u+q+k_1+t_{}}}{(u+q+k_1+t_{})!}\frac{a_{1,2}^{v+q+\gamma-\psi'+t_{}}}
{(v+q+\gamma-\psi'+t_{})!}
\frac{b_{3}^{g+q+k_3+s_{}}}{(g+q+k_3+s_{})!}
\\
&\times
\frac{b_{1,2}^{h+q+\delta-\varphi'+s_{}}}{(h+q+\delta-\varphi'+s_{})!}\cdot
F_{\gamma^{t}}(a)\cdot G_{\delta^{s}}(b).
\end{split}
\end{align}

The summation is taken over partitions that appeared in the previous steps and also over integer  non-negative parameters   $t$, $s$.
The parameters of the  $\Gamma$-series involved in this formulas are the following:
\begin{align*}
\begin{cases}
\gamma^t_1=\varphi_1+\omega_1-\psi_1+\alpha-k_1-\varphi',\,\,
\gamma^t_2=\varphi_2+\omega_2+\psi_1-t,\quad \\ \gamma^t_{1,3}=\psi_2+\psi_1-t,\quad
\gamma^t_{2,3}=0,%\\&
\\[2mm]
\delta^s_1=\psi_1+\omega_2-\varphi_1+\beta-k_3-\psi',\quad
\delta^s_2=\psi_2+\omega_1+\varphi_1-s,\quad \\ \delta^s_{1,3}=\varphi_2+\varphi_1-s,\quad  \delta^s_{2,3}=0.
\end{cases}
\end{align*}

\noindent{\bf 4)} Apply to  \eqref{dddd}  the operator
$\frac{E_{3,2}^{T_2}}{T_2!}$,  which action onto the tensor product can be written as follows:
$$
\frac{E_{3,2}^{T_2}}{T_2!}=\sum_{N+M=T_2}
\frac{E_{3,2}^{N}}{N!}\otimes \frac{E_{3,2}^{M}}{M!}.
$$

One obtains the following expression:
\begin{align}
%\begin{split}
\label{kipelov}
%&
\sum \!\const
\cdot i! j! o! e! %\cdot\\&
\cdot
\frac{a_3^{i}}{i!}\frac{a_{1,2}^{j}}
{j!}
\frac{b_{3}^{o}}{o!}
\frac{b_{1,2}^{e}}{e!}\!\cdot\! Z_{N_1\!,N_2\!,P}^{\gamma^t\!,u+q\!+k_1\!+t}
\!Z_{M_1\!,M_2\!,P}^{\delta^s\!,g_{\tau}\!+q\!+k_3\!+s} \!\! \cdot\!
F_{\epsilon}(a)\cdot G_{\varepsilon}(b).
%\end{split}
\end{align}

The summation is taken over the partitions and integer parameters form the previous steps and also over new non-negative integer parameters   $r$, $l$.
Here $const$  is a constant from  \eqref{dddd} and
\begin{align*}
\begin{cases}
i=u+q+k_1+t_{}+N_1+r,\quad  j=v+q+\gamma-\psi'+t_{}-N_2+r,\\
o=g+q+k_3+s_{}+M_1+l,\quad  e=h+q+\delta-\varphi'+s_{}-M_2+l,
\\[2mm] %&\\
\epsilon_1=\varphi_1+\omega_1-\psi_1+\alpha-k_1-\varphi',\quad
\epsilon_2=\varphi_2+\omega_2+\psi_1-t-N_1-r,\quad \\ \epsilon_{1,3}=\psi_2+\psi_1-t+N_2-r,\quad \epsilon_{2,3}=0,
\\[2mm] %&\\
\vartheta_1=\psi_1+\omega_2-\varphi_1+\beta-k_3-\psi',\quad
\vartheta_2=\psi_2+\omega_1+\varphi_1-s-M_1-l,\quad \\ \vartheta_{1,3}=\varphi_2+\varphi_1-s+M_2-l,\quad \vartheta_{2,3}=0.
\end{cases}
\end{align*}

\noindent{\bf 5)} Apply to  \eqref{kipelov} the operator  $\frac{E_{2,1}^{S}}{S!}$, which acts onto a tensor product as follows:
$$
\frac{E_{2,1}^{S}}{S!}=\sum_{H+J=S} \frac{E_{2,1}^{H}}{H!}\otimes
\frac{E_{2,1}^{J}}{J!}.
$$
As a result one obtains a linear combination of products
\begin{equation}\label{prdct}\frac{a_3^{u}a_{1,2}^v}{u!v!}F_{\rho}(a)\cdot
\frac{b_3^{g}b_{1,2}^{h}}{g!h!}G_{\varrho}(b),\end{equation} где
\begin{equation}
\begin{cases}
u=\varphi_3+q_2+q+k_1+t+N_1+r,\quad  \\ v=\psi_3+q_3+q+\gamma-\psi'+t+N_2+r,\quad
\\
N=N_1+N_2,
\\
g=\psi_3+q_3+q+k_3+s+M_1+l,\quad  \\ h=\varphi_3+q_2+q+\delta-\varphi'+s+M_2+l,\quad  \\  M=M_1+M_2,
\\[2mm]%&\\
\rho_1=\varphi_1+\omega_1-\psi_1+\alpha-k_1+\varphi'-H,\quad
\\
\rho_2=\varphi_2+\omega_2+\psi_1-t-N_1-r+H,\quad
\\ \rho_{1,3}=\psi_2+\psi_1-t+N_2-r\quad \rho_{2,3}=0,\\[2mm]%&\\
\varrho_1=\psi_1+\omega_2-\varphi_1+\beta-k_3+\psi'-J,\quad  \\ \varrho_2=\psi_2+\omega_1+\varphi_1-s-M_1-l+J,\quad
\\ \varrho_{1,3}=\varphi_2+\varphi_1-s+M_2-l,
\quad \varrho_{2,3}=0.\\
\end{cases}
\end{equation}

The factor $\frac{a_3^{u}a_{1,2}^v}{u!v!}F_{\rho}(a)$ corresponds to the diagram
\begin{align}
\begin{split}
\label{vsd1}
&\begin{pmatrix} m_1 && m_2 &&0 \\& m_1-t_1 && m_2-t_2 \\&&m_1-t_1-s
\end{pmatrix},\quad
\\&
\begin{cases}
t_1=(\varphi_3+q_2+q+k_1)+(t+N_1+r)\\
t_2=(\psi_2+\psi_1)-(t-N_2+r)\\
s=(\varphi_2+\omega_2+\psi_1)-(t+N_1+r)+H.
\end{cases}
\end{split}
\end{align}

The factor  $\frac{b_3^{g}b_{1,2}^{h}}{g!h!}G_{\varrho}(b)$ corresponds to the diagram
\begin{align}
\begin{split}
\label{vsd2}
&\begin{pmatrix} \bar{m}_1 && \bar{m}_2 &&0 \\& \bar{m}_1 -\bar{t}_1&& \bar{m}_2-\bar{t}_2 \\&&\bar{m}_1-\bar{t}_1-\bar{s}
\end{pmatrix},\quad
\\&
\begin{cases}
\bar{t}_1=(\psi_3+q_3+q+k_3)+(s+M_1+l)\\
\bar{t}_2=(\varphi_2+\varphi_1)-(s-M_2+l)\\
\bar{s}=(\psi_2+\omega_1+\varphi_1)-(s+M_1+l)+J.
\end{cases}
\end{split}
\end{align}

The coefficient at the product  \eqref{prdct} equals to
\begin{align}
\begin{split}
\label{coefcg}
&d_{k_1+k_3,\varphi'+\psi'}^{M_1-M_3}\frac{\lambda!}{\varphi!\varphi'!}
\frac{\mu!}{\psi!\psi'!}(-1)^{t+s+r+l}i! j! o! e!
\\
&\times \frac{  X_{\psi_i,\varphi_i,\omega_i}^{\theta,\vartheta,q_i}Y^{\theta,\alpha-k_1-\varphi'}_t
	Y^{\vartheta,\beta-k_3-\psi'}_s Z_{N_1,N_2,P}^{\gamma^t,u+q+k_1+t}
	Z_{M_1,M_2,P}^{\delta^s,g+q+k_3+s}  }{
	(\alpha-k_1-\varphi)!(\beta-k_3-\psi)!(\gamma-\psi)!(\delta-\varphi)!k_1!k_3!}
\\
&\times (u+q+k_1+t_{})!(v+q+\gamma-\psi'+t_{})!
\\
&\times
(g+q+k_3+s_{})!(h+q_{}+\delta-\varphi'+s_{})!\,.
\end{split}
\end{align}

The calculation of this expression should  be  done as follows.  Initially we are given  $\alpha,\beta,\gamma,\delta,\omega,\varphi,\psi$ and  \eqref{Diag0}. From  \eqref{Diag0}  we know  $M_1-K_1$, $M_2-K_2$, $K_1-S$.
\begin{enumerate}
	\item Fix  a partition  $k_1+k_3+\varphi'+\psi'=M_1-K_1$ and then calculate
	$d_{k_1+k_3,\varphi'+\psi'}^{M_1-M_3}$.
	\item Using  $\omega$, $\lambda=\varphi+\varphi',\mu=\psi+\psi'$,
	calculate $\theta$, $\vartheta$ and
	$X_{\psi_i,\varphi_i,\omega_i}^{\theta,\vartheta,q_i}$ according to the  Proposition  \ref{px}.
	
	\item Calculate  $\gamma^t$, $\delta^t$, $Y^{\theta,\alpha-k_1-\varphi'}_t$, $
	Y^{\vartheta,\beta-k_3-\psi'}_s$ according to the  Proposition   \ref{py}.
	\item Fix a partition  $M_2-K_2=N_1+N_2+M_1+M_2$.Using the  Proposition \ref{pz}, find $Z_{N_1,N_2,P}^{\gamma^t,u+q+k_1+t}$ и
	$Z_{M_1,M_2,P}^{\delta^s,g+q+k_3+s}$ , $i$, $j$, $o$, $e$.
\end{enumerate}

\subsection{The Clebsh-Gordan coefficients}
\label{cg}

Let us give the final answer in a simpler form. Take in
\eqref{rcg} a representation with the highest vector
\eqref{stv1}.  Take in this representation a vector corresponding to the diagram
\begin{equation}
\label{Diag}
\begin{pmatrix}
M_1&&M_2&&M_3\\
&M_1-T_1 && M_2-T_2\\
&&\quad M_1-T_1-S
\end{pmatrix}.
\end{equation}

In the previous Section we fixed  partitions
${T_1\!=\!k_1\!\!+\!k_3\!+\!\varphi'\!\!+\!\psi'}\!$, $T_2\!=\!N_1+N_2+M_1+M_2$, $S=J+H$,
integers  $q,r,s,r,l\in\mathbb{Z}_{\geq 0}$, and the partitions
$q=:q_1+q_2+q_3$ and
$\varphi'+\varphi-q-q_3=:\varphi_1+\varphi_2+\varphi_3$,
$\psi'+\psi-q-q_2=:\psi_1+\psi_2+\psi_3$.

Let us give another description of parameters involved in formulas
\eqref{vsd1}, \eqref{vsd2}, \eqref{coefcg}.   For the given diagram
\eqref{Diag}  we fix  partitions
\begin{align}
\begin{split}
\label{r1}
T_1&=:T'_1+T''_1+T'''_1+T''''_1,\quad  T_2=:T'_2+T''_2,
\\
T''_1+T''_2+\omega&=:L_1+L_2, \quad S=:S'+S'',
\\
&\text{   fix arbitrary  }A,B\in\mathbb{Z}_{\geq 0}.
\end{split}
\end{align}

\begin{theorem}
	\label{mnt11}
	Take a  $\mathfrak{gl}_3$-highest vector in    $U\otimes V$ of type
	$f(\omega,\varphi,\psi,0)$. Take a vector  \eqref{Diag} in the corresponding representation  $W^{\alpha}$.   Then the vector
	\eqref{Diag}  is a linear combination of tensor products of vectors
%	\begin{tiny}
		\begin{align*}
		\begin{pmatrix}m_1 && m_2 &&0\\
		&m_1-T'_1-A && m_2-T'''_1-T'_2+A\\
		&& m_1-T'_1-S'-L_1 \end{pmatrix}
		\end{align*}
%	\end{tiny}
	и
%	\begin{tiny}
		\begin{align*}
		\begin{pmatrix}\bar{m}_1 && \bar{m}_2 && 0\\
		&\bar{m}_1-T''_1-B &&\bar{m}_2 -T''''_1-T''_2+B\\
		&& \bar{m}_1-T''_1-S''-L_2 \end{pmatrix}.
		\end{align*}
%	\end{tiny}
\end{theorem}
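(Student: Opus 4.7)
The plan is to reinterpret the five-step computation of Section~\ref{vct} through the lens of Theorem~\ref{vec3}, and then to compress its multi-indexed summation into the parameters described in~\eqref{r1}. All of the analytic input has in fact already been done in Section~\ref{vct}; what remains is a combinatorial bookkeeping argument.

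I would start by recalling that the application of \eqref{ponop} to $f_0$ via the Lemma and Propositions~\ref{py}, \ref{px}, \ref{pz} produces the vector \eqref{Diag} as an explicit linear combination of products of the form~\eqref{prdct}, with coefficients~\eqref{coefcg}. The summation runs over the partitions $T_1=k_1+k_3+\varphi'+\psi'$, $T_2=N_1+N_2+M_1+M_2$, $S=H+J$, the triples $(q_i)$, $(\varphi_i)$, $(\psi_i)$, and the auxiliary integers $t,s,r,l$; the diagrams of the two factors are given by \eqref{vsd1} and \eqref{vsd2}.

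Next, Theorem~\ref{vec3} identifies each factor $\frac{a_3^u a_{1,2}^v}{u!\,v!}F_\rho(a)$ appearing in~\eqref{prdct} with the Gelfand--Tsetlin basis vector of the diagram~\eqref{vsd1} in the representation of highest weight $[m_1,m_2,0]$: matching $u=t_1$ with $m_1-k_1^{GT}$ and $v=m_2-t_2$ with $k_2^{GT}$, and reading the remaining entry $s_1^{GT}$ off from the $\Gamma$-series parameters $\rho_1,\rho_2,\rho_{1,3}$, recovers precisely the diagram~\eqref{vsd1}. The $b$-factor is identified in the same way with~\eqref{vsd2}.

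The essential step is then a re-parametrization of the summation. Declaring $T'_1$ and $T''_1$ to collect the contributions to $t_1$ and $\bar t_1$ respectively coming from the $O_1$-type actions, $T'''_1,T''''_1$ the analogous $O_2$-contributions, $T'_2=N_1+N_2$ and $T''_2=M_1+M_2$ the $E_{3,2}$-splittings, $L_1,L_2$ a repackaging of the exchange parameters $q,t,s,r,l$ subject to $L_1+L_2=T''_1+T''_2+\omega$ (reflecting how much of the original $(ab)_{1,2}^\omega$ is converted through the Pl\"ucker relations into factors $a_3a_{1,2}$ and $b_3b_{1,2}$), $S'=H$, $S''=J$, and letting $A,B\ge 0$ be the remaining free internal summation indices, one checks that the parameters in~\eqref{vsd1} and~\eqref{vsd2} become $t_1=T'_1+A$, $t_2=T'''_1+T'_2-A$, $t_1+s=T'_1+S'+L_1$, and analogously for the barred quantities. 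This matches exactly the diagrams announced in the statement. The main obstacle is precisely this bookkeeping: one must confirm that the relabeling is a bijection between the original summation domain and the domain described by~\eqref{r1}, with all new indices non-negative; once this is verified the theorem follows.
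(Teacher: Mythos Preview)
Your proposal is correct and follows essentially the same route as the paper: the analytic content is the five-step computation of Section~\ref{vct} culminating in \eqref{prdct}, \eqref{vsd1}, \eqref{vsd2}, and Theorem~\ref{mnt11} is obtained from it by the re-parametrization \eqref{r1}--\eqref{r3}, exactly as you outline. One small caution: your verbal description of what $T'_1,T''_1,T'''_1,T''''_1$ collect (``$O_1$-type'' vs.\ ``$O_2$-type'' contributions) is a bit loose --- in the paper $T'_1=k_1+\varphi_3+q+q_2$ mixes the $O_1$-index $k_1$ with parameters coming from Proposition~\ref{px} --- so when you carry out the bookkeeping you should match against \eqref{r2} and \eqref{r3} directly rather than rely on that heuristic.
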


To write a coefficient we fix partitions
\begin{align}
\begin{split}
\label{r2}
& T'_1=:k_1+\varphi_3+q+q_2,\quad  T''_1=:\varphi_1+\varphi_2,\\
& T''_1=:k_3+\psi_3+q+q_3,\quad  T''''_1=:\psi_1+\psi_2,\\
&\omega_2:=L_1-\varphi_2-\psi_1,\quad  \omega_1:=L_2-\psi_1-\varphi_2 \\
&A=:t+r+U,\quad  B=:s+l+V.
\end{split}
\end{align}
Introduce notations
\begin{align}
\begin{split}
\label{r3}
&q_1:=q-q_2-q_3,\quad
H:=S',\quad  J=S'',\quad \\&  N_1=U,\quad
N_2=T'_2-U,\quad M_1=V,\quad
M_2=T''_2-V.
\end{split}
\end{align}

The calculations \textup,  made in the previous Section \textup, give the following theorem.
\begin{theorem}
	\label{mnt12}
	The coefficient at the tensor product form the Theorem  \textup{\ref{mnt11}}  equals to the sum of expressions   \eqref{coefcg} over all partitions  \eqref{r2}\textup, \eqref{r3}\textup,where symbols  $X,Y,Z$  о are defined in Propositions
	\textup{\ref{px}, \ref{py}, \ref{pz},}  and parameters in the arguments  $X,Y,Z$ are defined by partitions    \eqref{r1}\textup, \eqref{r2}\textup, \eqref{r3}.
\end{theorem}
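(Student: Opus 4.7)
The plan is to read off the Clebsh--Gordan coefficient directly from the five--step computation of Section~\ref{vct}. After all three operator actions in \eqref{ponop} have been expanded, the image of the highest vector is exhibited as a sum of products \eqref{prdct}, and by Theorem~\ref{vec3} each such product is a Gelfand--Tsetlin basis vector in the corresponding tensor factor, with the diagram entries read off from the parameters $u,v,\rho$ and $g,h,\varrho$ as in \eqref{vsd1}, \eqref{vsd2}. Hence the Clebsh--Gordan coefficient in front of a prescribed pair of Gelfand--Tsetlin vectors is the sum of the scalars \eqref{coefcg} taken over all internal parameter tuples which produce that particular pair of diagrams.

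First I would compare the entries of diagrams \eqref{vsd1}, \eqref{vsd2} to those in the statement of Theorem~\ref{mnt11}. This gives a system of linear conditions on the internal parameters $(k_1,k_3,\varphi',\psi',q,q_i,\varphi_i,\psi_i,\omega_i,t,s,r,l,N_i,M_i,H,J)$ originally introduced in Section~\ref{vct}, in terms of the external parameters $(T_1,T_2,S,L_1,L_2,S',S'',A,B)$ which label the target diagrams. The substitutions \eqref{r1}, \eqref{r2}, \eqref{r3} are proposed as an explicit parameterization of all solutions of this system. I would then check three things: (a)~every relation coming from matching the entries $t_1,t_2,s$ in \eqref{vsd1} and $\bar t_1,\bar t_2,\bar s$ in \eqref{vsd2} is satisfied identically after the substitutions; (b)~the sums appearing in \eqref{r1} (such as $T_1 = T'_1+T''_1+T'''_1+T''''_1$ and $T''_1+T''_2+\omega=L_1+L_2$) are forced precisely by the equalities $\alpha+\omega+\psi=m_1-m_2$, etc., from \eqref{stv1}; (c)~the non-negativity conditions on internal parameters translate into the partition constraints in \eqref{r1}--\eqref{r3}.

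Once the bijection between internal tuples and pairs (\eqref{r1}--\eqref{r3}, target diagrams) is verified, the theorem reduces to substitution: the symbol $X^{\theta,\vartheta,q_i}_{\psi_i,\varphi_i,\omega_i}$ is computed from Proposition~\ref{px} with $\omega_1,\omega_2$ determined by \eqref{r2}, the symbols $Y^{\theta,\alpha-k_1-\varphi'}_t$ and $Y^{\vartheta,\beta-k_3-\psi'}_s$ from Proposition~\ref{py} with $k_1,\varphi',k_3,\psi'$ read off via \eqref{r2}, and the symbols $Z_{N_1,N_2,P}^{\gamma^t,u+q+k_1+t}$, $Z_{M_1,M_2,P}^{\delta^s,g+q+k_3+s}$ from Proposition~\ref{pz} with $(N_1,N_2)=(U,T'_2-U)$, $(M_1,M_2)=(V,T''_2-V)$ by \eqref{r3}. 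The coefficient \eqref{coefcg} then depends only on the external data together with the remaining free partition indices, and summing over the latter yields the assertion of Theorem~\ref{mnt12}.

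The main obstacle is purely combinatorial bookkeeping: the substitutions \eqref{r2}, \eqref{r3} involve roughly twenty integer parameters, and one must verify with care that (i)~no internal parameter is left unconstrained or over-constrained, and (ii)~each tuple of internal parameters contributing to a given pair of target diagrams is counted exactly once. I would handle this by checking the substitutions block by block, grouping parameters according to which factor in \eqref{coefcg} they enter (the $d$-factor, the $X$-factor, the two $Y$-factors, the two $Z$-factors, and the factorials), and verifying for each block that the map from internal to external indices is a bijection onto the index set described by \eqref{r1}--\eqref{r3}.
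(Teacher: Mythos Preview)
Your proposal is correct and follows exactly the paper's approach: the paper states Theorem~\ref{mnt12} as a direct consequence of the five-step computation in Section~\ref{vct}, with the only additional content being the reparametrization \eqref{r1}--\eqref{r3} of the internal summation indices. In fact the paper offers no argument beyond the sentence ``The calculations, made in the previous Section, give the following theorem,'' so your plan to verify the bijection between internal tuples and the partition data \eqref{r1}--\eqref{r3} block by block is more detailed than what the paper itself supplies.
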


\section{The case of the vector  $f(0,\varphi,\psi,\theta)$}
\label{sl2}
\subsection{Two viewpoints on the contragradient representation }
Take a representation  $V$ with the highest vector
\begin{equation}\label{s1}a_1^{m_1-m_2}a_{1,2}^{m_2}.
\end{equation}

The exists an invariant scalar product on the space  $V$ \break
$(\,\cdot\,,\,\cdot\,),$ such that the Gelfand-Tselin base is orthogonal with respect to this base.  Using this scalar product one can identify $V$
and  $V^*:$   $v\leftrightarrow(v,\,\cdot\,)$.  The action of the algebra ontoа  $V^*$ is the following\textup:
\begin{equation}
g:  \,(v,\,\cdot\,)\mapsto (-g^tv,\,\cdot\,).
\end{equation}
The obtained representation is called contragradient.
\begin{prop}
	The base in  $V^*,$  dual to the  Gelfand-Tsetlin base in  $V,$  is a base proportional to the Gelfand-Tsetlin base in   $V^*$.
\end{prop}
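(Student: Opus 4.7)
The plan is to use the characterization of the Gelfand-Tsetlin basis as the unique (up to scalars) basis of $V$ compatible with the branching filtration $\mathfrak{gl}_1\subset\mathfrak{gl}_2\subset\mathfrak{gl}_3$. Since the branchings $\mathfrak{gl}_{k+1}\downarrow\mathfrak{gl}_k$ are multiplicity-free, the $\mathfrak{gl}_2$-isotypic components of $V$ coincide with its individual irreducible $\mathfrak{gl}_2$-summands, and each GT basis vector $v_D$ spans the one-dimensional intersection of the $\mathfrak{gl}_2$-summand specified by the second row of $D$ with the $\mathfrak{gl}_1$-weight subspace specified by the third row. The same description applies to the GT basis of $V^*$, so to prove the proposition it suffices to show that the dual functionals land in the corresponding one-dimensional subspaces of $V^*$.

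The central step is to verify that the linear isomorphism $\iota:V\to V^*$, $v\mapsto(v,\,\cdot\,)$, sends each $\mathfrak{gl}_k$-invariant subspace $U\subset V$ to a $\mathfrak{gl}_k$-invariant subspace of $V^*$. Invariance of the scalar product, i.e.\ $(gv,w)+(v,gw)=0$ for $g\in\mathfrak{gl}_3$, forces $U^\perp$ to be $\mathfrak{gl}_k$-invariant whenever $U$ is, since for $u\in U$ and $u^\perp\in U^\perp$ one has $(gu^\perp,u)=-(u^\perp,gu)=0$. Consequently $\iota(U)=\operatorname{Ann}(U^\perp)\subset V^*$ is preserved by the contragradient action $g:(v,\,\cdot\,)\mapsto(-g^tv,\,\cdot\,)$ stated in the proposition.

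Applying this to $k=2$ and $k=1$, the map $\iota$ carries the $\mathfrak{gl}_k$-isotypic decomposition of $V$ bijectively onto that of $V^*$, so $\iota(v_D)$ lies in the one-dimensional intersection of the branching-subspaces in $V^*$ corresponding to $D$. That intersection is, by the characterization recalled above, spanned by a single GT basis vector of $V^*$; hence $\iota(v_D)$ is proportional to this basis vector. Finally, orthogonality of the GT basis of $V$ under the scalar product yields $v_D^*=\iota(v_D)/(v_D,v_D)$, so the dual basis in $V^*$ is proportional entry-by-entry to the GT basis of $V^*$, as claimed.

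The main technical obstacle is the equivariance step linking the invariance of the scalar product with the specific contragradient formula; once one verifies from the defining relation $(g^tv,w)=(v,gw)$ that $\iota(U)=\operatorname{Ann}(U^\perp)$ really is a subrepresentation, the multiplicity-free branching and the uniqueness of the GT basis close the proof without any further computation.
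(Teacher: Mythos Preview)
Your argument is correct and rests on the same underlying principle as the paper's proof---that the Gelfand--Tsetlin basis is characterized by the chain $\mathfrak{gl}_1\subset\mathfrak{gl}_2\subset\mathfrak{gl}_3$, and that the identification $V\to V^*$ respects this chain because each $\mathfrak{gl}_k$ is closed under $g\mapsto g^t$. The execution, however, is genuinely different. The paper argues directly with highest and lowest vectors: the highest vector of $V^*$ is orthogonal to the image of every raising operator $z_+$, hence orthogonal to the span of all non-lowest Gelfand--Tsetlin vectors, and therefore proportional to the lowest vector of $V$; it then sketches an iteration of this observation down the chain. Your route is more structural: you show once and for all that $\iota$ carries $\mathfrak{gl}_k$-invariant subspaces to $\mathfrak{gl}_k$-invariant subspaces, and then invoke the multiplicity-free branching to pin each $\iota(v_D)$ down to a one-dimensional intersection. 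Your version makes the inductive mechanism fully explicit and generalizes transparently to $\mathfrak{gl}_N$, whereas the paper's version is shorter but leaves the passage from ``highest vector goes to lowest vector'' to ``every GT vector goes to a GT vector'' to the reader.

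One small inconsistency to fix: you first write the invariance as $(gv,w)+(v,gw)=0$, but later (correctly) use $(g^tv,w)=(v,gw)$. Only the latter is the contravariant form the paper has in mind; the former would force Cartan elements to act skew-symmetrically, which is false. Your actual computation $(gu^\perp,u)=0$ goes through because $\mathfrak{gl}_k$ is stable under transpose, so simply replace the first displayed identity by the contravariant one and the proof is clean.
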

\proof
Let
$z_+\in \mathfrak{gl}_3$ be an element of the  algebra corresponding to a positive root.  If  $v_0\in V^*$ is the highest vector then
$$
0=(-z_{+}^tv_0,v)=(v_0,z_{+}v),
$$
i.e. $v_0$  is orthogonal to all vectors of type  $z_{+}v$.  But span of the vectors of type $z_{+}v$  is a span of all non-lowest Gelfand-Tsetlin  vectors. An orthogonal complement to this span is generated by the lowest vector. Hence  $v_0$  under the identification of  $V^*$  and   $V$  is mapped to the vector proportional to the lowest vector.  Considering elements  $z_+\in
\mathfrak{gl}_2\subset \mathfrak{gl}_3$ and $z_+\in
\mathfrak{gl}_1\subset \mathfrak{gl}_2\subset \mathfrak{gl}_3$,  one obtains that a base dual to the Gelfand-Tselin base is proportional to the Gelfand-Tselin base in $V^*$.
\endproof

Thus  $V^*$ is a representation with a highest vector proportional to
\begin{equation}
\label{aa3} a_{3}^{(m_1-m_2)}a_{2,3}^{m_2}.
\end{equation}

If one uses an ordinary realization of the contragradient representation then it is realized as a representation with the highest vector proportional to
\begin{equation*}a_{1,2}^{-m_2}a_{1,2,3}^{-m_1}.
\end{equation*}
Multiply this representation onto a representation with the highest vector ~$a_{1,2,3}^{m_1}$.
Then one obtains a representation with the highest weight  $[m_2,m_1-m_2,0]$ and the highest vector
\begin{equation}\label{s2}a_1^{m_2}a_{1,2}^{m_1-m_2}.
\end{equation}

Consider a mapping
\begin{equation}
\label{sopost} a_3\leftrightarrow a_{1,2},\quad a_1\leftrightarrow
a_{2,3},\quad a_2\leftrightarrow -a_{1,3},
\end{equation}
which a bijection between the space of the contragradient representation (i.e. a representation with the highest vector
\eqref{aa3}) into the space of a representation with highest vector \eqref{s2}.
This mapping conjugates the actions
$$
v\mapsto -E_{i,j}^tv\,\,\text{  и  }\,\,w\mapsto (m_1\delta_{i,j}+E_{i,j})w.
$$
\begin{prop}
	Under the mapping  \eqref{sopost}  the Gelfand-Tselin vector is mapped into a Gelfand-Tselin vector up to a sign
%	\begin{tiny}
		\begin{equation}
		\label{sopd}
		\begin{split}
		\!\begin{pmatrix}m_1 && m_2 &&0\\ &\!\! m_1\!-\!t_1 && \!\!m_2\!-\!t_2\\&& \!\!m_1\!-\!t_1\!-\!s
		\end{pmatrix}
		%\\
		\mapsto (\!-\!1)^{s+t_2}  \!\begin{pmatrix}m_1 &&\!\! m_1\!-\!m_2 &&0\\ &\!\! m_1\!-\!m_2\!+\!t_2 &&\!\! t_1\\&& t_1\!+\!s
		\end{pmatrix}.
		\end{split}
		\end{equation}
%	\end{tiny}
\end{prop}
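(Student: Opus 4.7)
The plan is to reduce the claim to a direct computation by using the explicit formula for Gelfand-Tsetlin vectors given by Theorem~\ref{vec3}. For the diagram on the left of \eqref{sopd}, Theorem~\ref{vec3} (with $k_1=m_1-t_1$, $k_2=m_2-t_2$, bottom entry $s_1=m_1-t_1-s$) expresses the corresponding vector as
\[
\frac{a_3^{t_1}}{t_1!}\,\frac{a_{1,2}^{m_2-t_2}}{(m_2-t_2)!}\,F_{\gamma,B}(a_1,a_2,a_{1,3},a_{2,3}),
\qquad
\gamma=(m_1-t_1-s-m_2,\ s,\ t_2,\ 0).
\]
Applying the bijection \eqref{sopost} swaps the monomial prefactor with the correct prefactor of the diagram on the right of \eqref{sopd}, and transforms the $\Gamma$-series into $F_{\gamma,B}(a_{2,3},-a_{1,3},-a_2,a_1)$. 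The goal is therefore to show
\[
F_{\gamma,B}(a_{2,3},-a_{1,3},-a_2,a_1)=(-1)^{s+t_2}\,F_{\gamma',B}(a_1,a_2,a_{1,3},a_{2,3})
\]
for the $\gamma'$ dictated by the image diagram.

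Next I would expand the left-hand side using the definition of the $\Gamma$-series over the lattice $B=\mathbb{Z}\langle(1,-1,-1,1)\rangle$: a summand indexed by $n\in\mathbb{Z}$ contributes
\[
\frac{a_{2,3}^{n+\gamma_1}(-a_{1,3})^{-n+\gamma_2}(-a_2)^{-n+\gamma_3}a_1^{n}}{\Gamma(n+\gamma_1+1)\Gamma(-n+\gamma_2+1)\Gamma(-n+\gamma_3+1)\Gamma(n+1)}.
\]
The two sign factors combine as $(-1)^{-2n+\gamma_2+\gamma_3}=(-1)^{\gamma_2+\gamma_3}$, which is exactly the expected $(-1)^{s+t_2}$ after substituting $\gamma_2=s$ and $\gamma_3=t_2$. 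The remainder is a series in $a_1,a_2,a_{1,3},a_{2,3}$ whose lattice of exponents is again $B$.

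The final step is to identify this series with $F_{\gamma',B}(a_1,a_2,a_{1,3},a_{2,3})$. Matching the exponents of $a_1$ and $a_{2,3}$ forces a reindexing $n'=n+\gamma_1$, and, insisting on the canonical representative with fourth coordinate $0$, yields
\[
\gamma'=(-\gamma_1,\ \gamma_1+\gamma_3,\ \gamma_1+\gamma_2,\ 0).
\]
Substituting $\gamma=(m_1-t_1-s-m_2,s,t_2,0)$ gives $\gamma'=(t_1+s-m_1+m_2,\ m_1-m_2+t_2-t_1-s,\ m_1-m_2-t_1,\ 0)$, which is precisely the tuple produced by Theorem~\ref{vec3} applied to the right-hand diagram of \eqref{sopd} (where the top row is $m_1,m_1-m_2,0$, the middle is $m_1-m_2+t_2,t_1$ and the bottom is $t_1+s$). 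Property~1 of the $\Gamma$-series guarantees that any other representative of $\gamma'$ modulo $B$ produces the same sum, so the identification is unambiguous.

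The computation itself is elementary; the only point requiring attention is bookkeeping, namely making sure that (i) the signs $(-1)^{-n+\gamma_2}$ and $(-1)^{-n+\gamma_3}$ are genuinely well defined for every term appearing in the finite sum (guaranteed by $\gamma_2,\gamma_3\in\mathbb{Z}_{\geq 0}$, i.e.\ $s,t_2\geq 0$), and (ii) that the shift of $\gamma$ needed to normalize $\gamma'_4=0$ indeed lies in the lattice $B$. Neither is a real obstruction, so the proposition follows directly from Theorem~\ref{vec3}.
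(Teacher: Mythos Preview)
Your proposal is correct and follows essentially the same line as the paper's own proof: apply Theorem~\ref{vec3} to the left-hand diagram, perform the substitution~\eqref{sopost}, extract the overall sign, and identify the resulting series with the function attached by Theorem~\ref{vec3} to the right-hand diagram. The only cosmetic difference is that the paper pulls out the sign $(-1)^{s+t_2}$ by passing through the $F_{2,1}$ representation of the $\Gamma$-series (property~3), whereas you obtain it directly from the term-by-term expansion over~$B$; both routes lead to the same reindexed parameter $\gamma'=(-\gamma_1,\gamma_1+\gamma_3,\gamma_1+\gamma_2,0)$.
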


\begin{proof}
	To the diagram on the left side of  \eqref{sopd},
	there corresponds a function
	$$\frac{a_3^{t_1}}{t_1!}\frac{a_{1,2}^{t_2}}{t_2!}F_{(m_1-m_2-t_1-s,s,t_2,0)}(a_1,a_2,a_{1,3},a_{2,3}).$$  Under the mapping   \eqref{sopost}  this function is transforms to
	\begin{align*}
	&\frac{a_{1,2}^{t_1}}{t_1!}\frac{a_{3}^{t_2}}{t_2!}\cdot F_{(m_1-m_2-t_1-s,s,t_2,0)}(a_{2,3},-a_{1,3},-a_2,a_{1}).
	\end{align*}
	Using a relation between  a $\Gamma$-series and a Gauss' hypergeometric functions  $F_{2,1}$,  one obtains
	\begin{align*}
	&const\cdot \frac{a_{1,2}^{t_1}}{t_1!}\frac{a_{3}^{t_2}}{t_2!} a_1^{m_1-m_2-t_2-s}(-a_2)^{s}(-a_{1,3})^{t_2}F_{2,1}\bigg(\dots;\frac{(-a_1)(-a_{2,3})}{a_2a_{1,3}}\bigg)
	\\
	&= (-1)^{s+t_2}\frac{a_{3}^{t_2}}{t_2!} \frac{a_{1,2}^{t_1}}{t_1!}F_{(m_1-m_2-t_1-s,s,t_2,0)}(a_{2,3},a_{1,3},a_2,a_{1})
	\\
	&=(-1)^{s+t_2}\frac{a_{3}^{t_2}}{t_2!} \frac{a_{1,2}^{t_1}}{t_1!}F_{(m_2-m_1+t_1+s,m_1-m_2-t_1-s+t_2,m_1-m_2-t_1,0)}(a_1,a_2,a_{1,3},a_{2,3}).
	\end{align*}
	This function corresponds to a diagram on the right side in  \eqref{sopd}.
\end{proof}

The  lowering operators  $\nabla_{3,1}$, $\nabla_{3,2}$,
$\nabla_{2,1}$, acting onto a diagram on the left are conjugated to the operators
$\nabla_{1,3}=-E_{1,3}+\frac{1}{-E_{1,1}-E_{2,2}+1}E_{2,3}E_{1,2}$,
$\nabla_{2,3}=E_{2,3}$, $\nabla_{1,2}=E_{1,2}$ acting on a diagram on the right. We call them the     ``raising operators''. Applying these operstors to the lowest vector one can  obtain a vector corresponding to an arbitrary diagram.

Note that under the mapping  \eqref{sopost} (and analogous mapping for   $b$)
one has
\begin{equation}
(aabb)_{2313}^{\theta}(abb)^{\phi}(aab)^{\psi}\mapsto (-1)^{\theta}
(ab)_{12}^{\theta}(aab)^{\phi}(abb)^{\psi}.
\end{equation}

According to previous considerations we come to the following conclusion.  Take a tensor product of representations with the highest weights  $[m_1,m_1-m_2,0]$ and  $[\bar{m}_1,\bar{m}_1-\bar{m}_2,0]$.  Split it into a sum of irreducibles.   Take a summand with the   {\it lowest  } vector of type $f(\theta,\varphi,\psi,0)$.  Apply the mapping   \eqref{sopost}  to all this construction.  We obtain a tensor product of representations with the highest weights  $[m_1,m_2,0]$ and  $[\bar{m}_1,\bar{m}_2,0]$. This representation is splitted into a sum of irreducibles.  A chosen lowest vector of type  $f(\theta,\varphi,\psi,0)$  is mapped to the highest veftor of type  $f(0,\psi,\varphi,\theta)$,  multiplied by $(-1)^{\theta}$.   Since the actions of the lowing and of the raising operators are conjugated we conclude that  the Gelfand-Tsetlin base vector in the chosen irreducible summand is mapped to a Gelfand-Tselin base vector in the corresponding irreducible summand up to a sign  $(-1)^{\theta+S+T_2}$.

\subsection{The Clebsh-Gordan coefficients} \label{cg2}
Consider a tensor product of representations with the highest weights   $[m_1,m_2,0]$ and
$[\bar{m}_1,\bar{m}_2,0]$,  split it into a sum  of irreducibles and take a summand with the  lowest vector  of type
$$
\dots(aabb)_{2,3,1,3}^{\theta}(aab)^{\varphi}(abb)^{\psi}.
$$

Take a vector in this representation corresponding to a diagram
\begin{equation}
\begin{split}
\label{Diag1}
\xymatrix{ M_{1}  && M_2  && 0\\
	&M_2+T_2 \ar[ur]_{T_2}    && T_1\ar[ur]_{T_2}  \\&&T_1+S \ar[ur]_{S}
}
\end{split}
\end{equation}

Fis a decomposition  \eqref{r1}. One has a Theorem.
\begin{theorem}
	\label{mnt21}
	Fix a $\mathfrak{gl}_3$-highest vector in $U\otimes V$ of type  $f(0,\varphi,\psi,\theta)$. Take a vector \eqref{Diag1}  in the corresponding representation $W^{\alpha}$.   It is written as a linear combination of tensor product of diagrams
%	\begin{tiny}
		\begin{align*}
		\begin{pmatrix}m_1 && m_2 &&0\\
		&m_2+T'''_1+T'_2-A && T'_1+A\\
		&&T'_1+S'+L_1 \end{pmatrix}
		\end{align*}
%	\end{tiny}
	and
%	\begin{tiny}
		\begin{align*}
		\begin{pmatrix}\bar{m}_1 && \bar{m}_2 && 0\\
		&\bar{m}_2+T''''_1+T''_2+B &&T''_1+B\\
		&& T''_1+S''+L_2\end{pmatrix}.
		\end{align*}
%	\end{tiny}
\end{theorem}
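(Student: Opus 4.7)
The plan is to deduce Theorem \ref{mnt21} from Theorem \ref{mnt11} via the identification \eqref{sopost} described in the preceding subsection. Recall that this substitution, applied factorwise to each $GL_3$-copy, carries the tensor product of representations with highest weights $[m_1,m_1-m_2,0]$, $[\bar{m}_1,\bar{m}_1-\bar{m}_2,0]$ onto (up to twisting by a determinant character) the tensor product of representations with highest weights $[m_1,m_2,0]$, $[\bar{m}_1,\bar{m}_2,0]$, intertwining raising operators on one side with lowering operators on the other. In particular, a highest vector of type $f(0,\psi,\varphi,\theta)$ on the ``Section~\ref{sl1} side'' is sent to (a scalar multiple of) a lowest vector of type $f(\theta,\varphi,\psi,0)$ on the ``Section~\ref{sl2} side'', and, by \eqref{sopd}, Gelfand-Tsetlin basis vectors are sent to Gelfand-Tsetlin basis vectors up to an overall sign $(-1)^{\theta+S+T_2}$.

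First, I would identify explicitly which Gelfand--Tsetlin diagram in the Section~\ref{sl1} setup corresponds to the diagram \eqref{Diag1} under \eqref{sopost}. Applying \eqref{sopd} on each of the two factors and tracking the parameters through the inversion of the middle and bottom rows in \eqref{Diag1}, one obtains a diagram of the form \eqref{Diag} with explicit $M_1, T_1, T_2, S$ (in the sense of Section~\ref{sl1}) determined by the original $T_1, T_2, S$ (in the sense of \eqref{Diag1}) and the shift $m_1-m_2 \leftrightarrow m_2$.

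Next, Theorem~\ref{mnt11} expresses this image diagram as a linear combination of tensor products of highest-vector-style diagrams for the two factors, parametrized by the partitions \eqref{r1}, \eqref{r2}, \eqref{r3}. I would then map this entire equality back through \eqref{sopost}: by applying \eqref{sopd} factorwise to each tensor summand on the right, each such highest-vector-style diagram gets carried to a lowest-vector-style diagram of precisely the form stated in Theorem~\ref{mnt21}, and the decomposition coefficients are the same as those in Theorem~\ref{mnt11}, adjusted by the product of the three signs $(-1)^{\theta+S+T_2}$ (from the input diagram) and the analogous signs from each tensor factor on the right.

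The main obstacle will be the bookkeeping: one must verify that the partitions \eqref{r1}, \eqref{r2}, \eqref{r3} match up consistently on both sides of \eqref{sopost}, so that the same symbols $T'_1, T''_1, T'''_1, T''''_1, T'_2, T''_2, S', S'', L_1, L_2, A, B$ appearing in Theorem~\ref{mnt21} can indeed be used without modification. This is essentially a combinatorial verification using \eqref{sopd} on each factor separately, together with the compatibility of the signs $(-1)^{\theta}$ arising from $(aabb)_{2,3,1,3}^{\theta} \mapsto (-1)^\theta (ab)_{1,2}^\theta$ with the signs $(-1)^{s+t_2}$ coming from each Gelfand-Tsetlin vector transformation. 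Once this matching is in place, the theorem follows directly from Theorem~\ref{mnt11} without any additional hypergeometric computation.
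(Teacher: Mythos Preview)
Your approach is essentially identical to the paper's: the paper derives Theorem~\ref{mnt21} precisely by applying the involution \eqref{sopost} factorwise, using \eqref{sopd} to transport Gelfand--Tsetlin vectors, and thereby reducing everything to Theorem~\ref{mnt11}, with the sign bookkeeping producing the statement of Theorem~\ref{mnt22}. One small correction: you have the labels reversed---the ``Section~\ref{sl1} side'' is the one with highest vectors of type $f(\omega,\varphi,\psi,0)$ (here $\omega=\theta$), while $f(0,\psi,\varphi,\theta)$ lives on the ``Section~\ref{sl2} side''; once this is straightened out your outline matches the paper exactly.
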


Fix decompositions  \eqref{r2}, \eqref{r3}. Then the following theorem takes place.
\begin{theorem}
	\label{mnt22}
	A copefficient at the product form the Theorem   \textup{\ref{mnt21}} equals to the sum of expressions  \eqref{coefcg} over partitions   \eqref{r2}, \eqref{r3}\textup, multiplied by the sign \textup:
	$$
	(-1)^{\theta+(T_2+S)+(T'''_1+T'_2+S'+L_1)+(T''''_1+T''_2+S''+L_2)}
	=(-1)^{T'''_1+T''''_1+T''_1+T''_2+\omega+\theta}.
	$$
\end{theorem}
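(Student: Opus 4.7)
The plan is to reduce Theorem~\ref{mnt22} to Theorem~\ref{mnt12} via the bijection \eqref{sopost} developed in \S\ref{sl2}, which identifies the contragradient representation with another representation whose highest vector has the form \eqref{s2}. The absolute value of the coefficient will come for free from Theorem~\ref{mnt12}; essentially all the work lies in tracking the sign.

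First I would apply the mapping \eqref{sopost} (together with its analogue in the $b$-variables) to the whole construction: the tensor product $U\otimes V$ of representations with highest weights $[m_1,m_2,0]$ and $[\bar m_1,\bar m_2,0]$ together with its irreducible decomposition. As explained in \S\ref{sl2}, a highest vector of the form $f(0,\varphi,\psi,\theta)$ in the summand $W^\alpha$ is the image, up to the sign $(-1)^\theta$, of a \textbf{lowest} vector of type $f(\theta,\varphi,\psi,0)$ in the tensor product of the representations on the opposite side of the bijection, whose irreducible components carry highest vectors of the type $f(\omega,\varphi,\psi,0)$ treated in \S\ref{sl1}. Under the same bijection, the lowering operators $\nabla_{3,1},\nabla_{3,2},\nabla_{2,1}$ on the $f(0,\varphi,\psi,\theta)$-side conjugate to the raising operators on the $f(\omega,\varphi,\psi,0)$-side, so the Clebsh-Gordan problem for Theorem~\ref{mnt22} becomes, up to an explicit sign, exactly the one solved by Theorems~\ref{mnt11}--\ref{mnt12}.

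Next I would apply the Proposition around \eqref{sopd} three times: once to the ambient diagram \eqref{Diag1} and once to each of the two Gelfand-Tsetlin diagrams in the tensor product appearing in Theorem~\ref{mnt21}. Each application contributes a sign $(-1)^{s+t_2}$ with the corresponding parameters of that diagram. Combined with the sign $(-1)^\theta$ from the identification of highest with lowest vector, one accumulates the total sign
\[
(-1)^{\theta+(T_2+S)+(T'''_1+T'_2+S'+L_1)+(T''''_1+T''_2+S''+L_2)}.
\]
The unsigned coefficient that multiplies it is, by the preceding reduction, precisely the expression \eqref{coefcg} summed over the partitions \eqref{r2}, \eqref{r3}, because the $\Gamma$-series and lowering-operator calculations of \S\ref{sl1} apply verbatim to the $f(\theta,\varphi,\psi,0)$-lowest-vector picture on the contragradient side.

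Finally I would simplify the exponent modulo $2$. Using $T_2=T'_2+T''_2$, $S=S'+S''$ from \eqref{r1} the terms $T'_2,T''_2,S',S''$ appear twice and drop out, leaving $\theta+T'''_1+T''''_1+L_1+L_2$. Invoking the relation $L_1+L_2=T''_1+T''_2+\omega$ from \eqref{r1} turns this into the claimed expression $T'''_1+T''''_1+T''_1+T''_2+\omega+\theta$. The only real obstacle is this bookkeeping: one must verify that the bijection \eqref{sopost} maps Gelfand-Tsetlin base vectors precisely, with the stated sign, to Gelfand-Tsetlin base vectors in the ``flipped'' representations of both tensor factors simultaneously, and that the partitions \eqref{r2}, \eqref{r3} on the two sides of the bijection correspond under the parameter dictionary of \eqref{sopd}. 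Once this is checked, the parity identity above is an immediate calculation, and the theorem follows.
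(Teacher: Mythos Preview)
Your proposal is correct and matches the paper's approach: the paper also reduces Theorem~\ref{mnt22} to Theorems~\ref{mnt11}--\ref{mnt12} via the contragradient bijection \eqref{sopost}, picking up $(-1)^\theta$ from the highest/lowest vector identification and $(-1)^{s+t_2}$-signs from \eqref{sopd} applied to the ambient diagram and the two tensor-factor diagrams, then simplifying the exponent using $T_2=T'_2+T''_2$, $S=S'+S''$ and $L_1+L_2=T''_1+T''_2+\omega$ from \eqref{r1}. The paper states these considerations in the paragraphs preceding Theorem~\ref{mnt21} rather than as a formal proof after Theorem~\ref{mnt22}, but the argument is the same.
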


Note that in the case $\theta=\omega=0$ the highest vector is both of the first and of the second type. But theorems  \ref{mnt11}, \ref{mnt12} and  \ref{mnt21}, \ref{mnt22}  give different answers since different approaches are used.

\section{The derivation of statements about  a $\Gamma$-series}
\label{pr}
\subsection{Statements associated with the homogeneity equation}

In the GKZ system  \eqref{gkzs}  the second, the third, the forth equations describe the homogeneity property of the function   $F_{\gamma}$. Using them let us prove the following equation.
\begin{prop}
	\label{f13}
	\begin{align*}
	&a_{1,3}F_{\gamma}=X_1F_{\gamma^1}+(a_{1}a_{2,3}-a_{2}a_{1,3})X_2F_{\gamma^2},
	\end{align*}
	where  $X_1,X_2$ ---  are some constants\textup,  $\gamma^1,$ $\gamma^2$ ---
	are new parameters of a  $\Gamma$-series.
\end{prop}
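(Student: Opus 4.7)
\medskip
\noindent\textbf{Proof plan.} The plan is to guess the correct shifts $\gamma^{1}$ and $\gamma^{2}$ by matching supports of $\Gamma$-series, and then verify the identity by comparing coefficients of individual monomials. Writing $b=n(1,-1,-1,1)$ for the generic lattice point, the series $a_{1,3}F_{\gamma}$ is supported on monomials of the form $z_{1}^{n+\gamma_{1}}z_{2}^{-n+\gamma_{2}}z_{3}^{-n+\gamma_{3}+1}z_{4}^{n}$, $n\in\mathbb{Z}$. The shifted series $F_{\gamma+e_{3}}$ has exactly the same support; and for $\gamma^{2}=\gamma-e_{2}$ the piece $-z_{2}z_{3}F_{\gamma^{2}}$ lies on this same support at index $n$, while $z_{1}z_{4}F_{\gamma^{2}}$ lies on it after the reindex $n\mapsto n-1$. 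This dictates the choice $\gamma^{1}=\gamma+e_{3}$, $\gamma^{2}=\gamma-e_{2}$.

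Next I would read off the coefficient of a generic monomial $z_{1}^{n+\gamma_{1}}z_{2}^{-n+\gamma_{2}}z_{3}^{-n+\gamma_{3}+1}z_{4}^{n}$ on each of the four contributions, using the explicit definition of $F_{\gamma,B}$. The four coefficients differ from the common denominator $D=\Gamma(n+\gamma_{1}+1)\Gamma(-n+\gamma_{2}+1)\Gamma(-n+\gamma_{3}+1)\Gamma(n+1)$ only by ratios of neighbouring Gamma values, so multiplying through by $D\cdot(-n+\gamma_{3}+1)$ turns the identity into
\begin{equation*}
-n+\gamma_{3}+1=X_{1}+X_{2}\bigl(n(n+\gamma_{1})-(-n+\gamma_{2})(-n+\gamma_{3}+1)\bigr).
\end{equation*}
The quadratic terms cancel on the right, leaving $X_{1}+X_{2}\bigl(n(\gamma_{1}+\gamma_{2}+\gamma_{3}+1)-\gamma_{2}(\gamma_{3}+1)\bigr)$. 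Matching the coefficient of $n$ and the constant term gives
\begin{equation*}
X_{2}=-\frac{1}{\gamma_{1}+\gamma_{2}+\gamma_{3}+1},\qquad X_{1}=\frac{(\gamma_{3}+1)(\gamma_{1}+\gamma_{3}+1)}{\gamma_{1}+\gamma_{2}+\gamma_{3}+1}.
\end{equation*}

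The main obstacle is the cancellation of the $n^{2}$ terms on the right: without it, no pair of scalars $X_{1},X_{2}$ could satisfy the identity for all $n$, and the proposition would fail. A more conceptual route, given Property 3, is to translate the claim into the classical Gauss contiguous relation $(c-a-b)F+a(1-w)F(a+)-(c-b)F(b-)=0$ for $F_{2,1}(-\gamma_{2},-\gamma_{3},\gamma_{1}+1;w)$ with $w=z_{1}z_{4}/(z_{2}z_{3})$; the apparent miracle is then just the classical three-term relation among contiguous hypergeometric functions, and the constants $X_{1},X_{2}$ can be read off from its coefficients together with the normalizations in Property 3.
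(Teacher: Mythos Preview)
Your argument is correct, and it also pins down the explicit values of $X_{1},X_{2}$ and of the shifts $\gamma^{1}=\gamma+e_{3}$, $\gamma^{2}=\gamma-e_{2}$, which the paper leaves implicit. The route, however, is genuinely different from the paper's. The paper does not touch individual coefficients of the $\Gamma$-series; instead it uses the homogeneity (Euler) equations from the GKZ system~\eqref{gkzs} as elementary ``contiguous relations'' of the form $a_{i}F_{\gamma}=-a_{j}F_{\gamma+e_{i}-e_{j}}+\mathrm{const}\cdot F_{\gamma+e_{i}}$, and by combining a few of these derives
\[
(a_{1}a_{2,3}-a_{2}a_{1,3})F_{\gamma}=\mathrm{const}\cdot a_{1,3}F_{\gamma+e_{2}}+\mathrm{const}\cdot F_{\gamma+e_{1}+e_{2,3}},
\]
from which the Proposition follows after a shift $\gamma\mapsto\gamma-e_{2}$ (note $F_{\gamma+e_{1}-e_{2}+e_{2,3}}=F_{\gamma+e_{3}}$). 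Your direct coefficient comparison is shorter and self-contained for this single identity, and the ``miraculous'' cancellation of the $n^{2}$ term is exactly what the GKZ structure guarantees; the paper's approach, on the other hand, is the one that generalizes uniformly to the companion Propositions for $a_{1}F_{\gamma}$, $a_{2}F_{\gamma}$, $a_{2,3}F_{\gamma}$ and to the Lemmas~\eqref{l1}--\eqref{l2} that follow. Your remark linking the identity to the classical three-term contiguous relation for $F_{2,1}$ via Property~3 is a nice conceptual gloss on why the cancellation had to occur.
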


\proof
Using the homogeneity equation from the GKZ system one can obtain:
\begin{align*}
&\begin{cases}a_1F_{\gamma}=-a_{1,3}F_{\gamma+e_1-e_{1,3}}+\const\cdot F_{\gamma+e_1},\\
a_{2,3}F_{\gamma}=-a_{1,3}F_{\gamma+e_{2,3}-e_{1,3}}+\const\cdot F_{\gamma+e_{2,3}} \end{cases}
\\&
\quad\Rightarrow a_{1}a_{2,3}F_{\gamma}=
a_{1,3}^2F_{\gamma+e_{2,3}-e_{1,3}+e_1-e_{1,3}}+\const\cdot
a_{1,3}F_{\gamma+e_1+e_{2,3}-e_{1,3}}
\\&\qquad+const\cdot
F_{\gamma+e_1+e_{2,3}}.
\end{align*}
Also
\begin{align*}
a_{2}F_{\gamma}&=-a_1F_{\gamma+e_2-e_1}+\const\cdot F_{\gamma+e_2}\quad
\\
&\Rightarrow a_{2}F_{\gamma}=a_{1,3}F_{\gamma+e_2-e_1+e_1-e_{1,3}}+\const\cdot
F_{\gamma+e_2-e_1+e_1}+\const\cdot F_{\gamma+e_2}
\\&
=a_{1,3}F_{\gamma+e_2-e_{1,3}}+\const\cdot
F_{\gamma+e_2}
\\&
\Rightarrow\,\,a_{2}a_{1,3}F_{\gamma}=a_{1,3}^2F_{\gamma+e_2-e_{1,3}}+\const\cdot
a_{1,3}F_{\gamma+e_2}.
\end{align*}
Let us use
$
F_{\gamma+e_{2,3}-e_{1,3}+e_1-e_{1,3}}=F_{\gamma+e_2-e_{1,3}},\quad F_{\gamma+e_1+e_{2,3}-e_{1,3}}=F_{\gamma+e_2}.
$

Hence,
\begin{align}
\begin{split}
\label{a13}
&(a_{1}a_{2,3}-a_{2}a_{1,3})F_{\gamma}=\const\cdot a_{1,3}F_{\gamma+e_2}+\const\cdot F_{\gamma+e_1+e_{2,3}}.
\end{split}
\end{align}
\endproof
Analogously the following propositions can be proved.
\begin{prop}
	\begin{align*}
	&a_{2}F_{\gamma}=X_1  F_{\gamma^1}
	+(a_{1}a_{2,3}-a_{2}a_{1,3})X_2F_{\gamma^2},
	\end{align*}
	where   $X_1,X_2$ ---  are some constants  \textup(other then in  \textup previous Proposition\textup).
\end{prop}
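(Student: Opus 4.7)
The plan is to mimic the derivation of the previous proposition, starting this time from the homogeneity equation tying $z_1$ and $z_2$ instead of the one tying $z_1$ and $z_3$. Namely, applying property 2 of $\Gamma$-series ($\partial_i F_\gamma=F_{\gamma-e_i}$) to the second equation of the GKZ system \eqref{gkzs} and then shifting $\gamma\mapsto\gamma+e_2$, I would first establish the ``one-step'' relation
\begin{equation*}
a_2F_\gamma=-a_1F_{\gamma+e_2-e_1}+\const\cdot F_{\gamma+e_2},
\end{equation*}
which is the $a_2$-analogue of the identity $a_1F_\gamma=-a_{1,3}F_{\gamma+e_1-e_{1,3}}+\const\cdot F_{\gamma+e_1}$ used in Proposition~\ref{f13}.

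Next, I would substitute into this the $a_1F_{\gamma+e_2-e_1}$ expansion already produced in the proof of Proposition~\ref{f13} (applied with $\gamma$ shifted by $e_2-e_1$); this replaces the factor $a_1$ by $-a_{1,3}F_{\gamma+e_2-e_{1,3}}+\const\cdot F_{\gamma+e_2}$, leaving me with an expression of the schematic form
\begin{equation*}
a_2F_\gamma=a_{1,3}F_{\gamma+e_2-e_{1,3}}+\const\cdot F_{\gamma+e_2}.
\end{equation*}

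The final step is to dispose of the leading factor $a_{1,3}$, which is exactly what formula \eqref{a13} in the proof of Proposition~\ref{f13} accomplishes: it expresses $a_{1,3}F_{\mu}$ as a combination of $F_{\mu+e_2+e_{1,3}-e_1-e_{2,3}}$-type terms and a term carrying $(a_1a_{2,3}-a_2a_{1,3})$. After this substitution with $\mu=\gamma+e_2-e_{1,3}$, I would use property 1 of a $\Gamma$-series together with the lattice identity $e_1-e_2-e_{1,3}+e_{2,3}\in B=\mathbb Z\langle(1,-1,-1,1)\rangle$ to identify the two resulting pure $\Gamma$-series terms (they both collapse to a single $F_{\gamma+e_2}$), and to isolate the $(a_1a_{2,3}-a_2a_{1,3})F_{\gamma-e_{1,3}}$ contribution. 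Setting $\gamma^1=\gamma+e_2$ and $\gamma^2=\gamma-e_{1,3}$ yields the claimed form; the explicit values of the constants $X_1,X_2$ are then read off from the factors $(\gamma_1+\gamma_2+1)$, etc., accumulated in the homogeneity rearrangements.

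The step I expect to require the most care is the last one: the $\Gamma$-series parameters in the two ``pure $F$'' remainders appear superficially different ($\gamma+e_2$ vs.\ $\gamma+e_1+e_{2,3}-e_{1,3}$), so I would need to invoke the lattice invariance property of $F_{\gamma,B}$ to identify them, and then keep careful bookkeeping of the prefactors of type $(\gamma_1+\gamma_2+1)$ coming from each homogeneity shift so that the final constants $X_1,X_2$ are unambiguous. All other steps are essentially algebraic manipulations already carried out in the preceding proof.
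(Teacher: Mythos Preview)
Your approach is correct and is exactly the ``analogous'' argument the paper has in mind: the intermediate identities $a_2F_\gamma=-a_1F_{\gamma+e_2-e_1}+\const\cdot F_{\gamma+e_2}$ and $a_2F_\gamma=a_{1,3}F_{\gamma+e_2-e_{1,3}}+\const\cdot F_{\gamma+e_2}$ are already displayed in the proof of Proposition~\ref{f13}, and combining the second with \eqref{a13} is precisely what remains. One small slip: the pure $\Gamma$-series term coming from \eqref{a13} applied to $a_{1,3}F_\mu$ has parameter $\mu+e_1+e_{2,3}-e_2$ (lattice-equivalent to $\mu+e_{1,3}$), not $\mu+e_2+e_{1,3}-e_1-e_{2,3}$ as you wrote; with $\mu=\gamma+e_2-e_{1,3}$ this indeed gives $F_{\gamma+e_2}$, so your final identification $\gamma^1=\gamma+e_2$, $\gamma^2=\gamma-e_{1,3}$ is right.
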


\begin{prop}
	\begin{align*}
	&a_{1}F_{\gamma,B}=X_1  F_{\gamma^1}
	+(a_{1}a_{2,3}-a_{2}a_{1,3})X_2F_{\gamma^2}.
	\end{align*}
\end{prop}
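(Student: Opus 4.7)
The plan is to mirror the derivation used for the first proposition of this subsection (the one expressing $a_{1,3} F_\gamma$) and, in fact, to use that very proposition as the main tool.

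First I would apply the third homogeneity equation of the GKZ system \eqref{gkzs}, namely $z_1 \partial_{z_1} F_\gamma + z_3 \partial_{z_3} F_\gamma = (\gamma_1 + \gamma_{1,3}) F_\gamma$, together with property~2) of $\Gamma$-series ($\partial_{z_i} F_\gamma = F_{\gamma - e_i}$) and the substitution $z_1 = a_1$, $z_3 = a_{1,3}$. After the shift $\gamma \mapsto \gamma + e_1$, this yields the identity
\begin{equation*}
a_1 F_\gamma \;=\; -\, a_{1,3}\, F_{\gamma + e_1 - e_{1,3}} \;+\; (\gamma_1 + \gamma_{1,3} + 1)\, F_{\gamma + e_1},
\end{equation*}
which already exhibits $a_1 F_\gamma$ as a combination of a term containing the unwanted factor $a_{1,3}$ and a pure $F$-term. (This is exactly the step that opens the proof of Proposition~\ref{f13}.)

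Second, I would eliminate the $a_{1,3}$ factor by invoking Proposition~\ref{f13} with $\gamma$ replaced by $\gamma + e_1 - e_{1,3}$. This substitutes $a_{1,3}F_{\gamma + e_1 - e_{1,3}}$ by an expression of the form $X_1' F_{\mu^1} + (a_1 a_{2,3} - a_2 a_{1,3}) X_2' F_{\mu^2}$ for explicit constants $X_1', X_2'$ and explicit parameters $\mu^1, \mu^2$. Plugged back in, this produces a sum of two pure $F$-terms and one term carrying the combination $(a_1 a_{2,3} - a_2 a_{1,3})$, which is precisely the desired structure apart from the appearance of two $F$-terms instead of one.

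Third, I would use the lattice invariance (property~1) $F_{\gamma + b} = F_\gamma$ for $b \in B = \mathbb{Z}\langle (1,-1,-1,1) \rangle$ to identify the two pure $F$-terms. A short calculation shows $\mu^1$ differs from $\gamma + e_1$ by an element of $B$, so $F_{\mu^1} = F_{\gamma + e_1}$, and the two summands collapse into a single contribution of the form $X_1 F_{\gamma^1}$ with $\gamma^1 = \gamma + e_1$, while the remaining term has $\gamma^2 = \gamma - e_{2,3}$ (again up to a lattice shift). The main obstacle, and the only delicate point, is precisely this bookkeeping: verifying that $\mu^1$ is $B$-equivalent to $\gamma + e_1$, and tracking how the constants $\gamma_1 + \gamma_{1,3} + 1$ and $X_1', X_2'$ combine into the final $X_1, X_2$. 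Everything else is a routine substitution, and the result follows in exactly the same style as the two preceding propositions.
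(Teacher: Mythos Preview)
Your approach is correct and matches the paper's intent: the paper simply remarks that this proposition is proved ``analogously'' to Proposition~\ref{f13}, and what you outline is exactly that analogy carried out, with the small shortcut of invoking Proposition~\ref{f13} itself rather than redoing its computation from scratch. Your bookkeeping is right---in particular $\mu^1$ is indeed $B$-equivalent to $\gamma+e_1$ (since the pure $F$-term in Proposition~\ref{f13} has parameter $B$-equivalent to $\gamma'+e_{1,3}$, and here $\gamma'=\gamma+e_1-e_{1,3}$), so the two pure terms do collapse---and nothing further is needed since the statement only asserts the existence of the constants $X_1,X_2$.
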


\begin{prop}
	\begin{align*}
	&a_{2}F_{\gamma}= +X_1F_{\gamma^1}+X_2 (a_{1}a_{2,3}-a_{2}a_{1,3})  F_{\gamma^2}.
	\end{align*}
\end{prop}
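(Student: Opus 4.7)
The plan is to reduce the statement to Proposition~\ref{f13} by first trading $a_2$ for $a_1$ and then $a_1$ for $a_{1,3}$, using the homogeneity equations from the GKZ system~\eqref{gkzs}. Combining the first and second homogeneity equations with the identity $\partial_i F_\gamma = F_{\gamma-e_i}$, I would derive the pairwise exchange relations
\begin{align*}
a_2 F_\gamma &= \const\cdot F_{\gamma+e_2} - a_1 F_{\gamma+e_2-e_1},\\
a_1 F_\gamma &= \const\cdot F_{\gamma+e_1} - a_{1,3} F_{\gamma+e_1-e_{1,3}}.
\end{align*}
Substituting the second (after shifting $\gamma$ by $e_2-e_1$) into the first eliminates $a_1$ at the price of introducing an $a_{1,3}$-factor:
\begin{equation*}
a_2 F_\gamma = \const\cdot F_{\gamma+e_2} + \const\cdot a_{1,3} F_{\gamma+e_2-e_{1,3}}.
\end{equation*}

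Next I would apply Proposition~\ref{f13} to the trailing term $a_{1,3} F_{\gamma+e_2-e_{1,3}}$. That proposition produces two summands: a pure $\Gamma$-series with parameter $(\gamma+e_2-e_{1,3})+e_{1,3} = \gamma+e_2$, and an $(a_1a_{2,3}-a_2a_{1,3})$-multiplied $\Gamma$-series with parameter $(\gamma+e_2-e_{1,3})-e_2 = \gamma-e_{1,3}$. The first of these has the same parameter as the $F_{\gamma+e_2}$ already present on the right-hand side, so the two merge into a single constant multiple of $F_{\gamma+e_2}$. The result is
\begin{equation*}
a_2 F_\gamma = X_1 F_{\gamma+e_2} + X_2(a_1a_{2,3}-a_2a_{1,3}) F_{\gamma-e_{1,3}},
\end{equation*}
which is the claimed form with $\gamma^1=\gamma+e_2$, $\gamma^2=\gamma-e_{1,3}$, and $X_1,X_2$ read off from the constants appearing in the two exchange relations and in Proposition~\ref{f13}.

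The only delicate point is the index-shift bookkeeping modulo $B=\mathbb{Z}\langle(1,-1,-1,1)\rangle$; the lattice invariance $F_{\gamma+b}=F_\gamma$ for $b\in B$ is exactly what ensures that the extraneous parameter from the substitution coincides with $\gamma+e_2$ and that only two summands survive. This is the same sort of combinatorial identification that drives the proof of Proposition~\ref{f13}, so beyond careful arithmetic it introduces no new obstacle.
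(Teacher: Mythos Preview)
Your proposal is correct and matches the paper's intended argument: the paper proves Proposition~\ref{f13} in detail and then dismisses the present proposition with ``Analogously the following propositions can be proved,'' and your write-up is a faithful realization of that analogy. In fact your key intermediate identity $a_2 F_\gamma=\const\cdot F_{\gamma+e_2}+a_{1,3}F_{\gamma+e_2-e_{1,3}}$ already appears verbatim inside the paper's proof of Proposition~\ref{f13}, so invoking that proposition on the $a_{1,3}$-term (and using the lattice shift $e_1-e_2-e_{1,3}+e_{2,3}\in B$ to merge the two $F_{\gamma+e_2}$ contributions) is exactly the expected step.
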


From these Propositions one gets a Lemma.
\begin{lemma}
	For arbitrary $u,v,w,t$  one has
	\begin{equation}
	\label{l1}
	a_1^{u}a_2^{v}a_{1,3}^{w}a_{2,3}^tF_{\gamma}
	=\sum_{\alpha}
	X_{\alpha}(a_2a_{1,3}-a_1a_{2,3})^{p_{\alpha}}F_{\gamma^{\alpha}}.
	\end{equation}
	
\end{lemma}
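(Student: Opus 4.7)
The plan is to establish \eqref{l1} by induction on the total degree $N := u+v+w+t$. For $N=0$ the identity is trivial with a single summand: $X_\alpha=1$, $p_\alpha=0$, $\gamma^\alpha=\gamma$. For the inductive step, factor out one variable $x \in \{a_1, a_2, a_{1,3}, a_{2,3}\}$ so that the remaining monomial has degree $N-1$. By the inductive hypothesis,
\[
a_1^{u'} a_2^{v'} a_{1,3}^{w'} a_{2,3}^{t'} F_\gamma = \sum_{\alpha} X_\alpha (a_2 a_{1,3} - a_1 a_{2,3})^{p_\alpha} F_{\gamma^\alpha}.
\]
Multiplying by $x$ and commuting it past the polynomial prefactor $(a_2 a_{1,3} - a_1 a_{2,3})^{p_\alpha}$, I am reduced to applying one of the four preceding Propositions to each individual product $x \cdot F_{\gamma^\alpha}$. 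Each such Proposition rewrites it in the form $X_1 F_{(\gamma^\alpha)^1} + X_2 (a_1 a_{2,3} - a_2 a_{1,3}) F_{(\gamma^\alpha)^2}$, which preserves the claimed shape with $p_\alpha$ increasing by at most one. Re-indexing the sum gives an expression of the required form.

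The preceding Propositions explicitly cover $x \in \{a_1, a_2, a_{1,3}\}$. The remaining case $x = a_{2,3}$ is obtained by invoking the involution of the GKZ data that swaps $(z_1,z_4)$ and $(z_2,z_3)$: this involution preserves the lattice $B = \mathbb{Z}\langle(1,-1,-1,1)\rangle$ and the system \eqref{gkzs}, and therefore transports the derivation of Proposition~\ref{f13} verbatim to a companion identity for $a_{2,3} F_\gamma$ of exactly the same shape. So all four elementary rules are available.

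The main potential difficulty is not conceptual but combinatorial: one must carefully track how the parameter vector $\gamma^\alpha$ and the exponent $p_\alpha$ evolve under iterated application of the four elementary rules, and confirm that after $N$ steps the sum has only finitely many nonzero terms with a bounded total power of $(a_2 a_{1,3} - a_1 a_{2,3})$. Both finiteness assertions follow at once from the fact that each individual step adds at most one to $p_\alpha$ and produces only a bounded number of children, so after $N$ iterations $p_\alpha \le N$ and the number of summands is bounded by $2^N$. This closes the induction and yields \eqref{l1}.
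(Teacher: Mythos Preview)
Your proof is correct and follows essentially the same approach as the paper, which simply states that the Lemma follows from the four preceding Propositions; your induction on the total degree $u+v+w+t$ is exactly the intended mechanism for iterating those single-variable identities. Your explicit handling of the case $x=a_{2,3}$ via the $(z_1,z_4)\leftrightarrow(z_2,z_3)$ symmetry is a useful addition, since the paper's list of Propositions contains an apparent typo (the fourth Proposition repeats $a_2$ rather than stating the analogous identity for $a_{2,3}$).
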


Since for   $\gamma=(0,0,0,0)$ one has  $F_{\gamma}\equiv 1$,  then the following statement takes place.
\begin{lemma}
	For arbitrary  $u,v,w,t$ one has
	\begin{equation}
	\label{l2}
	a_1^{u}a_2^{v}a_{1,3}^{w}a_{2,3}^t
	=\sum_{\alpha}
	X_{\alpha}(a_2a_{1,3}-a_1a_{2,3})^{p_{\alpha}}F_{\gamma^{\alpha}}.
	\end{equation}
\end{lemma}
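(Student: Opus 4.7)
The plan is to obtain this Lemma as an immediate corollary of the previous Lemma (equation \eqref{l1}) by specializing the parameter $\gamma$. The previous Lemma asserts that for any $\gamma$ and any non-negative integer exponents $u,v,w,t$, one has
\[
a_1^{u}a_2^{v}a_{1,3}^{w}a_{2,3}^tF_{\gamma}
=\sum_{\alpha} X_{\alpha}(a_2a_{1,3}-a_1a_{2,3})^{p_{\alpha}}F_{\gamma^{\alpha}}.
\]
So the entire strategy is to set $\gamma=(0,0,0,0)$ and check that $F_{(0,0,0,0),B}\equiv 1$, after which the left-hand side reduces to the pure monomial $a_1^{u}a_2^{v}a_{1,3}^{w}a_{2,3}^t$ and the right-hand side is exactly the sum claimed in \eqref{l2}.

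The verification that $F_{(0,0,0,0),B}\equiv 1$ is the only computational point. By definition,
\[
F_{(0,0,0,0),B}(z)=\sum_{k\in\mathbb{Z}}\frac{z^{k(1,-1,-1,1)}}{\Gamma(k+1)\Gamma(-k+1)\Gamma(-k+1)\Gamma(k+1)},
\]
and the reciprocal gamma factors vanish whenever any of $k+1$ or $-k+1$ is a non-positive integer. This forces $k=0$, and the single surviving term equals $1$. So the identity \eqref{l2} is literally the $\gamma=0$ instance of \eqref{l1}.

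The main conceptual check is that the derivation of the previous Lemma (which proceeds by iterating Proposition \ref{f13} and its three variants giving $a_iF_{\gamma}$ and $a_{i,j}F_{\gamma}$ as linear combinations involving $(a_2a_{1,3}-a_1a_{2,3})$-powers and shifted $F_{\gamma}$'s) goes through with no restriction on $\gamma$: those propositions rest on the homogeneity equations of the GKZ system \eqref{gkzs}, which hold at every value of $\gamma$, including $\gamma=0$. Consequently no step of the inductive unwinding breaks down at the base point $\gamma=0$.

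I do not expect any real obstacle here: the only thing one might worry about is whether some intermediate $F_{\gamma^{\alpha}}$ that appears in the iteration could trivialize or blow up in a way that spoils the form of the sum, but since the Lemma statement requires only the \emph{existence} of some decomposition of the stated shape (with unspecified $X_\alpha$, $p_\alpha$, $\gamma^\alpha$), such issues are harmless and automatically absorbed into the choice of the indexing set for $\alpha$.
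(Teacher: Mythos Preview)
Your proof is correct and matches the paper's approach exactly: the paper also derives this lemma as the special case $\gamma=(0,0,0,0)$ of \eqref{l1}, noting that $F_{(0,0,0,0)}\equiv 1$. Your additional verification of $F_{(0,0,0,0)}\equiv 1$ via the $\Gamma$-factors is more detailed than the paper's one-line remark but otherwise identical in spirit.
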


\subsection{Statement associated with the hypergeometric operator}
Let us introduce a notation for the first operator from \eqref{gkzs}:
\begin{equation}
O:=\frac{\partial^2}{\partial a_1\partial
	a_{2,3}}-\frac{\partial^2}{\partial a_2\partial
	a_{1,3}}.
\end{equation}

\subsubsection{Auxiliary statements}
\begin{prop}
	\label{pre1}
	\begin{equation}
	O\big( (a_1a_{2,3}\!-\!a_{2}a_{1,3})^kF_{\gamma}
	\big)\!=\!(k(k\!+\!1)\!+\!k(\gamma_1\!+\!\gamma_2\!+\!\gamma_{1,3}))(a_1a_{2,3}\!-\!a_{2}a_{1,3})^{k\!-\!1}\!\cdot\!
	F_{\gamma}.
	\end{equation}
\end{prop}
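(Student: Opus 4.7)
The plan is to expand $O\bigl(P^k F_\gamma\bigr)$ by the Leibniz rule, where $P := a_1 a_{2,3} - a_2 a_{1,3}$, and then collapse the result using the four GKZ equations of \eqref{gkzs}. A useful preliminary observation is that the four first derivatives $\partial_{a_1}P = a_{2,3}$, $\partial_{a_{2,3}}P = a_1$, $\partial_{a_2}P = -a_{1,3}$, $\partial_{a_{1,3}}P = -a_2$ satisfy the algebraic identity $(\partial_{a_1}P)(\partial_{a_{2,3}}P) - (\partial_{a_2}P)(\partial_{a_{1,3}}P) = a_1 a_{2,3} - a_2 a_{1,3} = P$, so the ``pure $P$'' piece of the second derivative will lose one power of $P$ automatically.

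The Leibniz expansion produces three qualitatively different kinds of summand. First, the term in which both derivatives land on $F_\gamma$ equals $P^k \cdot OF_\gamma$, which vanishes by the first equation of \eqref{gkzs}. Second, the terms in which both derivatives land on $P^k$ assemble into $\bigl[\partial_{a_1}\partial_{a_{2,3}} - \partial_{a_2}\partial_{a_{1,3}}\bigr](P^k)\cdot F_\gamma$; using the preliminary identity a short direct computation gives $\bigl[k(k-1)P^{k-2}\cdot P + 2kP^{k-1}\bigr]F_\gamma = k(k+1)P^{k-1}F_\gamma$. Third, the mixed terms in which one derivative lands on each factor collect into $kP^{k-1}\bigl[a_1\partial_{a_1}F_\gamma + a_{2,3}\partial_{a_{2,3}}F_\gamma + a_2\partial_{a_2}F_\gamma + a_{1,3}\partial_{a_{1,3}}F_\gamma\bigr]$, where the minus signs in $\partial_{a_2}P$ and $\partial_{a_{1,3}}P$ cancel against the minus sign in $O$ to produce the symmetric Euler combination.

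The last step is to evaluate this Euler combination using the three homogeneity equations in \eqref{gkzs} (with $\gamma_4 = 0$, as fixed in property~2): from them, $(a_1\partial_{a_1} + a_2\partial_{a_2})F_\gamma = (\gamma_1+\gamma_2)F_\gamma$, $a_{1,3}\partial_{a_{1,3}}F_\gamma = (\gamma_1+\gamma_{1,3})F_\gamma - a_1\partial_{a_1}F_\gamma$, and $a_{2,3}\partial_{a_{2,3}}F_\gamma = a_1\partial_{a_1}F_\gamma - \gamma_1 F_\gamma$; on adding the three, the $a_1\partial_{a_1}F_\gamma$ terms telescope and the constants combine to $(\gamma_1+\gamma_2+\gamma_{1,3})F_\gamma$. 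Collecting the three contributions then yields $O(P^k F_\gamma) = \bigl[k(k+1) + k(\gamma_1+\gamma_2+\gamma_{1,3})\bigr]P^{k-1}F_\gamma$, as claimed. The argument is essentially mechanical; the only real obstacle is sign bookkeeping in the mixed-term step, where one must carefully track that the subtractive structure of $O$ combines with the minus signs in $\partial_{a_2}P$ and $\partial_{a_{1,3}}P$ to produce an all-positive Euler sum rather than a cancellation.
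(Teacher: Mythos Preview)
Your proof is correct and follows essentially the same route as the paper: Leibniz-expand $O(P^kF_\gamma)$ into the pure-$P^k$ term (giving $k(k+1)P^{k-1}$), the pure-$F_\gamma$ term (vanishing by the first GKZ equation), and the cross terms (assembling into $kP^{k-1}$ times the Euler operator, evaluated via the homogeneity equations to $(\gamma_1+\gamma_2+\gamma_{1,3})F_\gamma$). The only trivial slip is the citation ``as fixed in property~2'': the convention $\gamma_4=0$ is declared in the text between properties~2) and~3), not in property~2) itself.
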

\proof
One has
\begin{equation}
\label{oaa} O\big( (a_1a_{2,3}-a_{2}a_{1,3})^k\big)=
k(k+1)(a_1a_{2,3}-a_{2}a_{1,3})^{k-1}.
\end{equation}
Now let us find
\begin{align*}
&\Big( \frac{\partial}{\partial a_1}(a_1a_{2,3}-a_{2}a_{1,3})^k\Big)
\frac{\partial }{\partial a_{2,3}}F_{\gamma}+\Big(
\frac{\partial}{\partial a_{2,3}}(a_1a_{2,3}-a_{2}a_{1,3})^k\Big)
\frac{\partial }{\partial a_{1}}F_{\gamma}
\\
&-\Big( \frac{\partial}{\partial
	a_2}(a_1a_{2,3}-a_{2}a_{1,3})^k\Big) \frac{\partial }{\partial
	a_{1,3}}F_{\gamma}-\Big( \frac{\partial}{\partial
	a_{1,3}}(a_1a_{2,3}-a_{2}a_{1,3})^k\Big) \frac{\partial }{\partial
	a_{2}}F_{\gamma}
\\
&=k(a_1a_{2,3}-a_{2}a_{1,3})^{k-1}\Big( a_1\frac{\partial }{\partial
	a_{1}}+a_2\frac{\partial }{\partial a_{2}}+a_{1,3}\frac{\partial
}{\partial a_{1,3}}+a_{2,3}\frac{\partial }{\partial a_{2,3}}
\Big)F_{\gamma}
\\
&=
k(a_1a_{2,3}-a_{2}a_{1,3})^{k-1}(\gamma_1+\gamma_2+\gamma_{1,3})F_{\gamma}.
\end{align*}

Using  $O F_{\gamma}=0$,  one gets
\begin{equation*}
\begin{split}
O\big( (a_1a_{2,3}&-a_{2}a_{1,3})^kF_{\gamma}
\big)
\\
&=(k(k+1)+k(\gamma_1+\gamma_2+\gamma_{1,3}))(a_1a_{2,3}-a_{2}a_{1,3})^{k-1}F_{\gamma}.\qedhere
\end{split}
\end{equation*}
\endproof

\subsubsection{The scheme of derivation of the main statements}
We are going to use the following idea. If one knows that a function  $f(z)$
can be presented as a power series then the coefficients of this series can be calculated by the following ruler
\begin{equation}
\label{fz} f(z)=c_0+c_1z+c_2z^2+c_3z^3\dots\,\,\Rightarrow\,\,
c_0=f(0),\quad
c_1=\frac{d}{dz}f(z)\mid_{z=0},\quad \dots\,.%c_m=\frac{1}{m!}(\frac{d}{dz})^mf(z)\mid_{z=0}\dots
\end{equation}
Let us use the ruler  \eqref{fz}, where analogs of  $\frac{d}{dz}$ and  $\mid_{z=0}$ are the following
\begin{equation*}
\frac{d}{dz}\mapsto O=\frac{\partial^2}{\partial a_1\partial
	a_{2,3}}-\frac{\partial^2}{\partial a_2\partial a_{1,3}},
\quad .\mid_{z=0}\mapsto.\mid_{a_1a_{2,3}=a_2a_{1,3}}.
\end{equation*}

\subsubsection{The first main statement}
The fact that the relation   \eqref{rel1}  takes place follows from \eqref{l1}.
Let us prove the statement \ref{py},  which gives a formula for the coefficients of this relation.

Let us find terms with $p_{\tau}=p$.  For this purpose let us apply to both parts of
\eqref{rel1}  the operator   $O^p$, and then let us make a substitution
$.\mid_{a_{1}a_{2,3}=a_{2}a_{1,3}}$.

\noindent{\bf 1)} On the left one has
\begin{equation}
O^p(a_1^uF_{\gamma})=ua_1^{u-1}F_{\gamma-e_{2,3}}=\frac{\Gamma(u)}{\Gamma(u-p)}a_1^{u-p}F_{\gamma+pe_1-pe_2-pe_{1,3}}.
\end{equation}
After the substitution  $.\mid_{a_{1}a_{2,3}-a_{2}a_{1,3}}$ one obtaines
\begin{equation}
\frac{\Gamma(u)}{\Gamma(u-p)}a_1^{u+\gamma_1}a_2^{\gamma_2-p}a_{1,3}^{\gamma_{1,3}-p}F_{\gamma+pe_1-pe_2-pe_{1,3}}({\bf
	1}).
\end{equation}

\noindent{\bf 2)} On the right after application of  $O$  and the substitution
$.\mid_{a_{1}a_{2,3}=a_{2}a_{1,3}}$  there remains only the summands
\eqref{rel1} for which $p_{\tau}=p$. According to the Proposition   \ref{pre1}  at every summand there appeares a coefficient
$$
\Pi_{p,\gamma^{\tau}}=\prod_{t=1}^p\big(t^2+t(\gamma^{\tau}_1+\gamma^{\tau}_2+\gamma^{\tau}_{1,3}+\gamma^{\tau}_{2,3})\big).
$$

Hence in \eqref{rel1}  for terms with $p_{\tau}=p$  one  has
\begin{align*}
&\gamma^{\tau}_1=\gamma_1+u,\quad \gamma^{\tau}_2=\gamma_2-p,\\
&\gamma^{\tau}_{1,3}=\gamma_{1,3}-p,\quad \gamma^{\tau}_{2,3}=0,\\
&Y_{\gamma^{\tau}}=\frac{\Gamma(u)}{\Gamma(u-k)}\cdot\frac{F_{\gamma}({\bf
		1})}{F_{\gamma^{\tau}}({\bf 1})}\cdot
\frac{1}{\Pi_{p,\gamma^{\tau}}}.
\end{align*}

\subsubsection{The second main statement}

The fact that the relation  \eqref{rel2} takes place follows from
\eqref{l2}. Let us prove the Proposition \ref{px}.

Let us  use the principle  \eqref{fz}.  We need to apply the oparators
$O_a^{p}O_b^q$, where
\begin{align*}
O_a=\frac{\partial^2}{\partial a_1\partial
	a_{2,3}}-\frac{\partial^2}{\partial a_2\partial
	a_{1,3}},\quad O_b=\frac{\partial^2}{\partial b_1\partial
	b_{2,3}}-\frac{\partial^2}{\partial b_2\partial b_{1,3}}.
\end{align*}

To find summands with  $p_{\tau}=p,q_{\tau}=q$, let us apply $O_a^pO_b^q$ and make substitutions
$.\mid_{a_{1}a_{2,3}=a_{2}a_{1,3},\,\,b_{1}b_{2,3}=b_{2}b_{1,3}}$ on the left and on the right.

On the right in  \eqref{rel2} we obtain summands with
$p_{\tau}=p,q_{\tau}=q$, transformed to
\begin{align}
\begin{split}
\label{pravo}
&\Pi_{p,\theta^{\tau}}\cdot \Pi_{p,\vartheta^{\tau}}\cdot
X_{\tau}a_3^{u}a_{1,2}^{v}
b_{3}^{g}b_{1,2}^{h}
\cdot a_1^{\theta_1}a_2^{\theta_2}
a_{1,3}^{\theta_{1,3}}F_{\theta}({\bf 1})\cdot
b_1^{\vartheta_1}b_2^{\vartheta_2}b_{1,3}^{\vartheta_{1,3}} G_{\vartheta}({\bf 1}).
\end{split}
\end{align}
Consider the left side. Since $O_a^p$ and  $O_b^q$  commute we can apply them in an arbitrary order. Introduce a notation
\begin{equation}
\label{fpsi}
f(\lambda,\mu,\omega):=\frac{(abb)^{\lambda}}{\lambda!}\frac{(aab)^{\mu}}{\mu!}\frac{(ab)_{1,2}^{\omega}}{\omega!}.
\end{equation}
This expression equals to
\begin{align}
\begin{split}
\label{abbbba}
&
\frac{(a_1b_{2,3}-a_2b_{1,3}+a_3b_{1,2})^{\lambda}}{\lambda!}
\frac{(b_1a_{2,3}-b_2a_{1,3}+b_3a_{1,2})^{\mu}}{\mu!}\frac{(a_1b_2-a_2b_1)^{\omega}}{\omega!}
\\
&=
\sum_{}
a_1^{\varphi_1+\omega_1}a_2^{\varphi_2+\omega_2}a_3^{\varphi_3}\cdot
a_{1,2}^{\psi_3}a_{1,3}^{\psi_2}
a_{2,3}^{\psi_1} \cdot
b_1^{\psi_1+\omega_2}b_2^{\psi_2+\omega_1}b_3^{\psi_3}\cdot
b_{1,2}^{\varphi_3}b_{1,3}^{\varphi_2}b_{2,3}^{\varphi_1}
\\
&\qquad\qquad\times
\frac{(-1)^{\varphi_2+\psi_2+\omega_2}}{\varphi_1!\varphi_2!\varphi_3!\psi_1!\psi_2!\psi_3!\omega_1!\omega_2!}.
\end{split}
\end{align}
The summation is taken over all indices  $\varphi_i,\psi_i,\omega_i$,
such that
$$
\begin{cases}\varphi_1+\varphi_2+\varphi_3=\lambda,\\
\psi_1+\psi_2+\psi_3=\mu,\\ \omega_1+\omega_2=\omega.
\end{cases}
$$
Using   \eqref{abbbba}, we obtain that application of  $O_b$ to
\eqref{fpsi}  gives
\begin{multline*}
a_{1}a_{2,3}f(\lambda-1,\mu-1,\omega)
+a_1(-a_2)f(\lambda-1,\mu,\omega-1)
\\
-(-a_{2})(-a_{1,3})f(\lambda-1,\mu-1,\omega)
-(-a_2)a_1f(\lambda-1,\mu,\omega-1)
\\
=(a_1a_{2,3}-a_2a_{1,3})f(\lambda-1,\mu-1,\omega).
\end{multline*}
Thus an application of  $O_b^q$ to  \eqref{fpsi} gives 
\begin{align}
\label{oar} &(a_1a_{2,3}-a_2a_{1,3})^qf(\lambda-q,\mu-q,\omega).
\end{align}
Now apply  $O_a$  to this expression. Using   \eqref{oaa}, one obtains that
$O_a((a_1a_{2,3}-a_2a_{1,3})^qf(\lambda,\mu,\omega))$ equals
\begin{equation}
\label{obr}
\begin{split}
q^2(a_1a_{2,3}&-a_2a_{1,3})^{q-1}f(\lambda,\mu,\omega)
\\
&\quad+(b_2b_{1,3}-b_1b_{2,3})(a_1a_{2,3}-a_2a_{1,3})^{q}f(\lambda-1,\mu-1,\omega)
\\
&\quad+q(a_1a_{2,3}-a_2a_{1,3})^{q-1}\Big (
a_{2,3}b_1f(\lambda,\mu-1,\omega)
\\
&\quad+a_{1}b_{2,3}f(\lambda-1,\mu,\omega)
+a_{1}b_2f(\lambda,\mu,\omega-1)
\\
&\quad-(-a_{1,3})(-b_2)f(\lambda,\mu-1,\omega)
\\
&\quad-(-a_{2})(-b_{1,3})f(\lambda-1,\mu,\omega)
-(-a_{2})(-b_1)f(\lambda,\mu,\omega-1) \Big)
\\
&=q^2(a_1a_{2,3}-a_2a_{1,3})^{q-1}f(\lambda,\mu,\omega)
\\
&\quad+(b_2b_{1,3}-b_1b_{2,3})(a_1a_{2,3}-a_2a_{1,3})^{q}f(\lambda-1,\mu-1,\omega)
\\
&\quad+q(a_1a_{2,3}-a_2a_{1,3})^{q-1}\Big (((aab)-a_{1,2}b_3)f(\lambda,\mu-1,\omega)
\\
&\quad+((abb)-(a_3b_{1,2}))f(\lambda-1,\mu,\omega)-
(ab)f(\lambda,\mu,\omega-1) \Big)
\\
&=q(a_1a_{2,3}-a_2a_{1,3})^{q-1}\Big((q+\lambda+\mu+\omega)f(\lambda,\mu,\omega)
\\
&\quad-a_{1,2}b_3f(\lambda,\mu-1,\omega)-b_{1,2}a_3f(\lambda-1,\mu,\omega)
\Big)
\\
&\quad+
(b_2b_{1,3}-b_1b_{2,3})(a_1a_{2,3}-a_2a_{1,3})^{q}f(\lambda-1,\mu-1,\omega).
\end{split}
\end{equation}
Hence in the case  $p\!<\!q$ after the substitution
$.\mid_{a_{1}a_{2,3}=a_{2}a_{1,3},\,\,b_{1}b_{2,3}=b_{2}b_{1,3}}$ one obtains ~$0$.  Since    $O_a^p $  and  $O_b^q$ commute,  then in the case  $p>q$ one also obtains~$0$.

Now  put  $p=q$.  Then instead of  $\lambda$ and  $\mu$ take
$\lambda-q$ and   $\mu-q$ as in   \eqref{oar}.  According to \eqref{obr}
one obtaines
\begin{equation}
\label{levo1}
q!\!\!\!\sum_{q_1\!+q_2\!+q_3\!=q}\!\!\!(-1)^{q_2+q_3}f(\lambda\!-\!q\!-\!q_2,\mu\!-\!q\!-\!q_3,\omega)
(a_3b_{1,2})^{q_2}(b_3a_{1,2})^{q_3} h^{\lambda,\mu,\omega}_{q_1,q_2,q_3}\!\!+\dots,
\end{equation}
where
\begin{align}
\begin{split}
\label{defh} h^{\lambda,\mu,\omega}_{q_1,q_2,q_3}=\sum_{\substack{\{1,\dots,q\}=
		I_1\sqcup I_2\sqcup I_3,\\ |I_j|=q_j}}&\,\prod_{j\notin I_2\sqcup
	I_3}\!\!\Big((q-j)
\\
&\!+\!(\lambda\!-\!\{\text{ the number of $i_s\!\in\! I_2$, such that
	$i_s\!<\!j$}\})
\\
&\!+\!(\mu\!-\!\{\text{the number of  $i_s\!\in\! I_3$, such that
	$i_s\!<\!j$}\})\!+\!\omega\Big).
\end{split}
\end{align}
Here $\dots$ in   \eqref{levo1}   is a sum of terms that vanish after the substitution
$.\mid_{a_{1}a_{2,3}=a_{2}a_{1,3},\,\,b_{1}b_{2,3}=b_{2}b_{1,3}}$.
After this substitution one gets
\begin{align}
\begin{split}
\label{levo2}
\sum_{}
&a_1^{\varphi_1+\omega_1-\psi_1}a_2^{\varphi_2+\omega_2+\psi_1}a_3^{\varphi_3+q_2}\cdot
a_{1,2}^{\psi_3+q_3}a_{1,3}^{\psi_2+\psi_1}
\\&
\times
b_1^{\psi_1+\omega_2-\varphi_1}b_2^{\psi_2+\omega_1+\varphi_1}b_3^{\psi_3+q_3}\cdot
b_{1,2}^{\varphi_3+q_2}b_{1,3}^{\varphi_2+\varphi_1}
\\
&\times \frac{q! \cdot
	(-1)^{q_2+q_3}\cdot h^{\varphi,\psi,\omega}_{q_1,q_2,q_3}\cdot (-1)^{\varphi_2+\psi_2+\omega_2}}{\varphi_1!
	\varphi_2!\varphi_3!\psi_1!\psi_2!\psi_3!\omega_1!\omega_2!}.
\end{split}
\end{align}
Compare  \eqref{pravo} and  \eqref{levo1}, \eqref{levo2}.
We conclude that in    \eqref{rel2} one has  ${p_{\tau}\!=\!q_{\tau}\!=\!q}$,
and for these summands
\begin{align*}
\begin{cases}
u=\varphi_3+q_2,\quad  v=\psi_3+q_3,\quad \
g=\psi_3+q_3,\quad  h=\varphi_3+q_2,%\\ &
\\
\theta_1=\varphi_1+\omega_1-\psi_1,\quad
\theta_2=\varphi_2+\omega_2+\psi_1,\quad \theta_{1,3}=\psi_2+\psi_1,\quad \theta_{2,3}=0,%\\ &
\\
\vartheta_1=\psi_1+\omega_2-\varphi_1,\quad \vartheta_2=\psi_2+\omega_1+\varphi_1,\quad \vartheta_{1,3}=\varphi_2+\varphi_1,\quad \vartheta_{2,3}=0,%\\&
\\
X_{\tau}=\frac{(-1)^{\varphi_2+\psi_2+\omega_2+q_2+q_3}}{\Pi_{q,\theta^{\tau}}\Pi_{q,\vartheta^{\tau}}}\cdot
\frac{q!}{\varphi_1!\varphi_2!\varphi_3!\psi_1!\psi_2!\psi_3!\omega_1!\omega_2!}\frac{h^{\Phi,\Psi,\omega}_{q_1,q_2,q_3}}{F_{\theta}
	({\bf 1})F_{\vartheta} ({\bf 1})},
\end{cases}
\end{align*}
where $  h^{\lambda,\mu,\omega}_{q_1,q_2,q_3}$ was defined in \eqref{defh}.

The summation is taken over all indices such that
$$
\begin{cases}
q=q_1+q_2+q_3,\\
\varphi_1+\varphi_2+\phi_3=\lambda-q_2-q,
\\
\psi_1+\psi_2+\psi_3=\mu-q_3-q,
\\ \omega_1+\omega_2=\omega.
\end{cases}
$$

\subsubsection{The third main statement}
We need a formula for the action of  $\frac{E_{3,2}^n}{n!}=
\sum_{n_1+n_2=n}a^{n_1}_3 a_{1,3}^{n_2}\frac{\partial^{n_1}}{\partial^{n_1} a_{2}}\frac{\partial^{n_2}}{\partial^{n_2} a_{1,2}}$ onto a function  $\frac{a_3^{m_1-k_1}}{(m_1-k_1)!}\frac{a_{1,2}^{k_2}}{k_2!}F_{\gamma}$,  associated with a Gelfand-Tselin diagram.
\begin{equation}
\label{etd}
\frac{E_{3,2}^n}{n!}\frac{a_3^{m_1-k_1}}{(m_1-k_1)!}\frac{a_{1,2}^{k_2}}{k_2!}F_{\gamma}=\sum_{n=n_1+n_2}\frac{a_3^{m_1-k_1+n_1}}{(m_1-k_1)!n_1!}\frac{a_{1,2}^{k_2-n_2}}{(k_2-n_2)!}
\frac{a_{1,3}^{n_2}}{n_2!}F_{\gamma-n_1e_2}.
\end{equation}
According to   \eqref{l1}, one has
\begin{align}
\begin{split}
\label{etd1}
\sum_{n=n_1+n_2}&\frac{a_3^{m_1-k_1+n_1}}{(m_1-k_1)!n_1!}\frac{a_{1,2}^{k_2-n_2}}{(k_2-n_2)!}\frac{a_{1,3}^{n_2}}{n_2!}F_{\gamma-n_1e_2}
\\&
=\sum_{\tau}
Z_{\tau
}a_3^{i_{\tau}}a_{1,2}^{j_{\tau}}(a_{1}a_{2,3}-a_{2}a_{1,3})^{r_{\tau}}F_{\varepsilon^{\tau}}.
\end{split}
\end{align}

Thus the relation  \eqref{rel3}  takes place.  Let us prove the Proposition  \ref{pz},  that gives formulas for coefficients in \eqref{rel3}.

Let us use the  ruler   \eqref{fz}. Apply the operator  $O^r$  to the left side of  \eqref{etd1}.
Using  $OF_{\gamma}=0$,
one gets
$$
\frac{a_3^{m_1-k_1+n_1}}{(m_1-k_1)!n_1!}
\frac{a_{1,2}^{k_2-n_2}}{(k_2-n_2)!}\frac{a_{1,3}^{n_2-r}}{(n_2-r)!}F_{\gamma-(n_1+r)e_2}.
$$
Apply  $O^r$  to the right side of  \eqref{etd1}, one gets
$$
\sum_{r_{\tau}=r}\Pi_{r.\varepsilon^{\tau}}\cdot Z_{\tau }\cdot a_3^{i_{\tau}}a_{1,2}^{j_{\tau}}F_{\varepsilon^{\tau}}+\dots,
$$
where $ \dots $  iis a sum of terms that vanish after the substitution   $.\mid_{a_{1}a_{2,3}=a_{2}a_{1,3}}$.

For summands with  $r_{\tau}=r$ one has
\begin{align*}
i_{\tau}&=m_1-k_1+n_1,\quad j_{\tau}=k_2-n_2,\\
\varepsilon_1^{\tau}&=\gamma_1,\quad
\varepsilon_2^{\tau}=\gamma_2-n_1-r,\\
\varepsilon_{1,3}^{\tau}&=\gamma_{1,3}+n_2-r,\quad
\varepsilon_{2,3}^{\tau}=0,\\
&Z_{\tau }=\frac{1}{\Pi_{r,\epsilon^{\tau}}}
\frac{F_{\gamma-n_2e_2}({\bf 1})}{F_{\varepsilon^{\alpha}}({\bf
		1})}\cdot \frac{1}{(m_1-k_1)!n_1!(k_2-n_2)!n_2!}.
\end{align*}

\end{document}